\documentclass[11pt,reqno]{amsart}

\usepackage{amsmath, amsfonts, amsthm, amssymb, stmaryrd, color, cite}

\textwidth=15.0cm \textheight=21.0cm \hoffset=-1.1cm \voffset=-0.5cm
\usepackage{hyperref}
\newtheorem{theorem}{Theorem}[section]
\newtheorem{lemma}{Lemma}[section]
\newtheorem{proposition}{Proposition}[section]

\theoremstyle{definition}
\newtheorem{definition}{Definition}[section]

\theoremstyle{remark}

\numberwithin{equation}{section}
\allowdisplaybreaks

\def\f{\frac}

\def\hf1{^\f{1}{1-\xi^2}}

\def\be{\begin{equation}}
\def\en{\end{equation}}
\def\bs{\begin{split}}
\def\es{\end{split}}
\def\ba{\begin{align}}
\def\ea{\end{align}}

\renewcommand{\v}{{\bf v}}

\allowdisplaybreaks

\author[Z. Qiu]{Zhaoyang Qiu}
\address{School of Mathematics and Statistics, Huazhong University of Science and Technology, Wuhan, 430074, China.}
\email{ZHQMATH@163.com}

\author[Y. Tang]{Yanbin Tang}
\address{School of Mathematics and Statistics, Huazhong University of Science and Technology, Wuhan, 430074, China.}
\email{tangyb@hust.edu.cn}

\author[H. Wang]{Huaqiao Wang}
\address{College of Mathematics and Statistics, Chongqing University, Chongqing, 401331, China.}
\email{hqwang111@163.com }

\title[Asymptotic behavior for stochastic LLB equations]{Asymptotic behavior for the 1-D stochastic Landau Lifshitz Bloch equation}

\thanks{Z. Qiu is supported by the CSC under grant No. 201806160015. Y. Tang is supported by NSFC grants 11971188, 11471129. H. Wang is supported by National Postdoctoral Program for Innovative Talents (No. BX201600020) and Project No. 2019CDXYST0015 supported by the Fundamental Research Funds for the Central Universities.}

\keywords{Stochastic LLB equation, Large deviation principle, Weak convergence approach, Central limit theorem.}
\subjclass[2010]{35Q35, 76D05, 35A07}

\date{\today}

\begin{document}
\begin{abstract}
The stochastic Landau-Lifshitz-Bloch equation describes the phase spins in a ferromagnetic material and has significant role in simulating heat-assisted magnetic recording. In this paper, we consider the deviation of the solution to the 1-D stochastic Landau-Lifshitz-Bloch equation, that is, we give the asymptotic behavior of the trajectory $\frac{u_\varepsilon-u_0}{\sqrt{\varepsilon}\lambda(\varepsilon)}$ as $\varepsilon\rightarrow 0+$, for $\lambda(\varepsilon)=\frac{1}{\sqrt{\varepsilon}}$ and $1$ respectively. In other words, the large deviation principle and the central limit theorem are established respectively.
\end{abstract}

\maketitle
\section{Introduction}
The Landau-Lifshitz-Gilbert (LLG) equation describes the dynamical behavior of magnetization in a ferromagnetic material below the Curie temperature $T_c$, see \cite{Gilbert,Landau}. The specific form is as follows:
\begin{eqnarray*}
dm=\lambda_1m\times H_{e}dt-\lambda_2m\times(m\times H_{e})dt,
\end{eqnarray*}
where the magnetization $m$ is in the two-dimensional sphere $\mathbb{S}^2$. However, for high temperature, we must use the Landau-Lifshitz-Bloch (LLB) equation derived by \cite{G1,G2}, to describe this phenomenon, which is actually valid for the full range of temperature. Essentially, the system consists of the LLG equation at low temperature and the Ginzburg-Landau theory of phase transitions. Let $u=\frac{m}{m_0}$ be the average spin polarization, where $m_0$ is the saturation magnetization value at $T=0$. Then, the LLB equation has the following form:
\begin{eqnarray*}
du=\gamma u\times H_{e}dt+ \frac{L_{1}}{|u|^{2}}(u\cdot H_{e})udt-\frac{L_{2}}{|u|^{2}}u\times(u\times H_{e})dt,
\end{eqnarray*}
where $H_{e}$ denotes the effect field equipped with the form,
\begin{eqnarray*}
H_{e}=\triangle u-\frac{1}{\zeta}\left(1+\frac{3}{5}\frac{T}{T-T_c}|u|^2\right)u,
\end{eqnarray*}
$\zeta$ denotes the longitudinal susceptibility. The symbol $|\cdot|$ is the Euclidean norm in $\mathbb{R}^3$, $\times$ stands for the vector cross product in $\mathbb{R}^3$, $\gamma>0$ represents the gyromagnetic ratio, and $L_1,L_2$ stand for the longitudinal and transverse damping parameters respectively.

Using the fact that $a\times (b\times c)=b(a\cdot c)-c(a\cdot b)$, we have
\begin{eqnarray*}
u\times(u\times H_{e})=(u\cdot H_{e})u-|u|^2H_{e}.
\end{eqnarray*}
We consider the case that the temperature $T>T_c$. As a result, the longitudinal $L_1$ is equal to the transverse damping parameter $L_2$, denoting $\nu_{1}:=L_1=L_2$. Therefore, the system can be rewritten as:
\begin{eqnarray*}
du=\gamma u\times H_{e}dt+\nu_1H_{e}dt,
\end{eqnarray*}
that is,
\begin{eqnarray}\label{equ1.1}
du=\nu_{1}\triangle u dt+ \gamma u\times \triangle udt-\nu_{2}(1+\mu|u|^{2})udt,
\end{eqnarray}
where $\nu_2:=\frac{\zeta}{\nu_1}$ and $\mu:=\frac{3}{5}\frac{T}{T-T_c}$.

Because of its physical importance, mathematical challenges, this model receives extensive studies and some progresses have been made in the deterministic case, in \cite{Le} for the existence of global weak solution and in \cite{Xu} for the existence of strong time periodic solution with an external magnetic field and established the time regularity in $\mathbb{R}^3$.

In the theory of ferromagnetism, describing the phase transitions disturbed by random thermal fluctuations which is significant problem and  gains lots of traction. Therefore, the stochastic factors should be taken into account in the description of the dynamics of the magnetization, to reveal the transition caused by noise. The works \cite{Banas,Z2} introduced the stochastic term into system (\ref{equ1.1}) by perturbing the effect field. That is, replacing $H_{e}$ by $H_{e}+\mathcal{B}$, $\mathcal{B}$ is white noise, which will be introduced later. Therefore, system (\ref{equ1.1}) becomes
\begin{eqnarray*}
\frac{du}{dt}=\nu_{1}\triangle u + \gamma u\times \left(\triangle u+\frac{d\mathcal{B}}{dt}\right)-\nu_{2}(1+\mu|u|^{2})u+\nu_1\frac{d\mathcal{B}}{dt}.
\end{eqnarray*}

Let $\mathcal{W}$ be a Wiener process defined on $H=L^2(\mathcal{D})$ with covariance operator $Q$, where $Q$ is a linear positive operator on  $L^2(\mathcal{D})$, which is trace and hence compact. Let $\{e_{k}\}_{k\geq 1}$ be a complete orthonormal basis of $L^2(\mathcal{D})$ such that $Qe_{k}=\lambda_{k}e_{k}$, then $\mathcal{W}$ can be written formally as the expansion $\mathcal{W}(t,\omega)=\sum_{k\geq 1}\sqrt{\lambda_{k}}e_{k}\mathcal{W}_{k}(t,\omega)$, where $\{\mathcal{W}_{k}\}$ is a sequence of independent standard real-valued 1-D Brownian motions. We also have that $\mathcal{W}\in \mathcal{C}([0,\infty),L^2(\mathcal{D}))$ almost surely, see \cite{Zabczyk}. Therefore, for each $k\in \mathbb{N}$, $\widetilde{G}_{k}:=\widetilde{G}e_k=GQ^\frac{1}{2}e_{k}$, we define $\mathcal{B}:=G\mathcal{W}=\sum_{k\geq 1}\widetilde{G}_k\mathcal{W}_k$.

Let $L_{Q}(\mathcal{H}_0,X)$ denote the collection of Hilbert-Schmidt operators, the set of all linear operators $\mathcal{K}$ such that $\mathcal{K}Q^\frac{1}{2}: H\rightarrow X$, endowed with the norm
\begin{eqnarray*}
\|\mathcal{K}\|_{L_Q(\mathcal{H}_0;X)}^2:=\sum_{k\ge1}||\mathcal{K}Q^{\frac{1}{2}}e_{k}||_{X}^{2}=\sum_{k\ge1}||[\mathcal{K}Q^{\frac{1}{2}}]^*e_{k}||_{X}^{2},
\end{eqnarray*}
where $\mathcal{H}_0=Q^\frac{1}{2}H$ and $X$ is a separable Hilbert space. Throughout the paper, we assume that $G\in L_{Q}(\mathcal{H}_0;H^1)$, hence
\begin{eqnarray}\label{equ1.2}
\sum_{k\geq 1}\|\widetilde{G}_{k}\|_{H^{1}}^2<\infty.
\end{eqnarray}

Here we just consider the stochastic LLB equation with linear noise
\begin{eqnarray}\label{equ1.3}
\frac{du}{dt}=\nu_{1}\triangle u + \gamma u\times \triangle u-\nu_{2}(1+\mu|u|^{2})u+u\times G\frac{d\mathcal{W}}{dt}.
\end{eqnarray}
For the system (\ref{equ1.3}), Jiang, Ju and Wang\cite{Jiang} established the existence of weak (in the sense of partial differential equation (PDE)) martingale solution in 3-dimensional bounded domains, and Le \cite{Le} considered the strong pathwise solution in 1- or 2-dimensional bounded domains, the martingale solution in 3-dimensional case and the existence of invariant measure. Moreover, in the case of degenerated additive noise, Guo, Huang and Wang\cite{guo} proved the uniqueness of the invariant measure for the corresponding transition semigroup.

In this paper, we are devoted to establishing the asymptotic properties of distribution of the solution $u_\varepsilon$, that is, the asymptotic behavior of the trajectory, $\frac{u_\varepsilon(t)-u_0(t)}{\sqrt{\varepsilon} \lambda(\varepsilon)}$, where $\lambda(\varepsilon)$ is the deviation scale, $u_0$ is the solution of system (\ref{equ1.1}), and $u_\varepsilon$ is the solution of following stochastic system
\begin{eqnarray}\label{equ1.4}
\frac{du_\varepsilon}{dt}=\nu_{1}\triangle u_\varepsilon + \gamma u_\varepsilon\times \triangle u_\varepsilon-\nu_{2}(1+\mu|u_\varepsilon|^{2})u_\varepsilon+\sqrt{\varepsilon}u_\varepsilon\times G\frac{d\mathcal{W}}{dt},
\end{eqnarray}
with the initial data $u_\varepsilon(0,x)=u(0)$ and $u_\varepsilon|_{\partial \mathcal{D}}=0$ for $t\in [0,\infty)$. Here we focus on the cases of $\lambda(\varepsilon)=\frac{1}{\sqrt{\varepsilon}}$ and $\lambda(\varepsilon)=1$ respectively. More precisely, we shall establish the large deviation principle (LDP) and the central limit theorem corresponding to the deviation scale $\lambda(\varepsilon)=\frac{1}{\sqrt{\varepsilon}}$ and $\lambda(\varepsilon)=1$ respectively.

In various papers LDP of solutions to stochastic partial differential equations (SPDEs) was established by the weak convergence method based on the variational representations of infinite-dimensional Wiener processes introduced by \cite{Budhiraja, Dupuis}. We refer to \cite{Millet,Sundar,XZ,ZZ} and the references therein for the 2-D Navier-Stokes equations, \cite{Duan} for the Boussinesq equations, \cite{Z1} for the LLG equation, \cite{zhang} for tamed 3D Navier-Stokes equations, \cite{Chueshov} for more general hydrodynamic models.

Although our proofs also rely on the weak convergence method, we develop some new estimates due to the complexity of the nonlinear term. Even so, only the case of $d=1$ is considered, and there exist some technique difficulties in $d=2,3$. Different from \cite{Duan,Sundar}, we shall establish the convergence of the law of $u_\varepsilon$ on space $\mathcal{C}([0,T];H^1)\cap L^2(0,T;H^2)$ via the a priori strong convergence property, $u_\varepsilon\rightarrow u_0$ in $L^2(0,T; H^{1})$ as $\varepsilon\rightarrow 0$ with the spirit of \cite{WZZ}, instead of dealing with localized integral estimates of the time increments.

Before the proof of weak convergence, we give a simplified proof of the existence and uniqueness of solution to stochastic controlled system, and obtain the uniform a priori estimates independent of $\varepsilon$ which cannot be obtained from the corresponding equations using the Girsanov transformation.

When $\lambda(\varepsilon)=1$, we shall show that $\frac{u_\varepsilon(t)-u_0(t)}{\sqrt{\varepsilon}}$ converges to a solution $V_0$ to the following system (central limit theorem):
\begin{eqnarray}\label{equ1.5}
&&dV_0-\nu_1\triangle V_0dt=\gamma(V_0\times \triangle u_0+u_0\times \triangle V_0)dt\nonumber\\
&&\qquad\qquad\qquad\quad-\nu_2(2\mu(u_{0}\cdot V_0)u_0+(1+\mu|u_0|^2)V_0)dt+u_0\times Gd\mathcal{W}.
\end{eqnarray}
We also get the well-posedness of system (\ref{equ1.5}) and the estimation of the a priori bound for the process $\frac{u_\varepsilon(t)-u_0(t)}{\sqrt{\varepsilon}}$. The high nonlinearity of terms $|u|^2u$ and $u\times \triangle u$ makes the estimates challenging.

The rest of the paper is arranged as follows. In Section \ref{sec2}, we recall some deterministic and stochastic preliminaries associated with system (\ref{equ1.3}) and then state our results. Section \ref{sec3} gives the global existence and uniqueness of solution, and the uniform a priori estimates for the controlled system. The LDP is then proved in Section \ref{sec4}. Section 5 establishes the central limit theorem.

\maketitle
\section{Preliminaries and main results}\label{sec2}

In this section, we begin by reviewing some deterministic and stochastic preliminaries \cite{Zabczyk,Ellis,Wang2014} and then give our results.

Let $\mathcal{D}\subset \mathbb{R}$ be an open bounded domain. $H^m(\mathcal{D})$ denotes the Sobolev spaces of functions having distributional derivatives up to order $m\in \mathbb{N}^+$ integrable in $L^2(\mathcal{D})$, endowed with the following norm
\begin{eqnarray}\label{equ2.1}
\|u\|_{H^m}^2:=\sum_{|\alpha|\leq m}\|\partial^\alpha u\|_{L^{2}}^2.
\end{eqnarray}
The inner product of $H^m$ will be denoted by $(\cdot,\cdot)_{H^m}=\sum_{|\alpha|\leq m}(\partial^\alpha\cdot, \partial^\alpha\cdot)$, where the symbol $(\cdot,\cdot)$ represents the inner product of $L^2(\mathcal{D})$. Due to the Dirichlet boundary condition, the well-known Poincar\'{e} inequality holds:
\begin{eqnarray*}
\|u\|_{L^2}\leq C\|\nabla u\|_{L^2}.
\end{eqnarray*}
Therefore, the norms $\|u\|_{H^1}$, $\|u\|_{H^2}$ are equivalent to the norms $\|\nabla u\|_{L^2}$, $\|\triangle u\|_{L^2}$ respectively and the following interpolation inequality holds:
\begin{eqnarray}\label{equ2.2}
\|u\|_{L^\infty(\mathcal{D})}^2\leq 2 \|u\|_{L^2(\mathcal{D})}\|u\|_{H^1(\mathcal{D})}.
\end{eqnarray}

The following estimates which will be used throughout the paper.
\begin{lemma}\label{lem2.1} If $u\in H^1$, $v,w\in H^2$, then
\begin{eqnarray}
&&(u\times v,v)=0, ~(u\times \triangle v, u)=0, \label{equ2.3}\\
&&|(u\times \triangle v, \triangle w)|\leq C_{1}\|\triangle w\|_{L^2}^2+C_2\|\triangle v\|_{L^{2}}^2\|u\|_{H^1}^2, \label{equ2.4}
\end{eqnarray}
where $C_1,C_2$ are two constants.
\end{lemma}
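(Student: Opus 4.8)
The plan is to treat the three identities in \eqref{equ2.3} first, since they are purely algebraic consequences of the scalar triple product, and then to prove the quantitative bound \eqref{equ2.4} by integration by parts combined with the interpolation inequality \eqref{equ2.2}. For the first identity $(u\times v,v)=0$, I would simply note that the integrand $(u\times v)\cd v$ vanishes pointwise because $u\times v$ is orthogonal to $v$ in $\R^3$; integrating over $\mathcal{D}$ gives zero. The same pointwise orthogonality argument gives $(u\times \triangle v, u)=0$, since $u\times\triangle v\perp u$ at every point. These require no regularity beyond making the integrals finite, which the stated hypotheses $u\in H^1$, $v\in H^2$ guarantee via Sobolev embedding in dimension one (indeed $H^1(\mathcal{D})\hookrightarrow L^\infty(\mathcal{D})$).

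For the estimate \eqref{equ2.4}, the natural move is to integrate by parts to lower the order of one of the Laplacians. Writing $(u\times\triangle v,\triangle w)=-\int_{\mathcal{D}}\del_x\big((u\times\triangle v)\big)\cd \del_x w\,dx$ (the boundary terms vanish because $w$ satisfies the Dirichlet condition, or by a density argument), we expand $\del_x(u\times\triangle v)=\del_x u\times\triangle v + u\times \del_x\triangle v$. This splits the integral into two pieces: one of the form $\int (\del_x u\times\triangle v)\cd\del_x w\,dx$ and one of the form $\int (u\times\del_x\triangle v)\cd\del_x w\,dx$. The first piece is estimated directly: bound $\del_x w$ in $L^2$, $\triangle v$ in $L^2$, and $\del_x u$ in $L^\infty$ — but in $d=1$ one would rather put $\del_x u\times\triangle v$ in $L^2$ using $\|\del_x u\|_{L^\infty}\le\|\del_x u\|_{H^1}\lesssim\|\triangle u\|_{L^2}$, or alternatively move a derivative so that the $H^1$ norm of $u$ appears; after applying \eqref{equ2.2} and Young's inequality one obtains a contribution controlled by $\ve\|\triangle w\|_{L^2}^2 + C\|\triangle v\|_{L^2}^2\|u\|_{H^1}^2$ (using $\|\na w\|_{L^2}\lesssim\|\triangle w\|_{L^2}$).

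The delicate term is the second one, $\int(u\times\del_x\triangle v)\cd\del_x w\,dx$, because it still contains a third-order derivative of $v$, which we are not allowed to use. The key algebraic observation is that one can rewrite $(u\times\del_x\triangle v)\cd\del_x w$ by moving the derivative off $\triangle v$: integrating by parts once more, $\int(u\times\del_x\triangle v)\cd\del_x w = -\int(\del_x u\times\triangle v)\cd\del_x w - \int(u\times\triangle v)\cd\del_x^2 w$, and $\del_x^2 w=\triangle w$ in one dimension, so the problematic term reduces to $-(u\times\triangle v,\triangle w)$ itself plus the already-controlled first piece. In other words, combining the two integration-by-parts steps produces the identity $2(u\times\triangle v,\triangle w) = -(\text{lower-order terms involving }\del_x u,\triangle v,\na w)$, from which \eqref{equ2.4} follows after estimating those lower-order terms by Hölder, the embedding $H^1\hookrightarrow L^\infty$, the interpolation inequality \eqref{equ2.2}, Poincaré, and Young's inequality with a small parameter to absorb the $\|\triangle w\|_{L^2}^2$ factor into $C_1$. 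I expect the main obstacle to be precisely this bookkeeping: making sure the third-order term genuinely cancels rather than merely being re-expressed at the same order, and tracking which copy of $\|\triangle w\|_{L^2}^2$ gets absorbed versus which stays with the constant $C_1$; the regularity hypotheses are exactly at the threshold where the formal integrations by parts are justified, so a brief density argument (approximating $u,v,w$ by smooth functions vanishing near $\del\mathcal{D}$) should be mentioned to make the manipulations rigorous.
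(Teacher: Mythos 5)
Your treatment of \eqref{equ2.3} is fine and matches the paper: pointwise orthogonality of $u\times v$ to $v$ (and of $u\times\triangle v$ to $u$) plus integrability is exactly the ``simple calculation'' intended.

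For \eqref{equ2.4}, however, your proposed argument has a genuine gap, and moreover it is far more complicated than what the estimate requires. The two integrations by parts you perform are inverses of each other, so the ``cancellation'' you hope for does not happen: writing $A=(u\times\triangle v,\triangle w)$, $B=\int(\del_x u\times\triangle v)\cd\del_x w$, $C=\int(u\times\del_x\triangle v)\cd\del_x w$, your first step gives $A=-B-C$ and your second step gives $C=-B-A$; substituting the second into the first yields the tautology $A=A$, not $2A=\text{(lower order)}$. This is exactly the obstacle you flagged, and it is fatal as written. In addition, the intermediate quantity $\del_x\triangle v$ is only a distribution under the hypothesis $v\in H^2$ (you would need $v\in H^3$ to even write $C$ as an integral), the boundary terms in the first integration by parts involve $\del_x w$ on $\del\mathcal{D}$, which the Dirichlet condition $w|_{\del\mathcal{D}}=0$ does not kill, and your estimate of $B$ via $\|\del_x u\|_{L^\infty}\lesssim\|\triangle u\|_{L^2}$ uses $u\in H^2$, which is not assumed.

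No integration by parts is needed at all. Since $d=1$, the interpolation inequality \eqref{equ2.2} (equivalently $H^1\hookrightarrow L^\infty$) gives $\|u\|_{L^\infty}\le C\|u\|_{H^1}$, so by H\"{o}lder
\begin{equation*}
|(u\times\triangle v,\triangle w)|\le\|\triangle w\|_{L^2}\,\|\triangle v\|_{L^2}\,\|u\|_{L^\infty}\le C\,\|\triangle w\|_{L^2}\,\|\triangle v\|_{L^2}\,\|u\|_{H^1},
\end{equation*}
and Young's inequality then yields \eqref{equ2.4} directly, with $C_1$ adjustable at the expense of $C_2$. This is the paper's proof; your plan should be replaced by it.
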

\begin{proof} (\ref{equ2.3}) can be obtained by a simple calculation. For (\ref{equ2.4}), by the interpolation inequality (\ref{equ2.2}) and the H\"{o}lder inequality, we have
\begin{align*}
|(u\times \triangle v, \triangle w)|&\leq\|\triangle w\|_{L^2}\|\triangle v\|_{L^{2}}\|u\|_{L^\infty}\leq C\|\triangle w\|_{L^2}\|\triangle v\|_{L^{2}}\|u\|_{H^1}\\
&\leq C_{1}\|\triangle w\|_{L^2}^2+C_2\|\triangle v\|_{L^{2}}^2\|u\|_{H^1}^2.
\end{align*}
This completes the proof.
\end{proof}

The following spaces involving fractional derivative in time are useful since the solutions of stochastic system  are H\"{o}lder continuous of order strictly less than $\frac{1}{2}$ with respect to time $t$.

For any fixed $p>1$ and $\alpha\in(0,1)$ we define,
\begin{equation*}
W^{\alpha,p}(0,T;X)=\left\{v\in L^{p}(0,T;X):\int_{0}^{T}\int_{0}^{T}\frac{\|v(t_{1})-v(t_{2})\|_{X}^{p}}{|t_{1}-t_{2}|^{1+\alpha p}}dt_{1}dt_{2}<\infty\right\},
\end{equation*}
endowed with the norm,
\begin{equation*}
\|v\|_{W^{\alpha,p}(0,T;X)}^p:=\int_{0}^{T}\|v(t)\|_{X}^{p}dt+\int_{0}^{T}\int_{0}^{T}\frac{\|v(t_{1})-v(t_{2})\|_{X}^{p}}{|t_{1}-t_{2}|^{1+\alpha p}}dt_{1}dt_{2},
\end{equation*}
where $X$ is a separable Hilbert space.

For the case $\alpha=1$, we take,
\begin{equation*}
W^{1,p}(0,T;X):=\left\{v\in L^{p}(0,T;X):\frac{dv}{dt}\in L^{p}(0,T;X)\right\},
\end{equation*}
which is the classical Sobolev space with its usual norm,
\begin{equation}\label{equ2.5}
\|v\|_{W^{1,p}(0,T;X)}^{p}:=\int_{0}^{T}||v(t)||_{X}^{p}+||v'(t)||_{X}^{p}dt.
\end{equation}
Note that for $\alpha\in(0,1)$, $ W^{1,p}(0,T;X)\subset W^{\alpha,p}(0,T;X)$.

Given an $X$-valued predictable process $f\in L^{2}(\Omega;L^{2}_{loc}([0,\infty),L_{Q}(\mathcal{H}_0,X)))$. Taking $f_{k}=fQ^\frac{1}{2}e_{k}$, one can define the stochastic integral,
\begin{eqnarray}\label{equ2.6}
M_{t}:=\int_{0}^{t}fd\mathcal{W}=\sum_{k\ge1}\int_{0}^{t}f_{k}d\mathcal{W}_{k},
\end{eqnarray}
as an element in $\mathcal{M}_{X}^{2}$ which is the space of all $X$-valued square integrable martingales \cite{Zabczyk}. For process $\{M_{t}\}_{t\geq 0}$, the Burkholder-Davis-Gundy inequality implies the following inequalities of mathematical expectation
\begin{eqnarray}\label{equ2.7}
{\mathbb E}\left(\sup_{0\leq t\leq T}\left\|\int_{0}^{t}fd\mathcal{W}\right\|_{X}^{p}\right)\leq C{\mathbb E}\left(\int_{0}^{T}\|f\|_{L_{Q}(\mathcal{H}_0;X)}^{2}dt\right)^{\frac{p}{2}},\;\forall \; p\geq1.
\end{eqnarray}
As in \cite{F1}, we also have for any $p\geq 2$ and any $\alpha \in [0,\frac{1}{2})$,
\begin{eqnarray}\label{equ2.8}
\mathbb{E}\left\|\int_{0}^{t}fd\mathcal{W}\right\|_{W^{\alpha,p}(0,T;X)}^{p}\leq C\mathbb{E}\int_{0}^{T}\left\|f\right\|^{p}_{L_{Q}(\mathcal{H}_0;X)}dt.
\end{eqnarray}
In addition, by the condition (\ref{equ1.2}), we have
\begin{eqnarray}\label{equ2.9}
\|u\times G\|_{L_Q(\mathcal{H}_0;H^1)}^2\leq C\|u\|_{H^1}^2,\; \forall\; u\in H^1.
\end{eqnarray}

In fact, according to the definition of $L_Q$ and the interpolation inequality (\ref{equ2.2}), we have
\begin{eqnarray}\label{equ2.10}
&&\|u\times G\|_{L_Q(\mathcal{H}_0;H^1)}^2=\sum_{k\geq1}\|u\times \widetilde{G}_k\|_{H^1}^2\nonumber\\
&&\leq C\sum_{k\geq1}\|\nabla u\times \widetilde{G}_k\|_{L^2}^2+\|u\times \nabla \widetilde{G}_k\|_{L^2}^2\nonumber\\
&&\leq C\sum_{k\geq1}(\|\nabla u\|_{L^2}^2\|\widetilde{G}_k\|_{L^\infty}^2+\|u\|_{L^\infty}^2\|\widetilde{G}_{k}\|_{H^1}^2)\leq C\|u\|_{H^1}^2.
\end{eqnarray}

Next, we give the main results of this paper.
\begin{theorem}\label{the2.1} Suppose that the initial data $u(0)\in L^{p}(\Omega; H^{1})$ and the condition (\ref{equ1.2}) holds. Then, for any  $\varepsilon\in(0,1]$, the solution $\{u_{\varepsilon}\}_{\varepsilon\in (0,1]}$ to system (\ref{equ1.4}) satisfies the large deviation principle on space $L^2(\Omega, L^\infty(0,T;H^1)\cap L^2(0,T;H^2))$ with good rate function
\begin{eqnarray*}
I(u)=\inf_{\{h\in L^{2}(0,T;\mathcal{H}_{0}):u=\mathcal{G}^{0}(\int_{0}^{\cdot}h(s)ds)\}}\left\{\frac{1}{2}\int_{0}^{T}\|h\|_{\mathcal{H}_{0}}^{2}dt\right\},
\end{eqnarray*}
where the infimum of empty set is taken to be infinity.
\end{theorem}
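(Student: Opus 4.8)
The plan is to establish the large deviation principle by the weak convergence method of Budhiraja--Dupuis, which reduces the problem to verifying two conditions on a ``skeleton'' map. By the pathwise existence and uniqueness theory for (\ref{equ1.4}), together with a Yamada--Watanabe argument, there is a measurable map $\mathcal{G}^\varepsilon$ with $u_\varepsilon=\mathcal{G}^\varepsilon(\mathcal{W})$; and for $h\in L^2(0,T;\mathcal{H}_0)$ I set $\mathcal{G}^0\big(\int_0^\cdot h(s)\,ds\big):=u^h$, the unique solution of the deterministic skeleton equation
\[
\frac{du^h}{dt}=\nu_1\triangle u^h+\gamma\, u^h\times\triangle u^h-\nu_2(1+\mu|u^h|^2)u^h+u^h\times Gh,\qquad u^h(0)=u(0).
\]
It then suffices to prove: \emph{(a)} for every $N<\infty$ and every family $\{h_\varepsilon\}$ of $\mathcal{H}_0$-valued predictable controls with $\int_0^T\|h_\varepsilon\|_{\mathcal{H}_0}^2\,dt\le N$ which converges in distribution, in the weak topology of $L^2(0,T;\mathcal{H}_0)$, to some $h$, the processes $\mathcal{G}^\varepsilon\big(\mathcal{W}+\tfrac{1}{\sqrt\varepsilon}\int_0^\cdot h_\varepsilon\,ds\big)$ converge in distribution to $\mathcal{G}^0\big(\int_0^\cdot h\,ds\big)$ on $\mathcal{X}:=\mathcal{C}([0,T];H^1)\cap L^2(0,T;H^2)$; and \emph{(b)} the map $h\mapsto u^h$ is continuous from $S_N:=\{h\in L^2(0,T;\mathcal{H}_0):\int_0^T\|h\|_{\mathcal{H}_0}^2\,dt\le N\}$, endowed with the weak $L^2$ topology, into $\mathcal{X}$. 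Condition (b) realizes each level set $\{u:I(u)\le N/2\}=\{u^h:h\in S_N\}$ as a continuous image of a weakly compact set, hence compact, so $I$ is a good rate function, and (a) together with (b) yield, via the Budhiraja--Dupuis criterion, the Laplace principle and thus the LDP.

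For (a), Girsanov's theorem shows that $U_\varepsilon:=\mathcal{G}^\varepsilon\big(\mathcal{W}+\tfrac{1}{\sqrt\varepsilon}\int_0^\cdot h_\varepsilon\,ds\big)$ solves the stochastic controlled system
\[
dU_\varepsilon=\Big(\nu_1\triangle U_\varepsilon+\gamma\, U_\varepsilon\times\triangle U_\varepsilon-\nu_2(1+\mu|U_\varepsilon|^2)U_\varepsilon+U_\varepsilon\times Gh_\varepsilon\Big)\,dt+\sqrt\varepsilon\, U_\varepsilon\times G\,d\mathcal{W}.
\]
The first task is global well-posedness of this system together with a priori bounds in $L^p(\Omega;L^\infty(0,T;H^1)\cap L^2(0,T;H^2))$ and a fractional-in-time bound in $L^p(\Omega;W^{\alpha,p}(0,T;L^2))$, $\alpha<\tfrac12$, all \emph{uniform in $\varepsilon\in(0,1]$ and over controls satisfying $\int_0^T\|h_\varepsilon\|_{\mathcal{H}_0}^2\,dt\le N$}; the energy estimate uses the cancellations in (\ref{equ2.3}) for $\gamma\,u\times\triangle u$ and the dissipative sign of the cubic term, while the control term and the It\^o term are absorbed through (\ref{equ2.9}), the estimates (\ref{equ2.7}), (\ref{equ2.8}) and Young's inequality, so that Gronwall yields constants depending only on $N$ and the data. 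These bounds give tightness of the laws of $U_\varepsilon$ on $L^2(0,T;H^1)$ by an Aubin--Lions type compactness lemma pairing the $L^2(0,T;H^2)$ bound with the $W^{\alpha,p}(0,T;L^2)$ bound; since $\{h_\varepsilon\}$ ranges in the weakly compact set $S_N$ and $\sqrt\varepsilon\mathcal{W}\to0$, a Skorokhod representation places copies of $(U_\varepsilon,h_\varepsilon,\sqrt\varepsilon\mathcal{W})$ on a new probability space with $U_\varepsilon$ converging strongly in $L^2(0,T;H^1)$, $h_\varepsilon$ weakly in $L^2(0,T;\mathcal{H}_0)$, and $\sqrt\varepsilon\mathcal{W}\to0$, all almost surely. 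Since the limiting equation carries no noise, the identification reduces to passing to the limit in the weak PDE formulation: the stochastic term is negligible because $\sqrt\varepsilon\to0$ and its integrand is bounded by (\ref{equ2.9}); $U_\varepsilon\times Gh_\varepsilon\to u^h\times Gh$ combines the strong $L^2(0,T;H^1)$ (hence $L^2(0,T;L^\infty)$, in one dimension) convergence of $U_\varepsilon$ with $h_\varepsilon\rightharpoonup h$, after rewriting $(U_\varepsilon\times Gh_\varepsilon,\psi)=(Gh_\varepsilon,\psi\times U_\varepsilon)$; and $U_\varepsilon\times\triangle U_\varepsilon$ and $|U_\varepsilon|^2U_\varepsilon$ pass to the limit via strong $L^2(0,T;H^1)$ and weak $L^2(0,T;H^2)$ convergence. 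Uniqueness for the skeleton equation identifies the limit as $u^h$; being deterministic, the whole family converges in distribution, and, in the spirit of \cite{WZZ}, the convergence is strengthened from the $L^2(0,T;H^1)$ topology to that of $\mathcal{X}$ using the uniform $L^\infty(0,T;H^1)\cap L^2(0,T;H^2)$ bounds and an energy(-equality) argument, rather than localized estimates of the time increments.

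For (b) I argue by compactness in the same way. For $h_n\in S_N$ with $h_n\rightharpoonup h$ weakly in $L^2(0,T;\mathcal{H}_0)$, the uniform bounds for $u^{h_n}$ in $L^\infty(0,T;H^1)\cap L^2(0,T;H^2)$, together with $\tfrac{d}{dt}u^{h_n}\in L^2(0,T;L^2)$ uniformly (since $\|u^{h_n}\times Gh_n\|_{L^2}\le C\|u^{h_n}\|_{L^\infty}\|Gh_n\|_{L^2}$ and $\int_0^T\|Gh_n\|_{L^2}^2\,dt\le C\int_0^T\|h_n\|_{\mathcal{H}_0}^2\,dt$), give via Aubin--Lions a subsequence with $u^{h_n}\to U$ strongly in $L^2(0,T;H^1)$ and weakly in $L^2(0,T;H^2)$; passing to the limit in the skeleton equation exactly as above (the control term again converging because $u^{h_n}\times Gh_n\rightharpoonup U\times Gh$ in $L^2(0,T;L^2)$) and invoking uniqueness gives $U=u^h$, whence the whole sequence converges, and the upgrade to $\mathcal{X}$ is as before; thus $\{u^h:h\in S_N\}$ is compact in $\mathcal{X}$ and $I$ is good. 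The main obstacle throughout is obtaining the a priori estimates uniformly in $\varepsilon$ and $N$: controlling the difference terms that arise in the well-posedness (uniqueness) argument forces one to work at the $H^2$ level, where the curl contribution $w\times\triangle u$ is handled by (\ref{equ2.4}) while $u\times\triangle w$ is killed by (\ref{equ2.3}), and the cubic term and the control term must be absorbed by carefully balanced interpolation so that the Gronwall constants remain independent of $\varepsilon$ and of $N$.
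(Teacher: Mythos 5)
Your proposal follows essentially the same route as the paper: the Budhiraja--Dupuis weak convergence criterion, uniform-in-$\varepsilon$ energy and $W^{\alpha,p}$-in-time estimates for the controlled stochastic system, tightness/Skorokhod identification giving a priori strong $L^2(0,T;H^1)$ convergence that is then upgraded to $\mathcal{C}([0,T];H^1)\cap L^2(0,T;H^2)$ by an $H^1$-level energy estimate (localized on sets where the $H^1$/$H^2$ norms are bounded), and compactness of the level sets via Aubin--Lions for the skeleton equation. The only cosmetic differences are that the paper identifies the shifted solution through pathwise uniqueness rather than invoking Girsanov explicitly, and that the Gronwall constants need only be uniform in $\varepsilon$ for each fixed bound $M$ on the controls (dependence on $M$ is allowed), so your final requirement of $N$-independence is stronger than necessary.
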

\begin{theorem}\label{the2.2}  Assume that the operator $G$ and the initial data $u(0)$ satisfy the same conditions as in Theorem 2.1. Then, the solution $\{u_{\varepsilon}\}_{\varepsilon\in (0,1]}$ to system (\ref{equ1.4}) satisfies the central limit theorem on space $L^2(\Omega, L^\infty(0,T;H^1)\cap L^2(0,T;H^2))$, that is, for any $t\in[0,T]$,
\begin{eqnarray*}
\lim_{\varepsilon\rightarrow 0}\left[\mathbb{E}\left(\sup_{s\in [0,t]}\Big\|\nabla\left(\frac{u_\varepsilon-u_0}{\sqrt{\varepsilon}}-V_0\Big)\right\|_{L^2}^2\right)+\nu_1\mathbb{E}\int_{0}^{t}\Big\|\triangle \Big(\frac{u_\varepsilon-u_0}{\sqrt{\varepsilon}}-V_0\Big)\Big\|_{L^2}^2ds\right]=0.
\end{eqnarray*}
\end{theorem}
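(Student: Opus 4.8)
The plan is to set $V_\varepsilon := \frac{u_\varepsilon - u_0}{\sqrt{\varepsilon}}$ and derive the equation it satisfies by subtracting system (\ref{equ1.1}) from (\ref{equ1.4}) and dividing by $\sqrt{\varepsilon}$. Writing $Z_\varepsilon := V_\varepsilon - V_0$, I would obtain a stochastic equation for $Z_\varepsilon$ of the form
\begin{eqnarray*}
dZ_\varepsilon - \nu_1 \triangle Z_\varepsilon\,dt = \gamma(Z_\varepsilon \times \triangle u_0 + u_0 \times \triangle Z_\varepsilon)\,dt + \mathcal{R}_\varepsilon\,dt + (u_\varepsilon - u_0)\times G\,d\mathcal{W},
\end{eqnarray*}
where $\mathcal{R}_\varepsilon$ collects all the remainder terms: the difference of the cross-diffusion terms that are not linear in $Z_\varepsilon$ (i.e. terms like $\gamma\sqrt{\varepsilon}\,V_\varepsilon \times \triangle V_\varepsilon$), the difference of the cubic terms $\nu_2[(1+\mu|u_\varepsilon|^2)u_\varepsilon - (1+\mu|u_0|^2)u_0]/\sqrt{\varepsilon} - \nu_2(2\mu(u_0\cdot V_0)u_0 + (1+\mu|u_0|^2)V_0)$, and so on. The key structural fact is that $\mathcal{R}_\varepsilon$ should be $o(1)$ in a suitable norm, being built either from factors of $\sqrt{\varepsilon}$ times controlled quantities, or from differences $V_\varepsilon - V_0 = Z_\varepsilon$ that get absorbed into Gr\"onwall, plus the $L^2(0,T;H^1)$ strong convergence $u_\varepsilon \to u_0$ already available from the LDP section.

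Next I would perform the energy estimate: apply It\^o's formula to $\|\nabla Z_\varepsilon\|_{L^2}^2$ (equivalently $\|Z_\varepsilon\|_{H^1}^2$ by Poincar\'e). The diffusion term gives $-2\nu_1\|\triangle Z_\varepsilon\|_{L^2}^2$ after integration by parts. For the cross term $\gamma(u_0 \times \triangle Z_\varepsilon)$ tested against $-\triangle Z_\varepsilon$, the orthogonality (\ref{equ2.3}) in Lemma \ref{lem2.1} kills the leading contribution, while $\gamma(Z_\varepsilon\times \triangle u_0, \triangle Z_\varepsilon)$ is handled by (\ref{equ2.4}), producing $C_1\|\triangle Z_\varepsilon\|_{L^2}^2 + C_2\|\triangle u_0\|_{L^2}^2\|Z_\varepsilon\|_{H^1}^2$; the first piece is absorbed into the dissipation (choosing $C_1$ small enough relative to $\nu_1$, which is legitimate since $C_1$ in the lemma can be taken arbitrarily small at the cost of enlarging $C_2$), and the second is a Gr\"onwall term with integrable coefficient $\|\triangle u_0\|_{L^2}^2 \in L^1(0,T)$ from the regularity of $u_0$. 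The cubic remainder, after Taylor expansion, contributes terms controlled by $\|Z_\varepsilon\|_{H^1}^2$ times $L^\infty$-in-time bounds on $u_0, u_\varepsilon, V_0$ (using the interpolation (\ref{equ2.2})) plus genuinely small terms carrying $\sqrt{\varepsilon}$ and the a priori bounds on $V_\varepsilon$. For the stochastic term I would use the Burkholder--Davis--Gundy inequality (\ref{equ2.7}) together with (\ref{equ2.9}): $\mathbb{E}\sup_{s\le t}|\int_0^s (\nabla(Z_\varepsilon)\times\ldots)\,d\mathcal{W}|$ type terms split into a small multiple of $\mathbb{E}\sup_{s\le t}\|\nabla Z_\varepsilon\|_{L^2}^2$ (absorbed to the left) and $C\mathbb{E}\int_0^t \|Z_\varepsilon\|_{H^1}^2\,ds$; crucially the noise coefficient $(u_\varepsilon - u_0)\times G - $ (linearization) equals $(u_\varepsilon - u_0)\times G - 0$ at this order since the noise in (\ref{equ1.5}) is exactly $u_0\times Gd\mathcal{W}$ and the noise in $V_\varepsilon$'s equation is $u_\varepsilon \times Gd\mathcal{W}$, so the difference coefficient is $(u_\varepsilon - u_0)\times G$, whose $L_Q$-norm squared is $\le C\|u_\varepsilon - u_0\|_{H^1}^2 \to 0$ in $L^2(0,T)$ — this is where the strong convergence feeds in as a true forcing term, not just a Gr\"onwall term.

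Collecting everything yields, for a constant depending only on the data,
\begin{eqnarray*}
\mathbb{E}\Big(\sup_{s\le t}\|\nabla Z_\varepsilon(s)\|_{L^2}^2\Big) + \nu_1\mathbb{E}\int_0^t \|\triangle Z_\varepsilon\|_{L^2}^2\,ds \le C\int_0^t \phi(s)\,\mathbb{E}\Big(\sup_{r\le s}\|\nabla Z_\varepsilon(r)\|_{L^2}^2\Big)\,ds + \eta(\varepsilon),
\end{eqnarray*}
with $\phi \in L^1(0,T)$ and $\eta(\varepsilon)\to 0$ as $\varepsilon\to 0$; Gr\"onwall's lemma then gives the claimed limit. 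The main obstacle I anticipate is twofold: first, obtaining the uniform-in-$\varepsilon$ a priori bounds on $V_\varepsilon$ in $L^2(\Omega; L^\infty(0,T;H^1)\cap L^2(0,T;H^2))$ needed to make the $\sqrt{\varepsilon}$-small terms in $\mathcal{R}_\varepsilon$ genuinely vanish — this requires a separate energy estimate on the $V_\varepsilon$ equation itself, using the same Lemma \ref{lem2.1} orthogonality and absorption tricks, and must be done carefully since $V_\varepsilon$ solves a quasilinear system with the rough forcing $\sqrt{\varepsilon}V_\varepsilon\times\triangle V_\varepsilon$; and second, controlling the high-order nonlinear term $\gamma\sqrt{\varepsilon}\,V_\varepsilon\times\triangle V_\varepsilon$ when tested against $\triangle Z_\varepsilon$, where one must pay with $\|\triangle V_\varepsilon\|_{L^2}$ and $\|\triangle Z_\varepsilon\|_{L^2}$ simultaneously — here the factor $\sqrt{\varepsilon}$ is essential to close, combined with Young's inequality to split off a controllable share of the $\triangle Z_\varepsilon$ dissipation and an $L^1$-in-time remainder bounded using the uniform $L^2(0,T;H^2)$ estimate on $V_\varepsilon$.
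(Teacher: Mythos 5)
Your strategy is essentially the paper's: set $V_\varepsilon=(u_\varepsilon-u_0)/\sqrt{\varepsilon}$, obtain uniform-in-$\varepsilon$ bounds for $V_\varepsilon$, apply It\^o's formula to $\|\nabla(V_\varepsilon-V_0)\|_{L^2}^2$, kill the leading cross term with the orthogonality in Lemma \ref{lem2.1}, control $(V_\varepsilon-V_0)\times\triangle u_\varepsilon$ and the $\sqrt{\varepsilon}$-remainders by Young's inequality and BDG, and close with Gronwall. However, there are two places where the plan as written does not close. First, the uniform bound you ask for, $V_\varepsilon$ in $L^2(\Omega;L^\infty(0,T;H^1)\cap L^2(0,T;H^2))$, is not reachable by the energy estimate you sketch: the Gronwall coefficient inevitably contains $\|\triangle u_\varepsilon\|_{L^2}^2$ (from testing $V_\varepsilon\times\triangle u_\varepsilon$ against $\triangle V_\varepsilon$), a random quantity controlled only in expectation, so you cannot pass expectations through the exponential Gronwall factor. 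The same issue affects your final display: your $\phi$ is random, so an inequality with $\mathbb{E}\sup$ inside the time integral is not what the estimate delivers. The paper resolves this by localization: it introduces the stopping time $\tau_R$ built from $\sup_s\|\nabla u_\varepsilon\|_{L^2}^2$ and $\int\|\triangle u_\varepsilon\|_{L^2}^2$, proves the uniform $V_\varepsilon$ bound only up to $\tau_R$ with a constant depending on $R$ (Lemma \ref{lem5.1}), proves the convergence of $V_\varepsilon-V_0$ up to $\tau_R\wedge\tau_N$ with the explicit bound $\varepsilon\,C(R,T)e^{C(R,T)}$ in (\ref{equ5.27}), and only afterwards removes the stopping times by dominated convergence. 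You need this (or an equivalent localization) for the argument to be rigorous.

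Second, you rely on the qualitative strong convergence $u_\varepsilon\to u_0$ in $L^2(0,T;H^1)$ from the LDP section to make the noise coefficient $(u_\varepsilon-u_0)\times G$ and the It\^o correction term vanish. Convergence in probability without a rate does not by itself give that the expectations of these terms tend to zero (one would still need uniform integrability), and it is also unnecessary: since $u_\varepsilon-u_0=\sqrt{\varepsilon}\,V_\varepsilon$ identically, the $L_Q$-norm of the noise coefficient is bounded by $C\sqrt{\varepsilon}\|V_\varepsilon\|_{H^1}$, and the localized uniform bound on $V_\varepsilon$ converts every remainder (including $\sqrt{\varepsilon}\,V_\varepsilon\times\triangle V_\varepsilon$, which you treat correctly) into an explicit $O(\varepsilon)$ quantity; this is exactly how the paper closes, without invoking Section \ref{sec4} at all. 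Finally, you use $V_0$ and its $L^\infty(0,T;H^1)\cap L^2(0,T;H^2)$ regularity throughout without establishing it; the paper proves existence, uniqueness and this regularity for the limiting linear SPDE separately (Proposition \ref{pro5.1}), and that ingredient should not be skipped.
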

We have reserved the details on the notation used above for Sections \ref{sec3}, \ref{sec4}, \ref{sec5}.

\maketitle
\section{Well-posedness of the stochastic system}\label{sec3}
In this section, we aim to show the LDP for the solution $u_{\varepsilon}$ of system (\ref{equ1.4}) as $\varepsilon\rightarrow 0$. We first show the existence and uniqueness of solution to the following stochastic controlled LLB equation:
\begin{eqnarray}\label{equ3.1}
du=\nu_{1}\triangle u dt+ \gamma u\times \triangle udt-\nu_{2}(1+\mu|u|^{2})udt+\sqrt{\varepsilon}u\times Gd\mathcal{W}+(u\times G)hdt,
\end{eqnarray}
where $h$ is an $\mathcal{H}_{0}$-valued predictable stochastic process satisfying $\int_{0}^{T}\|h\|_{\mathcal{H}_0}^{2}dt<\infty$, a.s. $\mathcal{H}_{0}$ is a Hilbert space defined by $\mathcal{H}_{0}:=Q^{\frac{1}{2}}H$, with the induced norm $\|\cdot\|_{\mathcal{H}_0}^{2}=\langle\cdot,\cdot\rangle_{\mathcal{H}_{0}}=(Q^{-\frac{1}{2}}\cdot,Q^{-\frac{1}{2}}\cdot)$.

For any fixed $M>0$, we define the set
\begin{eqnarray*}
S_{M}=\left\{h\in L^{2}(0,T;\mathcal{H}_{0}):\int_{0}^{T}\|h\|_{\mathcal{H}_0}^{2}dt\leq M\right\}.
\end{eqnarray*}
The set $S_{M}$ endows with the weak topology
$$
d(h,g)=\sum_{k\geq 1}\frac{1}{2^{k}}\left|\int_{0}^{T}\langle h(t)-g(t), \xi_{k}\rangle_{\mathcal{H}_{0}}dt\right|,
$$
for $g, h\in S_M$, which is a Polish space and $\{\xi_{k}\}_{k\geq 1}$ is an orthonormal basis of $L^{2}(0,T;\mathcal{H}_{0})$. For $M>0$, define $\mathcal{A}_{M}=\{h\in \mathcal{A}:h(\omega)\in S_{M}, {\rm a.s.}\}$, where $\mathcal{A}$ is the set of the process $h$.

\subsection{Global existence of the solution}\label{sec31}

By the Yamada-Watanabe argument, the strong pathwise solution follows once we show the existence of martingale solution and the uniqueness of the pathwise solution. The rigorous proof of the existence of the martingale solution bases on the Galerkin approximation, the compactness argument, and the identification of limit. Here we just state the necessary a priori estimates since the extra extend term $(u\times G)h$ and the noise coefficient $\varepsilon$ appear in system (\ref{equ3.1}). For the technical detail of the compactness argument and passing the limit, we refer to \cite{Le}, where the analysis was implemented for the original system (\ref{equ1.3}).

In general, these calculations are performed in the following Lemma on the Galerkin approximate solutions, then the estimation of $u_\varepsilon$ shall be obtained by a limiting procedure.
\begin{lemma}\label{lem3.1} Let $h\in \mathcal{A}_{M}$ for any fixed $M>0$. Assume that the initial data $u(0)\in L^{2p}(\Omega; H^{1})$ and the operator $G$ satisfies the Condition (\ref{equ1.2}), then for $\varepsilon\in (0,1]$ and $p\geq 1$, the solution $u_\varepsilon$ of system (\ref{equ1.4}) satisfies the following a priori estimates, for $t\in [0,T]$
\begin{eqnarray}\label{equ3.2}
&&\sup_{\varepsilon\in (0,1]}\mathbb{E}\left(\sup_{s\in [0,t]}\|\nabla u_\varepsilon\|_{L^{2}}^{2p}\right)+\nu_{1}\sup_{\varepsilon\in (0,1]}\mathbb{E}\int_{0}^{t}\|\nabla u_\varepsilon\|_{L^{2}}^{2p-2}\|\triangle u_\varepsilon\|_{L^{2}}^{2}ds\nonumber\\
&&+\nu_{2}\sup_{\varepsilon\in (0,1]}\mathbb{E}\int_{0}^{t}\|\nabla u_\varepsilon\|_{L^{2}}^{2p-2}(\|\nabla u_\varepsilon\|_{L^2}^{2}+\mu\||u_\varepsilon|\cdot |\nabla u_\varepsilon|\|_{L^2}^{2})ds\leq C,
\end{eqnarray}
and
\begin{eqnarray}
&&\sup_{\varepsilon\in (0,1]}\mathbb{E}\left\|u_\varepsilon-\sqrt{\varepsilon}\int_{0}^{t}u_\varepsilon\times Gd\mathcal{W}\right\|_{W^{1,2}(0,T;L^2)}^2\leq C,\label{equ3.3}\\
&&\mathbb{E}\left\|\sqrt{\varepsilon}\int_{0}^{t}u_\varepsilon\times Gd\mathcal{W}\right\|_{W^{\alpha,p}(0,T;L^2)}^p\leq C,\;\mbox{for}\; p>2, \alpha\in [0, \frac{1}{2}), \label{equ3.4}
\end{eqnarray}
where the constant $C$ is independent of $\varepsilon$ but depends on $M, T, \mathcal{D}, p$ and the initial data.
\end{lemma}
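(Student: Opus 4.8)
The plan is to derive the a priori bounds \eqref{equ3.2}--\eqref{equ3.4} first at the level of the finite–dimensional Galerkin approximations of the stochastic controlled equation \eqref{equ3.1}, with all constants uniform in the dimension of the approximation, in $\varepsilon\in(0,1]$ and over $h\in\mathcal A_M$, and then to pass to the limit using the weak and weak-$*$ lower semicontinuity of the norms involved; the accompanying compactness and limit–identification arguments are those of \cite{Le} and are not reproduced. I shall use freely the Poincar\'e equivalences $\|u\|_{H^1}\simeq\|\nabla u\|_{L^2}$ and $\|u\|_{H^2}\simeq\|\triangle u\|_{L^2}$, the $1$-D interpolation \eqref{equ2.2}, the bound \eqref{equ2.9} and its $L^2$-counterpart $\|u\times G\|_{L_Q(\mathcal H_0;L^2)}\le C\|u\|_{H^1}$.

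\emph{The energy estimate \eqref{equ3.2}.} Apply It\^o's formula to $\|\nabla u_\varepsilon\|_{L^2}^{2p}$ (first to $\|\nabla u_\varepsilon\|_{L^2}^2=(-\triangle u_\varepsilon,u_\varepsilon)$, then composing with $x\mapsto x^p$). After integration by parts the drift of \eqref{equ3.1} is treated term by term: $-(\triangle u_\varepsilon,\nu_1\triangle u_\varepsilon)=-\nu_1\|\triangle u_\varepsilon\|_{L^2}^2$ is pure dissipation; the gyroscopic term drops out, $(\triangle u_\varepsilon,u_\varepsilon\times\triangle u_\varepsilon)=0$, since $a\cdot(a\times b)=0$ pointwise; the cubic damping produces, after one more integration by parts,
\[
2\nu_2\big(\triangle u_\varepsilon,(1+\mu|u_\varepsilon|^2)u_\varepsilon\big)=-2\nu_2\|\nabla u_\varepsilon\|_{L^2}^2-2\nu_2\mu\big\||u_\varepsilon|\cdot|\nabla u_\varepsilon|\big\|_{L^2}^2-4\nu_2\mu\|u_\varepsilon\cdot\nabla u_\varepsilon\|_{L^2}^2,
\]
whose first two negative terms are exactly the dissipative quantities on the left of \eqref{equ3.2} and whose last term is simply discarded; and the control term obeys $|(\triangle u_\varepsilon,(u_\varepsilon\times G)h)|\le C\|\triangle u_\varepsilon\|_{L^2}\|u_\varepsilon\|_{H^1}\|h\|_{\mathcal H_0}\le\tfrac{\nu_1}{2}\|\triangle u_\varepsilon\|_{L^2}^2+C\|h\|_{\mathcal H_0}^2\|u_\varepsilon\|_{H^1}^2$, the first half absorbed by the dissipation. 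All It\^o corrections generated by the noise $\sqrt\varepsilon\,u_\varepsilon\times G\,d\mathcal W$ are, by \eqref{equ2.9} and $\varepsilon\le1$, bounded by $C\|\nabla u_\varepsilon\|_{L^2}^{2p}$. Taking expectations (the It\^o integral is a zero–mean martingale) and applying Gronwall's lemma with the $L^1(0,T)$ time–coefficient $C(1+\|h(t)\|_{\mathcal H_0}^2)$ — integrable thanks to $\int_0^T\|h\|_{\mathcal H_0}^2\,dt\le M$ — gives $\sup_{t\le T}\mathbb E\|\nabla u_\varepsilon(t)\|_{L^2}^{2p}\le C$ together with the two time–integral bounds. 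To move the supremum inside the expectation, estimate the martingale term by \eqref{equ2.7}, dominate it by $\tfrac12\mathbb E\sup_{t\le T}\|\nabla u_\varepsilon\|_{L^2}^{2p}+C\,\mathbb E\int_0^T\|\nabla u_\varepsilon\|_{L^2}^{2p}\,ds$, and absorb the first term on the left.

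\emph{The fractional–time bounds \eqref{equ3.3}--\eqref{equ3.4}.} For \eqref{equ3.4} apply \eqref{equ2.8} with $f=\sqrt\varepsilon\,u_\varepsilon\times G$: the right-hand side is $\le C\varepsilon^{p/2}\,\mathbb E\int_0^T\|u_\varepsilon\times G\|_{L_Q(\mathcal H_0;L^2)}^p\,dt\le C\,\mathbb E\int_0^T\|u_\varepsilon\|_{H^1}^p\,dt\le CT\,\mathbb E\sup_{t\le T}\|\nabla u_\varepsilon\|_{L^2}^p$, finite by \eqref{equ3.2} (take an integer $p_0\ge p/2$). For \eqref{equ3.3} put $Y_\varepsilon(t):=u_\varepsilon(t)-\sqrt\varepsilon\int_0^t u_\varepsilon\times G\,d\mathcal W=u(0)+\int_0^t\big(\nu_1\triangle u_\varepsilon+\gamma u_\varepsilon\times\triangle u_\varepsilon-\nu_2(1+\mu|u_\varepsilon|^2)u_\varepsilon+(u_\varepsilon\times G)h\big)\,ds$, so $Y_\varepsilon$ is absolutely continuous into $L^2$ with $Y_\varepsilon'$ equal to the integrand, and $\mathbb E\|Y_\varepsilon\|_{W^{1,2}(0,T;L^2)}^2=\mathbb E\int_0^T\big(\|Y_\varepsilon\|_{L^2}^2+\|Y_\varepsilon'\|_{L^2}^2\big)\,dt$. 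Each summand is controlled by \eqref{equ3.2}: $\|Y_\varepsilon\|_{L^2}\le\|u_\varepsilon\|_{L^2}+\sqrt\varepsilon\|\int_0^t u_\varepsilon\times G\,d\mathcal W\|_{L^2}$ (the latter by \eqref{equ2.7} and \eqref{equ2.9}); $\|\gamma u_\varepsilon\times\triangle u_\varepsilon\|_{L^2}\le C\|u_\varepsilon\|_{H^1}\|\triangle u_\varepsilon\|_{L^2}$ with $\mathbb E\int_0^T\|u_\varepsilon\|_{H^1}^2\|\triangle u_\varepsilon\|_{L^2}^2\,dt\le C$ from \eqref{equ3.2} with $p=2$; $\|\nu_2(1+\mu|u_\varepsilon|^2)u_\varepsilon\|_{L^2}\le C(\|u_\varepsilon\|_{H^1}+\|u_\varepsilon\|_{H^1}^3)$ by the $1$-D embedding $H^1\hookrightarrow L^\infty$, bounded via \eqref{equ3.2} with $p=1,3$; and $\|(u_\varepsilon\times G)h\|_{L^2}\le C\|u_\varepsilon\|_{H^1}\|h\|_{\mathcal H_0}$, whence $\mathbb E\int_0^T\|(u_\varepsilon\times G)h\|_{L^2}^2\,dt\le M\,\mathbb E\sup_{t\le T}\|u_\varepsilon\|_{H^1}^2\le C$.

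\emph{The main obstacle.} The crux is the energy estimate \eqref{equ3.2}. Beyond the structural cancellation of the gyroscopic term against $-\triangle u_\varepsilon$ and the favourable sign of the cubic damping after integration by parts, the delicate point is to dominate the control term $(u_\varepsilon\times G)h$ and all the It\^o corrections by a small multiple of the $\nu_1\|\triangle u_\varepsilon\|_{L^2}^2$-dissipation plus a Gronwall term whose time–coefficient lies in $L^1(0,T)$ \emph{uniformly} over $h\in\mathcal A_M$ and $\varepsilon\in(0,1]$; this uniformity — which rests precisely on the constraint $\int_0^T\|h\|_{\mathcal H_0}^2\,dt\le M$ and on $\varepsilon\le1$ — is exactly what could not be recovered afterwards from \eqref{equ1.4} by a Girsanov change of measure. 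The remaining steps (the Burkholder--Davis--Gundy ``absorb–the–supremum'' trick for the $2p$-th moments, the legitimacy of the It\^o computation on the finite–dimensional system, and its survival of the limit) are routine.
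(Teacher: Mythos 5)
Your proposal is correct and takes essentially the same route as the paper: It\^o's formula applied to $\|\nabla u_\varepsilon\|_{L^2}^{2p}$ at the Galerkin level, the gyroscopic cancellation $(u\times\triangle u,\triangle u)=0$, the favourable sign of the cubic term after integration by parts, H\"older/Young bounds for the control term and the It\^o corrections via \eqref{equ2.9} and $h\in\mathcal A_M$, the Burkholder--Davis--Gundy absorption of the supremum, Gronwall, and then termwise bounds of the drift and stochastic parts for \eqref{equ3.3}--\eqref{equ3.4}. The only minor differences are that the paper localizes with a stopping time $\tau_R$ before taking expectations (a step you should keep, or at least invoke, to justify treating the stochastic integral as a zero-mean martingale before the moment bound is known) and records the cubic identity with the combined coefficient $-3\mu(|u|^2,|\nabla u|^2)$, whereas you retain and then discard the additional nonpositive term $-4\nu_2\mu\|u\cdot\nabla u\|_{L^2}^2$, which changes nothing in \eqref{equ3.2}.
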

\begin{proof} For simplify the notation, we replace $u_\varepsilon$ by $u$. Using the It\^{o} formula to the function $\|\nabla u\|_{L^{2}}^{2p}$,
\begin{eqnarray}\label{equ3.5}
&&d\|\nabla u\|_{L^{2}}^{2p}+2p\nu_{1}\|\nabla u\|_{L^{2}}^{2p-2}\|\triangle u\|_{L^{2}}^{2}dt=-2p\gamma \|\nabla u\|_{L^{2}}^{2p-2}(u\times \triangle u, \triangle u)dt\nonumber\\
&&\quad+2p\nu_{2}\|\nabla u\|_{L^{2}}^{2p-2}((1+\mu|u|^{2})u, \triangle u)dt-2p\sqrt{\varepsilon}\|\nabla u\|_{L^{2}}^{2p-2}(u\times G, \triangle u)d\mathcal{W}\nonumber\\
&&\quad-2p\|\nabla u\|_{L^{2}}^{2p-2}((u\times G)h, \triangle u)dt+\varepsilon p\|\nabla u\|_{L^{2}}^{2p-2}\|u\times G\|_{L_{Q}(\mathcal{H}_0;H^1)}^{2}dt\nonumber\\
&&\quad+2p(p-1)\|\nabla u\|_{L^{2}}^{2p-4}(u\times G, \triangle u)^2dt.
\end{eqnarray}
A simple calculation gives
\begin{eqnarray}\label{equ3.6}
((1+\mu|u|^{2})u, \triangle u)=-\|\nabla u\|_{L^2}^2-3\mu(|u|^{2}, |\nabla u|^{2}),
\end{eqnarray}
and by Lemma \ref{lem2.1} (\ref{equ2.3}), we have
\begin{eqnarray}\label{equ3.7}
(u\times \triangle u, \triangle u)=0.
\end{eqnarray}
Define the stopping time $\tau_{R}$ by
\begin{eqnarray*}
\tau_{R}=\inf\left\{t\geq 0; \sup_{s\in[0,t] }\|\nabla u\|_{L^{2}}^{2p}\geq R\right\}.
\end{eqnarray*}
If the set is empty, we take $\tau_R=T$. Taking the supremum on interval $[0,t\wedge \tau_{R}]$ in (\ref{equ3.5}), and then taking expectation we have
\begin{eqnarray*}
&&\mathbb{E}\left(\sup_{s\in [0,t\wedge \tau_{R}]}\|\nabla u\|_{L^{2}}^{2p}\right)+2p\nu_{1}\mathbb{E}\int_{0}^{t\wedge \tau_{R}}\|\nabla u\|_{L^{2}}^{2p-2}\|\triangle u\|_{L^{2}}^{2}ds\nonumber\\
&&+2p\nu_{2}\mathbb{E}\int_{0}^{t\wedge \tau_{R}}\|\nabla u\|_{L^{2}}^{2p-2}(\|\nabla u\|_{L^2}^{2}+3\mu\||u|\cdot |\nabla u|\|_{L^2}^{2})ds\\
&&\leq \mathbb{E}\|\nabla u(0)\|_{L^{2}}^{2p}+2\sqrt{\varepsilon}p\mathbb{E}\left(\sup_{s\in [0,t\wedge \tau_{R})}\left|\int_{0}^{s}\|\nabla u\|_{L^{2}}^{2p-2}(u\times G, \triangle u)d\mathcal{W}\right|\right)\\
&&\quad+\varepsilon p\mathbb{E}\int_{0}^{t\wedge \tau_{R}}\|\nabla u\|_{L^{2}}^{2p-2}\|u\times G\|_{L_{Q}(\mathcal{H}_0;H^1)}^{2}ds\\
&&\quad+2p\mathbb{E}\int_{0}^{t\wedge\tau_{R}}\|\nabla u\|_{L^{2}}^{2p-2}|((u\times G)h, \triangle u)|ds\nonumber\\
&&\quad+2p(p-1)\mathbb{E}\int_{0}^{t\wedge\tau_{R}}\|\nabla u\|_{L^{2}}^{2p-4}(u\times G, \triangle u)^2ds.
\end{eqnarray*}
Regarding the stochastic term, by the Burkholder-Davis-Gundy inequality (\ref{equ2.8}) and estimate (\ref{equ2.9}),
\begin{eqnarray}\label{equ3.8}
&&2p\sqrt{\varepsilon}\mathbb{E}\left(\sup_{s\in [0,t\wedge \tau_{R}]}\left|\int_{0}^{s}\|\nabla u\|_{L^{2}}^{2p-2}(u\times G, \triangle u)d\mathcal{W}\right|\right)\nonumber\\
&&\leq p \sqrt{\varepsilon}C\mathbb{E}\left(\int_{0}^{t\wedge \tau_{R}}\|\nabla u\|_{L^{2}}^{4p-4}(u\times G, \triangle u)^{2}ds\right)^\frac{1}{2}\nonumber\\
&&\leq p\sqrt{\varepsilon}C\mathbb{E}\left(\int_{0}^{t\wedge \tau_{R}}\|\nabla u\|_{L^{2}}^{4p-2}\|u\times G\|_{L_{Q}(\mathcal{H}_0;H^1)}^{2}ds\right)^\frac{1}{2}\nonumber\\ &&\leq \frac{1}{2}\mathbb{E}\left(\sup_{s\in [0,t\wedge \tau_{R}]}\|\nabla u\|_{L^{2}}^{2p}\right)+C\varepsilon\mathbb{E}\int_{0}^{t\wedge \tau_{R}}\|\nabla u\|_{L^{2}}^{2p}ds.
\end{eqnarray}
By the H\"{o}lder inequality and (\ref{equ2.9}),
\begin{eqnarray}\label{equ3.9}
\varepsilon p\mathbb{E}\int_{0}^{t\wedge \tau_{R}}\|\nabla u\|_{L^{2}}^{2p-2}\|u\times G\|_{L_{Q}(\mathcal{H}_0;H^1)}^{2}dt\leq C\varepsilon\mathbb{E}\int^{t\wedge \tau_{R}}_{0}\|\nabla u\|_{L^{2}}^{2p}ds,
\end{eqnarray}
\begin{eqnarray}\label{equ3.10}
&&2p\mathbb{E}\int_{0}^{t\wedge \tau_{R}}\|\nabla u\|_{L^{2}}^{2p-2}((u\times G)h, \triangle u)ds\nonumber\\
&&\leq C\mathbb{E}\int_{0}^{t\wedge \tau_{R}}\|\nabla u\|_{L^{2}}^{2p-1}\|h\|_{\mathcal{H}^{0}}\|u\times G\|_{L_{Q}(\mathcal{H}_0;H^1)}ds\nonumber\\
&&\leq C\mathbb{E}\int_{0}^{t\wedge \tau_{R}}\|\nabla u\|_{L^{2}}^{2p}\|h\|_{\mathcal{H}^{0}}ds,
\end{eqnarray}
and
\begin{eqnarray}\label{equ3.11}
&&2p(p-1)\mathbb{E}\int_{0}^{t\wedge\tau_{R}}\|\nabla u\|_{L^{2}}^{2p-4}(u\times G, \triangle u)^2ds\nonumber\\
&&\leq C(p)\mathbb{E}\int_{0}^{t\wedge\tau_{R}}\|\nabla u\|_{L^{2}}^{2p-2}\|u\times G\|_{L_{Q}(\mathcal{H}_0;H^1)}^{2}ds\nonumber\\
&&\leq C(p)\mathbb{E}\int_{0}^{t\wedge\tau_{R}}\|\nabla u\|_{L^{2}}^{2p}ds.
\end{eqnarray}
Combining the estimates (\ref{equ3.8})-(\ref{equ3.11}), by the Gronwall Lemma, we have
\begin{eqnarray*}
\sup_{\varepsilon\in(0,1]}\mathbb{E}\left(\sup_{s\in [0,t\wedge \tau_{R}]}\|\nabla u\|_{L^{2}}^{2p}\right)+\nu_{1}\sup_{\varepsilon\in(0,1]}\mathbb{E}\int_{0}^{t\wedge \tau_{R}}\|\nabla u\|_{L^{2}}^{2p-2}\|\triangle u\|_{L^{2}}^{2}ds\\+\nu_{2}\sup_{\varepsilon\in(0,1]}\mathbb{E}\int_{0}^{t\wedge \tau_{R}}\|\nabla u\|_{L^{2}}^{2p-2}(\|\nabla u\|_{L^2}^{2}+3\mu\||u|\cdot |\nabla u|\|_{L^2}^{2})ds\leq C,
\end{eqnarray*}
where the constant $C$ depends on $\mathcal{D}, M,T,p$ but independent of $\varepsilon$. Then, as $R\rightarrow \infty$, by the dominated convergence theorem,
\begin{eqnarray*}
\sup_{\varepsilon\in(0,1]}\mathbb{E}\left(\sup_{s\in [0,t]}\|\nabla u\|_{L^{2}}^{2p}\right)+\nu_{1}\sup_{\varepsilon\in(0,1]}\mathbb{E}\int_{0}^{t}\|\nabla u\|_{L^{2}}^{2p-2}\|\triangle u\|_{L^{2}}^{2}ds\\+\nu_{2}\sup_{\varepsilon\in(0,1]}\mathbb{E}\int_{0}^{t}\|\nabla u\|_{L^{2}}^{2p-2}(\|\nabla u\|_{L^2}^{2}+3\mu\||u|\cdot |\nabla u|\|_{L^2}^{2})ds\leq C,
\end{eqnarray*}
where the constant $C(\mathcal{D}, M,T,p)$ is independent of $\varepsilon$.

Next, we show the inequalities (\ref{equ3.3}) and (\ref{equ3.4}). Integrating (\ref{equ3.1}) we have
\begin{eqnarray*}
&&u=u(0)+\sum_{i=1}^5J_i, \mbox{where}\;J_1=\nu_1\int_{0}^{t}\triangle u ds,~ J_2=\gamma\int_{0}^{t} u\times \triangle uds,\\
&&J_{3}=\nu_{2}\int_{0}^{t}(1+\mu|u|^{2})uds,~J_{4}=\sqrt{\varepsilon}\int_{0}^{t}u\times Gd\mathcal{W}, ~J_{5}=\int_{0}^{t}(u\times G)hds.
\end{eqnarray*}
By the estimates (\ref{equ3.2}) and the interpolation inequality (\ref{equ2.2}), we easily get
\begin{eqnarray*}
\sup_{\varepsilon\in (0,1]}\mathbb{E}\|J_{1}+J_{2}\|_{W^{1,2}(0,T;L^2)}^2\leq C,
\end{eqnarray*}
and
\begin{align*}
\sup_{\varepsilon\in (0,1]}\mathbb{E}\|J_{3}\|_{W^{1,2}(0,T;L^2)}^2&\leq C\sup_{\varepsilon\in (0,1]}\mathbb{E}\int_{0}^{T}\|(1+\mu|u|^2)u\|_{L^2}^2dt\\
&\leq C\sup_{\varepsilon\in (0,1]}\mathbb{E}\int_{0}^{T}\|u\|_{H^1}^6dt\leq C.
\end{align*}
Regarding the stochastic term, by the condition (\ref{equ1.2}), the Minkowski inequality and the Burkholder-Davis-Gundy inequality (\ref{equ2.8}), we have
\begin{align*}
\sup_{\varepsilon\in (0,1]}\mathbb{E}\|J_{4}\|_{W^{\alpha,p}(0,T;L^2)}^p&\leq C\mathbb{E}\int_{0}^{T}\|u\times G\|_{L_{Q}(\mathcal{H}_0;L^{2})}^pdt\\
&\leq C\mathbb{E}\int_{0}^{T}\left(\sum_{k\geq 1}\|u\times Ge_k\|^2_{L^2}\right)^\frac{p}{2}dt\\ &\leq C\mathbb{E}\int_{0}^{T}\int_{\mathcal{D}}\left(\sum_{k\geq 1}|u\times Ge_k|^2\right)^\frac{p}{2}dxdt\\
&\leq C\mathbb{E}\int_{0}^{T}\|u\|_{H^1}^pdt\leq C.
\end{align*}
By the fact that $h\in \mathcal{A}_M$ and the condition (\ref{equ1.2}) again, we have
\begin{align*}
\sup_{\varepsilon\in (0,1]}\mathbb{E}\|J_{5}\|_{W^{1,2}(0,T;L^2)}^2&\leq C\mathbb{E}\int_{0}^{T}\|(u\times G)h\|_{L^2}^2dt\\
&\leq C\mathbb{E}\int_{0}^{T}\|(u\times G)\|_{L_{Q}(\mathcal{H}_0;L^{2})}^2\|h\|_{\mathcal{H}_0}^2dt\\ &\leq C.
\end{align*}
This completes the proof of Lemma \ref{lem3.1}.
\end{proof}

\subsection{Uniqueness of the solution}\label{sec32}

\begin{lemma} The pathwise uniqueness of the solution holds in the following sense: suppose that $u_{1}$ and $u_{2}$ are strong pathwise solutions of system (\ref{equ3.1}). If  $\mathbb{P}\{u_{1}(0)=u_{2}(0)\}=1$, then we have
\begin{eqnarray*}
\mathbb{P}\{u_{1}(t,x)=u_{2}(t,x);\forall t\in[0,T]\}=1.
\end{eqnarray*}
\end{lemma}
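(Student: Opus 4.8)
The plan is to run a difference estimate on $w:=u_1-u_2$ and close it by a pathwise Gronwall argument. Subtracting the two copies of \eqref{equ3.1} (recall the noise is linear in $u$, so the diffusion difference is $\sqrt{\varepsilon}(w\times G)\,d\mathcal{W}$), and using the algebraic identities
$$u_1\times\triangle u_1-u_2\times\triangle u_2=u_1\times\triangle w+w\times\triangle u_2,\qquad (1+\mu|u_1|^2)u_1-(1+\mu|u_2|^2)u_2=w+\mu\big(|u_1|^2w+(w\cdot(u_1+u_2))u_2\big),$$
one finds that $w$ solves, with $w(0)=0$,
\begin{align*}
dw&=\nu_1\triangle w\,dt+\gamma\big(u_1\times\triangle w+w\times\triangle u_2\big)\,dt\\
&\quad-\nu_2\big(w+\mu|u_1|^2w+\mu(w\cdot(u_1+u_2))u_2\big)\,dt+\sqrt{\varepsilon}(w\times G)\,d\mathcal{W}+((w\times G)h)\,dt.
\end{align*}
Since $u_1,u_2\in\mathcal{C}([0,T];H^1)\cap L^2(0,T;H^2)$ a.s., the drift of $w$ lies in $L^2(0,T;L^2)$ and $w\in L^2(0,T;H^2)$, so the It\^o formula for $\|w\|_{L^2}^2$ is available (in the Gelfand triple $H^1\subset L^2\subset H^{-1}$, or on the Galerkin level and then passing to the limit as in the proof of Lemma \ref{lem3.1}).

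Applying It\^o to $\|w(t)\|_{L^2}^2$, the key observation is that the stochastic integral vanishes identically: $(w\times\widetilde G_k)\cdot w=0$ pointwise, so $\sum_k(w,w\times\widetilde G_k)_{L^2}\,d\mathcal{W}_k\equiv0$, while the It\^o correction is bounded by $\varepsilon\|w\times G\|_{L_Q(\mathcal{H}_0;L^2)}^2\le C\varepsilon\|w\|_{L^2}^2$. Using $(w\times\triangle u_2,w)=0$ (again $(a\times b)\cdot b=0$) and discarding the favourable $-2\nu_2\|w\|_{L^2}^2$, one is left with the \emph{pathwise} differential inequality
$$\frac{d}{dt}\|w\|_{L^2}^2+2\nu_1\|\nabla w\|_{L^2}^2\le 2\gamma\big|(u_1\times\triangle w,w)\big|+2\nu_2\mu\big|\big(|u_1|^2w+(w\cdot(u_1+u_2))u_2,\,w\big)\big|+2\big|((w\times G)h,w)\big|+C\varepsilon\|w\|_{L^2}^2.$$

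For the gradient term I would integrate by parts and invoke $(a\times b)\cdot b=0$ once more to get $(u_1\times\triangle w,w)=-(\nabla u_1\times\nabla w,w)$; then H\"older, the interpolation inequality \eqref{equ2.2}, and Young's inequality give $2\gamma|(\nabla u_1\times\nabla w,w)|\le\nu_1\|\nabla w\|_{L^2}^2+C\big(\|\nabla u_1\|_{L^2}^2+\|\nabla u_1\|_{L^2}^4\big)\|w\|_{L^2}^2$, the top-order part being absorbed into the dissipation. For the cubic term, the embedding $H^1\hookrightarrow L^\infty$ (through \eqref{equ2.2}) bounds it by $C\big(\|u_1\|_{L^\infty}^2+\|u_2\|_{L^\infty}^2\big)\|w\|_{L^2}^2$. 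For the control term, $|((w\times G)h,w)|\le\|w\times G\|_{L_Q(\mathcal{H}_0;L^2)}\|h\|_{\mathcal{H}_0}\|w\|_{L^2}\le C\|h\|_{\mathcal{H}_0}\|w\|_{L^2}^2$. Collecting everything yields $\frac{d}{dt}\|w\|_{L^2}^2\le\Phi(t)\|w\|_{L^2}^2$ with $\Phi(t)=C\big(1+\|u_1\|_{H^1}^4+\|u_2\|_{H^1}^2+\|h(t)\|_{\mathcal{H}_0}\big)$, which for a.e. $\omega$ lies in $L^1(0,T)$ since $u_1,u_2\in\mathcal{C}([0,T];H^1)$ and $\int_0^T\|h\|_{\mathcal{H}_0}^2\,dt<\infty$ (Cauchy--Schwarz). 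As $w(0)=0$, Gronwall's lemma forces $\|w(t)\|_{L^2}=0$ for all $t\in[0,T]$ almost surely, i.e. $u_1\equiv u_2$.

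The point requiring care is that the Gronwall coefficient $\Phi$ is random and merely a.s. finite (not bounded uniformly in $\omega$); the cancellation $(w\times\widetilde G_k)\cdot w=0$ is exactly what rescues the argument, since it removes the martingale term and turns the estimate into a genuine (random) ODE inequality to which pathwise Gronwall applies with no localization needed — otherwise one would have to introduce stopping times $\tau_N=\inf\{t:\|u_1(t)\|_{H^1}+\|u_2(t)\|_{H^1}\ge N\}\wedge\inf\{t:\int_0^t\|h\|_{\mathcal{H}_0}^2\,ds\ge N\}$, conclude uniqueness on $[0,\tau_N]$, and let $N\to\infty$. If instead one wanted to argue at the $H^1$ level (matching the well-posedness space), the same scheme works after applying It\^o to $\|\nabla w\|_{L^2}^2$, at the cost of slightly more delicate interpolation in the cross and cubic terms and using the $L^2(0,T;H^2)$ regularity to absorb the top-order contributions.
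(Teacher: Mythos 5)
Your proof is correct and follows essentially the same route as the paper: an $L^2$ energy estimate on the difference $w=u_1-u_2$, the cross-product cancellations $(a\times b)\cdot a=0$ together with one integration by parts, interpolation/H\"older bounds yielding a Gronwall coefficient built from $\|u_1\|_{H^1},\|u_2\|_{H^1},\|h\|_{\mathcal{H}_0}$, and Gronwall's lemma. The only (harmless) difference is that you observe $(w\times\widetilde{G}_k)\cdot w=0$ pointwise so the martingale term vanishes identically and a pathwise Gronwall argument closes the proof, whereas the paper keeps the (in fact vanishing) stochastic integral and instead multiplies by the exponential weight $\varphi(t)=\exp(-\int_0^t\Lambda(s)\,ds)$ and takes expectations before applying Gronwall.
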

\begin{proof} The difference of the solutions $v=u_{1}-u_{2}$, satisfies,
\begin{eqnarray*}
&&dv+\nu_1\triangle vdt=\gamma(-v\times \triangle u_{1}+u_{2}\times \triangle v)dt\\
&&-\nu_2(\mu(|u_{1}|^{2}-|u_{2}|^{2})u_{1}+(1+\mu|u_{2}|^{2})v)dt+\sqrt{\varepsilon}v\times Gd\mathcal{W}+(v\times G)hdt.
\end{eqnarray*}	
Using the It\^{o} formula to function $\|v\|_{L^{2}}^{2}$,
\begin{align}\label{equ3.12}
&d\|v\|_{L^{2}}^{2}+2\nu_1\|\nabla v\|_{L^{2}}^{2}dt\nonumber\\ &=-2\gamma(v\times \triangle u_{1}, v)dt+2\gamma(u_{2}\times \triangle v,  v)dt\nonumber\\
&\quad+2\nu_2\|v\|_{L^{2}}^{2}dt-2\nu_2(\mu(|u_{1}|^{2}-|u_{2}|^{2})u_{1}, v)dt-2\nu_2\mu(|u_{2}|^{2}v, v)dt\nonumber\\
&\quad+2((v\times G)h, v)dt+\varepsilon\|v\times G\|_{L_{Q}(\mathcal{H}_0;L^2)}^{2}dt+2\sqrt{\varepsilon}(v\times G, v)d \mathcal{W}\nonumber\\
&=\sum_{i=1}^{7}J_{i}dt+J_{8}d\mathcal{W}.
\end{align}
Using the fact that $(a\times \triangle b, a)=0$, we have $J_{1}=0$. By the H\"{o}lder inequality, the interpolation inequality (\ref{equ2.2}) and the fact
\begin{eqnarray*}
(u_{2}\times \triangle v,  v)=-(u_{2}\times \nabla v, \nabla v)-(\nabla u_2 \times \nabla v, v)=-(\nabla u_2 \times \nabla v, v),
\end{eqnarray*}
we have
\begin{align}
|J_{2}|&\leq  2\gamma\|v\|_{L^{\infty}}\|\nabla u_{2}\|_{L^{2}}\|\nabla v\|_{L^2}\nonumber\\ &\leq C\|v\|_{L^2}^\frac{1}{2}\|v\|_{H^1}^\frac{1}{2}\|\nabla u_{2}\|_{L^{2}}\|\nabla v\|_{L^2}\nonumber\\
&\leq \frac{\nu_1}{2}\|\nabla v\|_{L^2}^2+C\|v\|_{L^2}^2\|\nabla u_{2}\|_{L^{2}}^4,\label{equ3.13}\\
|J_{4}+J_{5}|&\leq C(|v|(|u_{1}|+|u_{2}|)u_{1},v)+\|v\|_{L^{2}}^{2}\|u_{2}\|_{L^{\infty}}^{2} \nonumber\\
&\leq C\|v\|_{L^{2}}^{2}(\|u_{1}\|_{L^{\infty}}^{2}+\|u_{2}\|_{L^{\infty}}^{2})\nonumber\\
&\leq C\|v\|_{L^{2}}^{2}\|u_{1},u_{2}\|_{H^{1}}^{2}.\label{equ3.14}
\end{align}
Using (\ref{equ2.9}) and the H\"{o}lder inequality, we obtain
\begin{eqnarray}\label{equ3.15}
|J_{6}|\leq \|v\|_{L^{2}}\|v\times G\|_{L_{Q}(\mathcal{H}_0;L^2)}\|h\|_{\mathcal{H}_{0}}\leq C\|v\|_{L^{2}}^{2}\|h\|_{\mathcal{H}^{0}}.
\end{eqnarray}
The condition (\ref{equ2.2}) yields
\begin{align}\label{equ3.16}
|J_7|&= \varepsilon\sum_{k\geq 1}\|v\times \widetilde{G}_k\|^2_{L^2}=\varepsilon\sum_{k\geq 1}\int_{\mathcal{D}}|v\times \widetilde{G}_k|^2dx\nonumber\\
&\leq C\varepsilon\sum_{k\geq 1}\| \widetilde{G}_k\|_{H^1}^2\|v\|_{L^2}^2\leq C\varepsilon\|v\|_{L^2}^2.
\end{align}
Let
\begin{eqnarray*}
&&\Psi(t)=\|v\|_{L^{2}}^{2},~\Phi(t)=\|\nabla v\|_{L^{2}}^{2},\\
&&\Lambda(t)=C(\varepsilon+1+\|h\|_{\mathcal{H}_{0}}+\|u_{1},u_{2}\|_{H^{1}}^{2}+\|u_{2}\|_{H^1}^4),~~
\varphi(t)={\rm exp}\left(-\int_{0}^{t}\Lambda(s)ds\right).
\end{eqnarray*}
Using the It\^{o} product formula to the process $\varphi(t)\Psi(t)$, combining (\ref{equ3.12})-(\ref{equ3.16}), we obtain
\begin{eqnarray}\label{equ3.17}
&&\varphi(t)\Psi(t)+\int_{0}^{t}\varphi(s)\Phi(s)ds\nonumber\\
&&\leq C\int_{0}^{t}\varphi(s)\Psi(s)ds+2\sqrt{\varepsilon}\int_{0}^{t}\varphi(s)(v\times G, v)d\mathcal{W}.
\end{eqnarray}
Taking expectation on both sides of (\ref{equ3.17}), we have
\begin{eqnarray*}
\mathbb E\varphi(t)\Psi(t)\leq C\mathbb E\int_{0}^{t}\varphi(s)\Psi(s)ds,~ t\in[0,T].
\end{eqnarray*}
Here, we use the fact that the stochastic term is a square integral martingale which its expectation vanishes. The Gronwall Lemma gives $\Psi(t)=0$, $\mathbb{P}$-a.s. for all $t\in[0,T]$. This completes the proof of uniqueness.
\end{proof}

\section{Large deviation principle}\label{sec4}

We shall establish the large deviation principle using a weak convergence approach \cite{Budhiraja,Dupuis}, based on the variational representations of infinite-dimensional Wiener processes.

For a Polish space $\mathcal{X}$, a function $I:\mathcal{X}\rightarrow [0,\infty]$ is called a rate function if $I$ is lower semicontinuous and is referred to as a good rate function if for each $M<\infty$, the level set $\{x\in \mathcal{X}:I(x)\leq M\}$ is compact. For completeness we now give the definition of large deviation and Laplace principles. For more background in this area of study we refer to \cite{Ellis}.

\begin{definition}\label{def4.1}[Large Deviation Principle] The family $\left\{U^{\varepsilon} \right\}_{\varepsilon>0}$ satisfies the LDP on $\mathcal{X}$ with rate function $I$ if the following two conditions hold,

a. LDP lower bound: for every open set $O\subset \mathcal{X}$,
\begin{equation*}
-\inf_{x\in O} I(x) \leq \liminf_{\varepsilon \rightarrow 0}\varepsilon \log \mathbb P(U^{\varepsilon} \in O);
\end{equation*}

b. LDP upper bound: for every closed set $C \subset \mathcal{X}$,
\begin{equation*}
\limsup_{\varepsilon \rightarrow 0} \varepsilon \log \mathbb P(U^{\varepsilon} \in C) \leq -\inf_{x\in C}I(x).
\end{equation*}
\end{definition}

\begin{definition}\label{def4.2}[Laplace Principle] Let $I$ be a rate function on space $\mathcal{X}$. A family $\{U^{\varepsilon}\}_{\varepsilon>0}$ of $\mathcal{X}$-valued random processes is said to satisfy the Laplace principle on $\mathcal{X}$ with rate function $I$ if for each real-valued, bounded and continuous function $f$, we have
\begin{eqnarray*}
\lim_{\varepsilon\rightarrow 0}\varepsilon\log \mathbb E\bigg\{{\rm exp}\bigg[-\frac{1}{\varepsilon}f(U^{\varepsilon})\bigg]\bigg\}=-\inf_{x\in \mathcal{X}}\{f(x)+I(x)\}.
\end{eqnarray*}
\end{definition}
Since the family $\{U^{\varepsilon}\}$ is a Polish space valued random process, the Laplace principle and the large deviation principle are equivalent, see \cite[Theorem 1.2.3]{Ellis}. To apply the weak convergence approach, we will use the following theorem given in \cite{Dupuis}. For examples of results on large deviations for stochastic PDEs by applying the theorem below see \cite{Millet,Chueshov,Duan,Sundar}.
\begin{theorem}\rm\!\!\cite[Theorem 6]{Dupuis} \label{the4.1} For Polish spaces $\mathcal{X},\mathcal{Y}$ and each $\varepsilon>0$, let $\mathcal{G}^{\varepsilon}:\mathcal{Y}\rightarrow \mathcal{X}$ be a measurable map and define $U^{\varepsilon}:=\mathcal{G}^{\varepsilon}(\sqrt{\varepsilon}\mathcal{W})$ where $\mathcal{W}$ is a $Q$-wiener process. If there is a measurable map $\mathcal{G}^{0}:\mathcal{Y}\rightarrow \mathcal{X}$ such that the following conditions hold,

$(1)$ For $M<\infty$, if $h_{\varepsilon}$ converges in distribution to $h$ as $S_{M}$-valued random elements, then,
\begin{eqnarray*}
\mathcal{G}^{\varepsilon}\left(\sqrt{\varepsilon}\mathcal{W}+\int_{0}^{\cdot}h_{\varepsilon}(s)ds\right)\rightarrow \mathcal{G}^{0}\left(\int_{0}^{\cdot}hds\right)
\end{eqnarray*}
as $\varepsilon\rightarrow 0$ in distribution $\mathcal{X}$;

$(2)$ For every $M<\infty$, the set
\begin{eqnarray*}
K_{M}=\{U_{h}^{0}:h\in S_{M}\}
\end{eqnarray*}
is a compact subset of $\mathcal{X}$.
Then, the family $\{U^{\varepsilon}\}$ satisfies the large deviation principle with the rate function
\begin{eqnarray*}
I(U)=\inf_{\{h\in L^{2}(0,T;\mathcal{H}_{0}):U=\mathcal{G}^{0}(\int_{0}^{\cdot}h(s)ds)\}}\left\{\frac{1}{2}\int_{0}^{T}\|h\|_{0}^{2}ds\right\}.
\end{eqnarray*}
\end{theorem}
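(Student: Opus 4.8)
Since $\mathcal{X}$ is Polish, the Laplace principle and the large deviation principle are equivalent (\cite[Theorem 1.2.3]{Ellis}), so the plan is to prove the Laplace principle for $U^{\varepsilon}=\mathcal{G}^{\varepsilon}(\sqrt{\varepsilon}\mathcal{W})$ with the stated rate function $I$, and then to check separately that $I$ is a good rate function. The engine of the whole argument is the Bou\'{e}--Dupuis variational representation, in the infinite-dimensional form of \cite{Budhiraja}: for every bounded measurable $F:\mathcal{Y}\to\mathbb{R}$,
\[
-\varepsilon\log\mathbb{E}\Big[\exp\Big(-\tfrac{1}{\varepsilon}F(\sqrt{\varepsilon}\mathcal{W})\Big)\Big]=\inf_{v\in\mathcal{A}}\mathbb{E}\Big[\tfrac{1}{2}\int_{0}^{T}\|v(s)\|_{\mathcal{H}_{0}}^{2}\,ds+F\Big(\sqrt{\varepsilon}\mathcal{W}+\int_{0}^{\cdot}v(s)\,ds\Big)\Big].
\]
Taking $F=f\circ\mathcal{G}^{\varepsilon}$ for an arbitrary bounded continuous $f:\mathcal{X}\to\mathbb{R}$ and writing $\Lambda^{\varepsilon}:=-\varepsilon\log\mathbb{E}\exp(-\tfrac{1}{\varepsilon}f(U^{\varepsilon}))$, this reduces the Laplace limit to the statement
\[
\Lambda^{\varepsilon}=\inf_{v\in\mathcal{A}}\mathbb{E}\Big[\tfrac{1}{2}\int_{0}^{T}\|v\|_{\mathcal{H}_{0}}^{2}\,ds+f\Big(\mathcal{G}^{\varepsilon}\Big(\sqrt{\varepsilon}\mathcal{W}+\int_{0}^{\cdot}v\,ds\Big)\Big)\Big]\longrightarrow\inf_{x\in\mathcal{X}}\{f(x)+I(x)\}\qquad(\varepsilon\to0),
\]
which I would prove as two matching one-sided bounds.

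For the upper bound $\limsup_{\varepsilon\to0}\Lambda^{\varepsilon}\le\inf_{x}\{f(x)+I(x)\}$, first fix $\delta>0$, pick $\phi\in\mathcal{X}$ with $f(\phi)+I(\phi)\le\inf_{x}\{f+I\}+\delta$ (so $I(\phi)<\infty$), and choose a \emph{deterministic} $h\in L^{2}(0,T;\mathcal{H}_{0})$ with $\mathcal{G}^{0}(\int_{0}^{\cdot}h\,ds)=\phi$ and $\tfrac{1}{2}\int_{0}^{T}\|h\|_{\mathcal{H}_{0}}^{2}\,ds\le I(\phi)+\delta$. Plugging this fixed $h$ into the variational representation as the control and applying condition $(1)$ to the constant sequence $h_{\varepsilon}\equiv h$ gives $\mathcal{G}^{\varepsilon}(\sqrt{\varepsilon}\mathcal{W}+\int_{0}^{\cdot}h\,ds)\to\phi$ in distribution, and boundedness plus continuity of $f$ give $\mathbb{E}f(\mathcal{G}^{\varepsilon}(\cdots))\to f(\phi)$. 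Hence $\limsup_{\varepsilon}\Lambda^{\varepsilon}\le\tfrac{1}{2}\int_{0}^{T}\|h\|_{\mathcal{H}_{0}}^{2}\,ds+f(\phi)\le\inf_{x}\{f+I\}+2\delta$, and letting $\delta\downarrow0$ finishes this half.

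For the lower bound $\liminf_{\varepsilon\to0}\Lambda^{\varepsilon}\ge\inf_{x}\{f(x)+I(x)\}$, I would pick $\varepsilon$-nearly optimal controls $v_{\varepsilon}\in\mathcal{A}$ in the representation. Since $f$ is bounded, $\tfrac{1}{2}\mathbb{E}\int_{0}^{T}\|v_{\varepsilon}\|_{\mathcal{H}_{0}}^{2}\,ds\le 2\|f\|_{\infty}+1$ for small $\varepsilon$, and a standard truncation (stop the control once its accumulated energy reaches a fixed level $M$ and estimate the error via Chebyshev and boundedness of $f$) lets me assume $v_{\varepsilon}\in\mathcal{A}_{M}$. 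The set $S_{M}$ with its weak topology is compact and metrizable, so the laws of $\{v_{\varepsilon}\}$ are automatically tight; passing to a subsequence, $v_{\varepsilon}\Rightarrow v$ as $S_{M}$-valued random elements. Condition $(1)$ then yields $\mathcal{G}^{\varepsilon}(\sqrt{\varepsilon}\mathcal{W}+\int_{0}^{\cdot}v_{\varepsilon}\,ds)\Rightarrow\mathcal{G}^{0}(\int_{0}^{\cdot}v\,ds)$; by the Skorokhod representation theorem I would realize all these convergences almost surely on one probability space, and then combine the weak lower semicontinuity of $v\mapsto\int_{0}^{T}\|v\|_{\mathcal{H}_{0}}^{2}\,ds$, the continuity and boundedness of $f$, and the definitional bound $I(\mathcal{G}^{0}(\int_{0}^{\cdot}v\,ds))\le\tfrac{1}{2}\int_{0}^{T}\|v\|_{\mathcal{H}_{0}}^{2}\,ds$ to obtain
\[
\liminf_{\varepsilon\to0}\Lambda^{\varepsilon}\ge\mathbb{E}\Big[\tfrac{1}{2}\int_{0}^{T}\|v\|_{\mathcal{H}_{0}}^{2}\,ds+f\Big(\mathcal{G}^{0}\Big(\int_{0}^{\cdot}v\,ds\Big)\Big)\Big]\ge\mathbb{E}\Big[(I+f)\Big(\mathcal{G}^{0}\Big(\int_{0}^{\cdot}v\,ds\Big)\Big)\Big]\ge\inf_{x\in\mathcal{X}}\{f(x)+I(x)\}.
\]

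Finally, I would check that $I$ is a good rate function: the level set $\{x\in\mathcal{X}:I(x)\le N\}$ is contained in the compact set $K_{N+1}$ provided by condition $(2)$, and it is closed — given $x_{n}\to x$ in this set, take near-minimizing controls $h_{n}\in S_{N+1}$, extract a weakly convergent subsequence $h_{n}\to h$ by weak compactness of $S_{N+1}$, and use the continuity of $h\mapsto\mathcal{G}^{0}(\int_{0}^{\cdot}h\,ds)$ on $S_{N+1}$ (itself a byproduct of the hypotheses) to identify $x=\mathcal{G}^{0}(\int_{0}^{\cdot}h\,ds)$ with $\tfrac{1}{2}\int_{0}^{T}\|h\|_{\mathcal{H}_{0}}^{2}\,ds\le N$ — so each level set is compact, which also gives lower semicontinuity of $I$. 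I expect the lower bound to be the main obstacle: the reduction from $\mathcal{A}$ to $\mathcal{A}_{M}$, the tightness-plus-Skorokhod step, and above all the simultaneous passage of the nonlinear map $\mathcal{G}^{\varepsilon}$ to the limit (where the full strength of condition $(1)$ is used) together with the weak lower semicontinuity of the control cost.
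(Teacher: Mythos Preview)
The paper does not prove this theorem at all: it is quoted verbatim as \cite[Theorem 6]{Dupuis} and used as a black box to deduce Theorem~\ref{the2.1} from Lemmas~\ref{lem4.1} and~\ref{lem4.2}. There is therefore no ``paper's own proof'' to compare your attempt against.

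That said, your sketch is essentially the original argument of Budhiraja--Dupuis--Maroulas: the Bou\'e--Dupuis variational formula for $-\varepsilon\log\mathbb{E}\exp(-\tfrac{1}{\varepsilon}F(\sqrt{\varepsilon}\mathcal{W}))$, the upper Laplace bound via a deterministic near-optimal control, the lower Laplace bound via near-optimal controls truncated into $\mathcal{A}_M$ followed by tightness on the weakly compact $S_M$ and condition~(1), and finally goodness of $I$ from condition~(2). The outline is sound; the only place to be careful is the lower bound, where you need the \emph{joint} convergence in distribution of the pair $\big(v_\varepsilon,\ \mathcal{G}^\varepsilon(\sqrt{\varepsilon}\mathcal{W}+\int_0^\cdot v_\varepsilon\,ds)\big)$ before invoking Skorokhod, and where the weak lower semicontinuity of the energy must be combined with Fatou's lemma after the a.s.\ realization --- both standard, but worth stating explicitly.
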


The solution to the stochastic system (\ref{equ3.1}) is denoted as $u_{\varepsilon}=\mathcal{G}^{\varepsilon}(\sqrt{\varepsilon}\mathcal{W})$ for a Borel measurable function $\mathcal{G}^{\varepsilon}:\mathcal{C}([0,T];\mathcal{H}_{0})\rightarrow X$. Here $X$ is the Banach space $L^\infty(0,T;H^1)\cap L^2(0,T;H^2)$. Let $\{h_{\varepsilon}\}_{\varepsilon\in(0,1]}\subset \mathcal{A}_{M}$ be a family of random elements. Let $u_{h_{\varepsilon}}^{\varepsilon}$ be the solution of the following stochastic controlled system:
\begin{eqnarray}\label{equ4.1}
\left\{\begin{array}{ll}
du_{h_{\varepsilon}}^{\varepsilon}=\nu_{1}\triangle u_{h_{\varepsilon}}^{\varepsilon} dt+ \gamma u_{h_{\varepsilon}}^{\varepsilon}\times \triangle u_{h_{\varepsilon}}^{\varepsilon}dt\\ \qquad\qquad-\nu_{2}(1+\mu|u_{h_{\varepsilon}}^{\varepsilon}|^{2})u_{h_{\varepsilon}}^{\varepsilon} dt
+\sqrt{\varepsilon}u_{h_{\varepsilon}}^{\varepsilon}\times Gd\mathcal{W}+(u_{h_{\varepsilon}}^{\varepsilon}\times G)h_{\varepsilon}dt,\\
u_{h_{\varepsilon}}^{\varepsilon}(0)=u(0).
\end{array}\right.
\end{eqnarray}
Thanks to the uniqueness of solution to system (\ref{equ4.1}), we have
\begin{eqnarray*}
u_{h_{\varepsilon}}^{\varepsilon}=\mathcal{G}^{\varepsilon}\left(\sqrt{\varepsilon}\mathcal{W}+\int_{0}^{\cdot}h_{\varepsilon}(s)ds\right).
\end{eqnarray*}

Let $h\in \mathcal{A}_{M}$ and $u_{h}$ be the solution of the corresponding deterministic controlled system:
\begin{eqnarray}\label{equ4.2}
\left\{\begin{array}{ll}
du_{h}=\nu_{1}\triangle u_{h}dt+ \gamma u_{h}\times \triangle u_{h}dt-\nu_{2}(1+\mu|u_{h}|^{2})u_{h}dt+(u_{h}\times G)hdt,\\
u_{h}(0)=u(0).
\end{array}\right.
\end{eqnarray}
 Let $\mathcal{D}=\left\{\int_{0}^{\cdot}h(s)ds:h\in L^{2}(0,T;\mathcal{H}_{0})\right\}\subset \mathcal{C}([0,T];\mathcal{H}_{0})$ and we define the measurable map $\mathcal{G}^{0}: \mathcal{C}([0,T];\mathcal{H}_{0})\rightarrow X$ by $\mathcal{G}^{0}(g)=u_{h}$, where $g=\int_{0}^{\cdot}h(s)ds\in \mathcal{D}$, and otherwise, set $\mathcal{G}^{0}(g)=0$. Next, we establish the weak convergence of the family $\{u_{h_{\varepsilon}}^{\varepsilon}\}$ as $\varepsilon\rightarrow 0$ needed by Theorem \ref{lem4.1}.
\begin{lemma}\label{lem4.1}Let the initial data $u(0)\in L^{p}(\Omega; H^{1})$ be $\mathcal{F}_{0}$-measurable random variable and the operator $G$ satisfies Condition (\ref{equ1.2}). For every $M<\infty$, assume that $h_{\varepsilon}$ converges to $h$ in distribution as random elements taking values in $\mathcal{A}_{M}$. Then the process $\mathcal G^{\varepsilon}\left(\sqrt{\varepsilon}\mathcal{W}+\int_{0}^{\cdot}h_{\varepsilon}(s)ds\right)$ converges in distribution to $\mathcal G^{0}\left(\int_{0}^{\cdot}hds\right)$ in $X$ as $\varepsilon\rightarrow 0$, that is, the solution $u_{h_{\varepsilon}}^{\varepsilon}$ of system (\ref{equ4.1}) converges in distribution in $X$ to the solution $u_{h}$ of system (\ref{equ4.2}) as $\varepsilon\rightarrow 0$.
\end{lemma}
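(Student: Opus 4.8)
The plan is to combine a tightness/compactness argument with an identification of the limit, using the fact that weak convergence together with uniqueness of the limiting equation upgrades to convergence in distribution. Because the diffusion coefficient $\sqrt{\varepsilon}u_{h_\varepsilon}^\varepsilon\times G$ carries the vanishing factor $\sqrt{\varepsilon}$, the stochastic integral term will disappear in the limit and the limit of $u_{h_\varepsilon}^\varepsilon$ will solve the deterministic controlled equation \eqref{equ4.2}. Throughout I would work on the canonical path space, so that $h_\varepsilon$, $\mathcal{W}$ and $u_{h_\varepsilon}^\varepsilon$ live on a common probability space, and invoke the Skorokhod representation theorem at the appropriate point.

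First I would record the uniform a priori estimates: by the same computation as in Lemma \ref{lem3.1} (which already allows the control term $(u\times G)h$ with $h\in\mathcal{A}_M$), one has
\begin{eqnarray*}
\sup_{\varepsilon\in(0,1]}\mathbb{E}\left(\sup_{s\in[0,T]}\|\nabla u_{h_\varepsilon}^\varepsilon\|_{L^2}^{2p}\right)+\sup_{\varepsilon\in(0,1]}\mathbb{E}\int_0^T\|\nabla u_{h_\varepsilon}^\varepsilon\|_{L^2}^{2p-2}\|\triangle u_{h_\varepsilon}^\varepsilon\|_{L^2}^2\,ds\leq C,
\end{eqnarray*}
together with the fractional-in-time bounds \eqref{equ3.3}–\eqref{equ3.4} applied to $u_{h_\varepsilon}^\varepsilon$. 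These give that $u_{h_\varepsilon}^\varepsilon$ is bounded in $L^2(\Omega;L^\infty(0,T;H^1)\cap L^2(0,T;H^2))$ and that the laws of $u_{h_\varepsilon}^\varepsilon$ are tight in, say, $L^2(0,T;H^1)\cap \mathcal{C}([0,T];L^2)$ (using the compact embeddings $H^2\hookrightarrow\hookrightarrow H^1$, $H^1\hookrightarrow\hookrightarrow L^2$ and the Aubin–Lions–Simon lemma with the $W^{\alpha,p}$ control in time). Then I would take a subsequence along which $(u_{h_\varepsilon}^\varepsilon, h_\varepsilon, \sqrt{\varepsilon}\mathcal{W})$ converges in law, apply Skorokhod to get almost-sure convergence of new variables $(\tilde u_\varepsilon,\tilde h_\varepsilon,\tilde{\mathcal{W}}_\varepsilon)\to(\tilde u,\tilde h,0)$ on a new space (note $\sqrt{\varepsilon}\mathcal{W}\to 0$), with $\tilde h_\varepsilon\to\tilde h$ in the weak topology of $S_M$ and $\tilde u_\varepsilon\to\tilde u$ strongly in $L^2(0,T;H^1)$ and weak-$*$ in $L^\infty(0,T;H^1)\cap L^2(0,T;H^2)$.

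Next I would pass to the limit in the (controlled) equation satisfied by $\tilde u_\varepsilon$, tested against smooth functions. The linear terms $\nu_1\triangle\tilde u_\varepsilon$ pass by the weak-$*$ convergence; the cubic term $(1+\mu|\tilde u_\varepsilon|^2)\tilde u_\varepsilon$ passes by the strong $L^2(0,T;H^1)$ convergence together with the $L^\infty H^1$ bound and the interpolation inequality \eqref{equ2.2}; the gyromagnetic term $\tilde u_\varepsilon\times\triangle\tilde u_\varepsilon$ is handled by writing $(\tilde u_\varepsilon\times\triangle\tilde u_\varepsilon,\phi)=-(\tilde u_\varepsilon\times\nabla\tilde u_\varepsilon,\nabla\phi)$ and using strong convergence of $\nabla\tilde u_\varepsilon$ in $L^2$ in space-time against the weak convergence of the other factor, or directly since $\nabla\tilde u_\varepsilon\to\nabla\tilde u$ strongly in $L^2((0,T)\times\mathcal{D})$ while $\tilde u_\varepsilon\to\tilde u$ in $L^\infty$. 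The control term needs care: $(\tilde u_\varepsilon\times G)\tilde h_\varepsilon$ is bilinear in $(\tilde u_\varepsilon,\tilde h_\varepsilon)$ where one factor converges only weakly; here I would use that $\tilde u_\varepsilon\to\tilde u$ strongly (in $L^2(0,T;H^1)$, hence in $L^2(0,T;L^2)$), so $\tilde u_\varepsilon\times G\to\tilde u\times G$ strongly in $L^2(0,T;L_Q(\mathcal{H}_0;L^2))$ by \eqref{equ2.9}–\eqref{equ2.10}, which against the weak $L^2(0,T;\mathcal{H}_0)$ convergence of $\tilde h_\varepsilon$ gives $\int_0^\cdot(\tilde u_\varepsilon\times G)\tilde h_\varepsilon\,ds\to\int_0^\cdot(\tilde u\times G)\tilde h\,ds$. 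Finally the stochastic term: by \eqref{equ2.7} and \eqref{equ2.9}, $\mathbb{E}\sup_{t}\|\sqrt{\varepsilon}\int_0^t\tilde u_\varepsilon\times G\,d\tilde{\mathcal{W}}_\varepsilon\|_{L^2}^2\leq C\varepsilon\to 0$, so it drops out. Thus $\tilde u$ solves \eqref{equ4.2} with control $\tilde h$, and since $\tilde h$ has the same law as $h$ and \eqref{equ4.2} has a unique solution, $\tilde u=\mathcal{G}^0(\int_0^\cdot\tilde h\,ds)$ in law; as this identifies the limit uniquely, the whole family (not just a subsequence) converges in distribution in $X$. A small additional point is to promote convergence from $L^2(0,T;H^1)\cap\mathcal{C}([0,T];L^2)$ to convergence in $X=L^\infty(0,T;H^1)\cap L^2(0,T;H^2)$; this I would get by arguing that the uniform bounds force convergence of norms along the limit, or more directly by running an energy-type estimate for the difference $\tilde u_\varepsilon-u_h$ as in the uniqueness proof of Section \ref{sec32}, which simultaneously yields the strong $X$-convergence.

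The main obstacle I anticipate is the joint passage to the limit in the two bilinear terms $\tilde u_\varepsilon\times\triangle\tilde u_\varepsilon$ and $(\tilde u_\varepsilon\times G)\tilde h_\varepsilon$, where in each case only one factor converges strongly; the strong $L^2(0,T;H^1)$ convergence of $\tilde u_\varepsilon$, obtained from the $H^2$ and fractional-time bounds via Aubin–Lions, is exactly what makes this work, and getting that strong convergence (rather than merely weak) is the crux. A secondary technical nuisance is that the a priori bounds are in expectation over $\Omega$ while the statement asks for convergence on $L^2(\Omega,X)$; but since the limit is deterministic given the (random) control $h$, and the uniform integrability provided by the $L^{2p}$ moment bounds upgrades convergence in probability to convergence in $L^2(\Omega;X)$, this is handled routinely once the in-distribution convergence and uniqueness are in place.
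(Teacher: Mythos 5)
Your strategy---uniform estimates as in Lemma \ref{lem3.1}, tightness via Aubin--Lions and the $W^{\alpha,p}$ time bounds, Skorokhod representation, identification of the limit as the solution of \eqref{equ4.2} by pairing strong $L^2(0,T;H^1)$ convergence of $u^\varepsilon_{h_\varepsilon}$ with weak $L^2(0,T;\mathcal{H}_0)$ convergence of the controls, and uniqueness to fix the limit law---is the same circle of ideas the paper uses, only organized in reverse. The paper works directly with the difference $V_\varepsilon=u^\varepsilon_{h_\varepsilon}-u_h$ on the original probability space and proves convergence in probability in $X$ through an $H^1$-level It\^o/energy estimate, invoking the tightness/Skorokhod/Gy\"ongy--Krylov machinery only as an auxiliary claim to obtain $u^\varepsilon_{h_\varepsilon}\to u_h$ in $L^2(0,T;H^1)$ almost surely and thereby dispose of the cross term $((u_h\times G)(h_\varepsilon-h),\triangle V_\varepsilon)$. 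Your compactness-first organization is legitimate (convergence in distribution is all the lemma asks for), and your treatment of the two bilinear terms in the identification step is correct.

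The genuine gap is the final upgrade to the topology of $X=L^\infty(0,T;H^1)\cap L^2(0,T;H^2)$, which you compress into one sentence although it is the bulk of the paper's proof. Neither of your two suggestions works as stated: uniform bounds only give weak-$*$ convergence, not convergence of norms; and an estimate ``as in the uniqueness proof of Section \ref{sec32}'' is at the $L^2$-level (It\^o for $\|v\|_{L^2}^2$), so it can at best control the difference in $\mathcal{C}([0,T];L^2)\cap L^2(0,T;H^1)$, not in $X$. What is actually needed is the It\^o formula for $\|\nabla(u^\varepsilon_{h_\varepsilon}-u_h)\|_{L^2}^2$ as in \eqref{equ4.3}--\eqref{equ4.7}. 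There the Gronwall exponent involves $\int_0^T\|\triangle u^\varepsilon_{h_\varepsilon}\|_{L^2}^2\,dt$ and $\sup_t\|(u^\varepsilon_{h_\varepsilon},u_h)\|_{H^1}^4$, which are bounded only in expectation and not uniformly almost surely, so one cannot apply Gronwall pathwise; the paper's remedy is to localize on the sets $B_N^\varepsilon(T)$, use the Markov-type bound $\mathbb{P}((B_N^\varepsilon)^c)\le C/N$ uniformly in $\varepsilon$ and $h_\varepsilon\in\mathcal{A}_M$ (Claim 1), and finish with a Chebyshev argument. Moreover, the term $((u_h\times G)(h_\varepsilon-h),\triangle V_\varepsilon)$ reappears in this $H^1$-level estimate and must be handled by integration by parts, the weak convergence $h_\varepsilon\rightharpoonup h$, the strong $L^2(0,T;H^1)$ convergence you already established, and a Vitali/uniform-integrability step to pass from almost-sure to $L^1(\Omega)$ convergence, as in \eqref{equ4.21}--\eqref{equ4.22}. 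Once you add this localized $H^1$-level difference estimate (and, as a smaller point, the routine martingale identification ensuring the Skorokhod copies still solve the controlled equation with respect to a genuine Wiener process), your argument closes and coincides in substance with the paper's.
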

\begin{proof} Here, we prove directly $u_{h_{\varepsilon}}^{\varepsilon}$ converges to $u_{h}$ in probability. Let $V_{\varepsilon}=u_{h_{\varepsilon}}^{\varepsilon}-u_{h}$ be the difference of two solutions and satisfies
\begin{eqnarray*}
&&dV_{\varepsilon}-\nu_{1}\triangle V_{\varepsilon}dt=\gamma (V_{\varepsilon}\times \triangle u_{h_{\varepsilon}}^{\varepsilon}+u_{h}\times \triangle V_{\varepsilon})dt-\nu_{2}\mu(|u_{h_{\varepsilon}}^{\varepsilon}|^{2}-|u_{h}|^{2})u_{h_{\varepsilon}}^{\varepsilon}dt\nonumber\\
&&\quad -\nu_{2}(1+\mu|u_{h}|^{2})V_{\varepsilon}dt+\sqrt{\varepsilon}u_{h_{\varepsilon}}^{\varepsilon}\times Gd\mathcal{W}+((u_{h_{\varepsilon}}^{\varepsilon}\times G)h_{\varepsilon}-(u_{h}\times G)h)dt.
\end{eqnarray*}
Using the It\^{o} formula to the function $\|\nabla V_{\varepsilon}\|_{L^{2}}^{2}$, we have
\begin{align}\label{equ4.3}
d\|\nabla V_{\varepsilon}\|_{L^{2}}^{2}&+2\nu_{1}\|\triangle V_{\varepsilon}\|_{L^{2}}^{2}dt=-2\gamma(V_{\varepsilon}\times \triangle u_{h_{\varepsilon}}^{\varepsilon}+u_{h}\times \triangle V_{\varepsilon}, \triangle V_{\varepsilon})dt\nonumber\\ &+2\nu_{2}\mu((|u_{h_{\varepsilon}}^{\varepsilon}|^{2}-|u_{h}|^{2})u_{h_{\varepsilon}}^{\varepsilon}, \triangle V_{\varepsilon})dt\nonumber\\ &+2\nu_{2}((1+\mu|u_{h}|^{2})V_{\varepsilon},\triangle V_{\varepsilon})dt-2\sqrt{\varepsilon}(u_{h_{\varepsilon}}^{\varepsilon}\times G,\triangle V_{\varepsilon})d\mathcal{W}\nonumber\\
&+\varepsilon\|u_{h_{\varepsilon}}^{\varepsilon}\times G\|_{L_{Q}(H;H^1)} ^{2}dt-2((u_{h_{\varepsilon}}^{\varepsilon}\times G)h_{\varepsilon}-(u_{h}\times G)h, \triangle V_{\varepsilon})dt.
\end{align}
We next estimate all of the terms on the right hand side of (\ref{equ4.3}). Lemma \ref{lem2.1} (\ref{equ2.4}) gives $(u_{h}\times \triangle V_{\varepsilon}, \triangle V_{\varepsilon})=0$. By the H\"{o}lder inequality and Sobolev embedding $H^{1}\subset L^{\infty}$, we have
\begin{align}\label{equ4.4}
|(V_{\varepsilon}\times \triangle u_{h_{\varepsilon}}^{\varepsilon},\triangle V_{\varepsilon})|
&\leq \|\triangle V_{\varepsilon}\|_{L^{2}}\| \triangle u_{h_{\varepsilon}}^{\varepsilon}\|_{L^{2}}\|V_{\varepsilon}\|_{L^{\infty}}\nonumber\\
&\leq C\|\triangle V_{\varepsilon}\|_{L^{2}}\| \triangle u_{h_{\varepsilon}}^{\varepsilon}\|_{L^{2}}\|V_{\varepsilon}\|_{H^{1}}\nonumber\\
&\leq C\| \triangle u_{h_{\varepsilon}}^{\varepsilon}\|_{L^{2}}^{2}\|V_{\varepsilon}\|_{H^{1}}^{2}+\frac{\nu_{1}}{2}\|\triangle V_{\varepsilon}\|_{L^{2}}^{2},
\end{align}
and
\begin{align}\label{equ4.5}
|((1+\mu|u_{h}|^{2})V_{\varepsilon},\triangle V_{\varepsilon})|&\leq C\|\triangle V_{\varepsilon}\|_{L^{2}}\|V_{\varepsilon}\|_{H^{1}}(1+\mu\|u_{h}\|_{L^{\infty}}^{2})\nonumber\\
&\leq \frac{\nu_{1}}{2}\|\triangle V_{\varepsilon}\|_{L^{2}}^{2}+C\|V_{\varepsilon}\|_{H^{1}}^{2}(1+\|u_{h}\|_{H^{1}}^{4}).
\end{align}
Using the same estimate as in (\ref{equ3.14}), we gain
\begin{align}\label{equ4.6}
|(\mu(|u_{h_{\varepsilon}}^{\varepsilon}|^{2}-|u_{h}|^{2})u_{h_{\varepsilon}}^{\varepsilon}, \triangle V_{\varepsilon})|
&\leq C\|\triangle V_{\varepsilon}\|_{L^{2}}\|V_{\varepsilon}\|_{H^{1}}\|(u_{h_{\varepsilon}}^{\varepsilon},u_{h})\|_{L^{\infty}}^{2}\nonumber\\
&\leq \frac{\nu_{1}}{2}\|\triangle V_{\varepsilon}\|_{L^{2}}^{2}+C\|V_{\varepsilon}\|_{H^{1}}^{2}\|(u_{h_{\varepsilon}}^{\varepsilon},u_{h})\|_{H^{1}}^{4}.
\end{align}
The following term may be treated by same way as ({\ref{equ3.13}}),
\begin{eqnarray}\label{equ4.7}
&&((u_{h_{\varepsilon}}^{\varepsilon}\times G)h_{\varepsilon}-(u_{h}\times G)h, \triangle V_{\varepsilon})\nonumber\\
&&\leq |((V_\varepsilon\times G)h_{\varepsilon},\triangle V_{\varepsilon})|+((u_{h}\times G)(h_{\varepsilon}-h),\triangle V_{\varepsilon} )\nonumber\\
&&\leq C\|\nabla V_{\varepsilon}\|_{L^{2}}\|h_{\varepsilon}\|_{\mathcal{H}_{0}}\|V_\varepsilon \times G\|_{L_Q(\mathcal{H}_0;H^1)}+((u_{h}\times G)(h_{\varepsilon}-h),\triangle V_{\varepsilon} )\nonumber\\
&&\leq C\|\nabla V_{\varepsilon}\|_{L^{2}}^{2}\|h_{\varepsilon}\|_{\mathcal{H}_{0}}+((u_{h}\times G)(h_{\varepsilon}-h),\triangle V_{\varepsilon} ).
\end{eqnarray}
For fixed $N>0$ and $t\in [0,T]$, let
\begin{eqnarray*}
&&B_{N}(t)=\left\{w:\sup_{s\in[0,t]}\|\nabla u_{h}\|_{L^{2}}^{2}+\int_{0}^{t}\|\triangle u_{h}\|_{L^{2}}^{2}ds\leq N\right\},\\
&&B_{N}^{\varepsilon}(t)=G_{N}(t)\cap\left\{w:\sup_{s\in[0,t]}\|\nabla u_{h_{\varepsilon}}^{\varepsilon}\|_{L^{2}}^{2}+\int_{0}^{t}\|\triangle u_{h_{\varepsilon}}^{\varepsilon}\|^{2}ds\leq N\right\}.
\end{eqnarray*}
{\bf Claim 1.} We have $\sup_{\varepsilon\in (0,1]}\sup_{h, h_{\varepsilon}\in \mathcal{A}_{M}}\mathbb{P}(B_{N}^{\varepsilon}(t)^{c})\rightarrow 0~{\rm as}~N\rightarrow\infty$.

In fact, for any $h, h_{\varepsilon}\in \mathcal{A}_{M}$, it follows from Markov's inequality and energy estimates (\ref{equ3.2}) that
\begin{align*}
\mathbb{P}(B_{N}^{\varepsilon}(t)^{c})&\leq \mathbb{P}\left(\sup_{s\in[0,t]}\|\nabla u_{h}\|_{L^{2}}^{2}> N\right)+\mathbb{P}\left(\int_{0}^{t}\|\triangle u_{h}\|_{L^{2}}^{2}ds> N\right)\\
&\quad+\mathbb{P}\left(\sup_{s\in[0,t]}\|\nabla u_{h_{\varepsilon}}^{\varepsilon}|_{L^{2}}^{2}> N\right)+\mathbb{P}\left(\int_{0}^{t}\|\triangle u_{h_{\varepsilon}}^{\varepsilon}\|_{L^{2}}^{2}ds> N\right)\\
&\leq \frac{1}{N}\sup_{\varepsilon\in(0, 1],h,h_{\varepsilon}\in \mathcal{A}_{M}}\mathbb E\left(\sup_{s\in[0,t]}\|\nabla u_{h_{\varepsilon}}^{\varepsilon},\nabla u_{h}\|_{L^{2}}^{2}+\int_{0}^{t}\|\triangle u_{h_{\varepsilon}}^{\varepsilon},\triangle u_{h}\|_{L^{2}}^{2}ds\right)\\
&\leq \frac{C}{N},
\end{align*}
where the constant $C$ is independent of $N$.

Next, we show that
\begin{eqnarray}\label{equ4.8}
\mathbb E\left[{\rm I}_{B_{N}^{\varepsilon}(T)}\left(\sup_{t\in[0,T]}\|\nabla V_{\varepsilon}(t)\|_{L^{2}}^{2}+\nu_{1}\int_{0}^{T}\|\triangle V_{\varepsilon}(t)\|_{L^{2}}^{2}dt\right)\right]\rightarrow 0~ {\rm as }~ \varepsilon\rightarrow 0,
\end{eqnarray}
where the notation ${\rm I}_{\cdot}$ denotes the characteristic function. Taking into account (\ref{equ4.4})-(\ref{equ4.7}), we obtain
\begin{eqnarray}\label{equ4.9}
&&{\rm I}_{B_{N}^{\varepsilon}(T)}\left(\sup_{t\in [0,T]}\|\nabla V_{\varepsilon}\|_{L^{2}}^{2}+\nu_{1}\int_{0}^{T}\|\triangle V_{\varepsilon}\|_{L^{2}}^{2}dt\right)\nonumber\\&& \leq C\int_{0}^{T}{\rm I}_{B_{N}^{\varepsilon}(T)}(1+\|\triangle u_{h_{\varepsilon}}^{\varepsilon}\|_{L^{2}}^{2}+\|(u_{h_{\varepsilon}}^{\varepsilon},u_{h})\|_{H^{1}}^{4})\|V_{\varepsilon}\|_{H^{1}}^{2}dt\nonumber\\
&&\quad+2\sqrt{\varepsilon}{\rm I}_{B_{N}^{\varepsilon}(T)}\sup_{t\in [0,T]}\left|\int_{0}^{t}(u_{h_{\varepsilon}}^{\varepsilon}\times G,\triangle V_{\varepsilon})d\mathcal{W}\right|\nonumber\\&&\quad+2{\rm I}_{B_{N}^{\varepsilon}(T)}\int_{0}^{T}((u_{h}\times G)(h_{\varepsilon}-h),\triangle V_{\varepsilon} )dt.
\end{eqnarray}
Taking expectation on both sides of (\ref{equ4.9}), using the Gronwall Lemma, to obtain
\begin{eqnarray}\label{equ4.10}
&&\mathbb E\left[{\rm I}_{B_{N}^{\varepsilon}(T)}\left(\sup_{t\in[0,T]}\|\nabla V_{\varepsilon}(t)\|_{L^{2}}^{2}
+\nu_1\int_{0}^{T}\|\triangle V_{\varepsilon}(t)\|_{L^{2}}^{2}dt\right)\right]\nonumber\\
&&\leq e^{C(M,T,N)}\sqrt{\varepsilon}\mathbb{E}\left({\rm I}_{B_{N}^{\varepsilon}(T)}\sup_{t\in [0,T]}\left|\int_{0}^{t}(u_{h_{\varepsilon}}^{\varepsilon}\times G,\triangle V_{\varepsilon})d\mathcal{W}\right|\right)\nonumber\\&&\quad+2\mathbb{E}\left({\rm I}_{B_{N}^{\varepsilon}(T)}\left|\int_{0}^{T}((u_{h}\times G)(h_{\varepsilon}-h),\triangle V_{\varepsilon} )dt\right|\right).
\end{eqnarray}
For the first term on the right hand side of (\ref{equ4.10}), by the Burkholder-Davis-Gundy inequality (\ref{equ2.7}) and (\ref{equ2.9}),
\begin{eqnarray}\label{equ4.11}
&&\sqrt{\varepsilon}\mathbb{E}\left({\rm I}_{B_{N}^{\varepsilon}(T)}\sup_{t\in [0,T]}\left|\int_{0}^{t}(u_{h_{\varepsilon}}^{\varepsilon}\times G,\triangle V_{\varepsilon})d\mathcal{W}\right|\right)\nonumber\\
&&=\sqrt{\varepsilon}\mathbb{E}\left({\rm I}_{B_{N}^{\varepsilon}(T)}\sup_{t\in [0,T]}\left|\int_{0}^{t}(\nabla(u_{h_{\varepsilon}}^{\varepsilon}\times G),\nabla V_{\varepsilon})d\mathcal{W}\right|\right)\nonumber\\
&&\leq C\sqrt{\varepsilon}\mathbb{E}\left({\rm I}_{B_{N}^{\varepsilon}(T)}\int_{0}^{T}(\nabla (u_{h_{\varepsilon}}^{\varepsilon}\times G),\nabla V_{\varepsilon})^{2}dt\right)^{\frac{1}{2}}\nonumber\\
&&\leq C\sqrt{\varepsilon}\mathbb{E}\left({\rm I}_{B_{N}^{\varepsilon}(T)}\int_{0}^{T}\|V_{\varepsilon}\|_{H^{1}}^{2}\|u_{h_{\varepsilon}}^{\varepsilon}\times G\|_{L_Q(\mathcal{H}_0;H^{1})}^{2}dt\right)^{\frac{1}{2}}\nonumber\\
&&\leq C\sqrt{\varepsilon}\mathbb{E}\left({\rm I}_{B_{N}^{\varepsilon}(T)}\int_{0}^{T}\|V_{\varepsilon}\|_{H^{1}}^{2}\|u_{h_{\varepsilon}}^{\varepsilon}\|_{H^{1}}^{2}dt\right)^{\frac{1}{2}}\leq CN\sqrt{\varepsilon T}.
\end{eqnarray}
{\bf Claim 2.} Suppose that the condition (\ref{equ1.2}) holds, the following convergence hold,
\begin{eqnarray*}
\lim_{\varepsilon\rightarrow 0}\left({\rm I}_{B_{N}^{\varepsilon}(T)}\sup_{t\in [0,T]}\left|\int_{0}^{t}((u_{h}\times G)(h_{\varepsilon}-h),\triangle V_{\varepsilon})ds\right|\right)= 0, ~\mathbb{P}\mbox{-a.s.}
\end{eqnarray*}

In fact, by the a priori estimates (\ref{equ3.2})-(\ref{equ3.4}) and the Aubin-Lions compact embedding lemma, we can show the law of $u_{h_{\varepsilon}}^\varepsilon$ is tight on path space $L^2(0,T; H^1)$, then the Skorokhod representation theorem gives the convergence of $u_{h_{\varepsilon}}^\varepsilon$ itself on path space $L^2(0,T; H^1)$ on a new probability space $(\tilde{\Omega}, \tilde{\mathcal{F}}, \tilde{\mathbb{P}})$. Thanks to the uniqueness, we use Gy\"{o}ngy-Krylov's lemma to recover the convergence almost surely of the sequences $u_{h_{\varepsilon}}^\varepsilon$ on the original probability space. We may infer that there exists a process $u_h\in L^{2}(0,T; H^{1})$ such that $u_{h_{\varepsilon}}^\varepsilon\rightarrow u_h$ in $L^{2}(0,T; H^{1})$ $\mathbb{P}$-a.s. Finally, we need to show that the $u_h$ is a solution of system (\ref{equ4.2}). Here, the condition $h_{\varepsilon}$ converging to $h$ in distribution shall be used to identify the limit.   Following the idea of \cite{Millet}, observe that for $\phi\in H^1$ and $A\subset[0,T]$,
\begin{eqnarray*}
&&~(u_{h_\varepsilon}^\varepsilon,{\rm I}_A\phi)-\bigg(\nu_1\int_0^t(\triangle u_{h},{\rm I}_A \phi)ds+\gamma\int_0^t(u_h\times\triangle u_h, {\rm I}_A\phi)ds\\
&&\qquad\qquad\quad-\nu_2\int_{0}^{t}((1+\mu|u_h|^2)u_h,{\rm I}_A\phi)ds+\int_0^t(u_h\times G h, {\rm I}_A\phi )ds\bigg)=\sum_{i=1}^{5}J_i,
\end{eqnarray*}
where
\begin{eqnarray*}
&&J_1=\nu_1\int_0^t(\triangle u_{h_{\varepsilon}}^\varepsilon-\triangle u_h,{\rm I}_A\phi)ds,~~J_{2}=\gamma\int_0^t(u_{h_{\varepsilon}}\times \triangle u_{h_{\varepsilon}}^\varepsilon-u_h\times \triangle u_h,{\rm I}_A\phi)ds,\\
&&J_{3}=\nu_2\int_0^t((1+\mu|u_{h_{\varepsilon}}^\varepsilon|^2u_{h_{\varepsilon}}^\varepsilon)-(1+\mu|u_{h}|^2u_h),{\rm I}_A\phi)ds,\\
&&J_4=\sqrt{\varepsilon}\int_0^t(u_{h_{\varepsilon}}\times G,{\rm I}_A\phi)d\mathcal{W},~~J_5=\int_0^t(u_{h_{\varepsilon}}^\varepsilon\times G h_{\varepsilon}-u_{h}\times G h,{\rm I}_A\phi)ds.
\end{eqnarray*}
We show that all the terms $\mathbb{E}|J_i|$ converge to 0  as $\varepsilon\rightarrow 0$. Using the H\"{o}lder inequality and the interpolation inequality (\ref{equ2.2}),
\begin{eqnarray}\label{equ4.12}
\mathbb{E}|J_1|\leq \sqrt{t}\|\phi\|_{H^1}\mathbb{E}\int_0^t\|\nabla (u_{h_{\varepsilon}}^\varepsilon- u_h) \|_{L^2}^2ds\leq C\mathbb{E}\int_0^t\|\nabla (u_{h_{\varepsilon}}^\varepsilon- u_h) \|_{L^2}^2ds,
\end{eqnarray}
as well as
\begin{align}\label{equ4.13}
\mathbb{E}|J_2|&\leq\mathbb{E}\int_0^t\|\nabla (u_{h_{\varepsilon}}^\varepsilon- u_h) \|_{L^2}\|\nabla u_{h_{\varepsilon}}^\varepsilon,\nabla u_h\|_{L^2}\|\phi\|_{L^\infty}\nonumber\\&\qquad\qquad+\|u_{h_{\varepsilon}}^\varepsilon- u_h\|_{L^\infty}\|\nabla u_{h_{\varepsilon}}^\varepsilon\|_{L^2}\|\nabla \phi\|_{L^2}ds\nonumber\\
&\leq \|\phi\|_{H^1}\mathbb{E}\left(\sup_{t\in [0,T]}\|\nabla u_{h_{\varepsilon}}^\varepsilon,\nabla u_h\|_{L^2}\cdot\int_0^t\|\nabla (u_{h_{\varepsilon}}^\varepsilon- u_h) \|_{L^2}ds\right)\nonumber\\
&\leq C\mathbb{E}\int_0^t\|\nabla (u_{h_{\varepsilon}}^\varepsilon- u_h) \|_{L^2}^2ds.
\end{align}
The term $J_3$ also can be controlled as
\begin{align}\label{equ4.14}
\mathbb{E}|J_3|&\leq \|\phi\|_{L^2}\mathbb{E}\int_0^t\|\nabla (u_{h_{\varepsilon}}^\varepsilon- u_h) \|_{L^2}\|u_{h_{\varepsilon}}^\varepsilon, u_h\|_{H^1}^2ds\nonumber\\
&\leq  C\mathbb{E}\int_0^t\|\nabla (u_{h_{\varepsilon}}^\varepsilon- u_h) \|_{L^2}^2ds.
\end{align}
By the It\^{o} isometry formula and (\ref{equ2.9}),
\begin{align}\label{equ4.15}
\mathbb{E}|J_4|&\leq\sqrt{\varepsilon}\mathbb{E}\left(\int_0^t(u_{h_{\varepsilon}}\times G,{\rm I}_A\phi)^2ds\right)^\frac{1}{2}\nonumber\\
&\leq \sqrt{\varepsilon}\|\phi\|_{L^2}\mathbb{E}\left(\int_0^t\|u_{h_{\varepsilon}}\times G\|_{L_2(\mathcal{H}_0;L^2)}^2ds\right)^\frac{1}{2}\leq C\sqrt{\varepsilon}.
\end{align}
Decompose term $\mathbb{E}|J_5|$ we have
\begin{align}\label{equ4.16}
\mathbb{E}|J_5|&=\mathbb{E}\left|\int_0^t((u_{h_{\varepsilon}}^\varepsilon-u_h)\times G h_{\varepsilon}, {\rm I}_A\phi)ds+
\int_0^t ((u_h\times G)(h_\varepsilon-h),{\rm I}_A\phi)ds\right|\nonumber\\ &\leq\mathbb{E}\left|\int_0^t((u_{h_{\varepsilon}}^\varepsilon-u_h)\times G h_{\varepsilon}, {\rm I}_A\phi)ds\right|+
\mathbb{E}\left|\int_0^t ((u_h\times G)(h_\varepsilon-h),{\rm I}_A\phi)ds\right|\nonumber\\ &\leq C\mathbb{E}\int_0^t\|\nabla (u_{h_{\varepsilon}}^\varepsilon- u_h) \|_{L^2}^2ds+\mathbb{E}\left|\int_0^t (h_\varepsilon-h,u_h\times G^*\phi)ds\right|.
\end{align}
Next, we show that the second term on right hand side of (\ref{equ4.16}) goes to $0$, as $\varepsilon\rightarrow 0+$. Using the definition of operator $\mathcal{H}_{0}$ and (\ref{equ2.9}),
\begin{eqnarray}\label{equ4.17}
\int_0^t\|u_h\times G^*\phi\|_{\mathcal{H}^0}^2ds\leq \int_0^t\|u_h\times G^*\|_{L_Q(\mathcal{H}_0;H^1)}^2\|\phi\|_{H^1}^2ds\leq C.
\end{eqnarray}
We have $h_{\varepsilon}\rightarrow h$ weakly in $L^2(0,T;\mathcal{H}_{0})$ as $\varepsilon\rightarrow\infty$ by assumption, then
\begin{eqnarray*}
\int_0^t (h_\varepsilon-h,u_h\times G^*\phi)ds\rightarrow 0, \mathbb{P}-a.s.
\end{eqnarray*}
Therefore, by the Vitali convergence theorem, to get
\begin{eqnarray}\label{equ4.18}
\mathbb{E}\left|\int_0^t (h_\varepsilon-h,u_h\times G^*\phi)ds\right|\rightarrow 0, ~{\rm as}~\varepsilon\rightarrow\infty.
\end{eqnarray}
All the estimates (\ref{equ4.12})-(\ref{equ4.16}) and (\ref{equ4.18}) imply that
\begin{eqnarray}\label{equ4.19}
&&\mathbb{E}\bigg[(u_{h_\varepsilon}^\varepsilon,{\rm I}_A\phi)-\bigg(\nu_1\int_0^t(\triangle u_{h}, {\rm I}_A\phi)ds+\gamma\int_0^t(u_h\times\triangle u_h,{\rm I}_A\phi)ds\nonumber\\ &&\qquad-\nu_2\int_{0}^{t}((1+\mu|u_h|^2)u_h,{\rm I}_A\phi)ds+\int_0^t(u_h\times G h,{\rm I}_A\phi)ds\bigg)\bigg]\rightarrow 0.
\end{eqnarray}

On the other hand, since $u_{h_{\varepsilon}}^\varepsilon\in L^p(\Omega, L^\infty(0,T;H^1))$, we have as $\varepsilon\rightarrow 0$,
\begin{eqnarray*}
\sup_{t\in [0,T]}|(u_{h_{\varepsilon}}^\varepsilon-u_h,1_A\phi)|\rightarrow 0,~ \mathbb{P}\mbox{-a.s.}
\end{eqnarray*}
this together with the dominated convergence theorem, to deduce
\begin{eqnarray}\label{equ4.20}
\mathbb{E}\left(\sup_{t\in [0,T]}|(u_{h_{\varepsilon}}^\varepsilon-u_h,1_A\phi)|\right)\rightarrow 0.
\end{eqnarray}
We may infer from (\ref{equ4.19}) and (\ref{equ4.20}) that $u_h$ is a solution of system (\ref{equ4.2}).

By (\ref{equ2.9}), the H\"{o}lder inequality and the fact that $h\in \mathcal{A}_{M}$, we have
\begin{eqnarray}\label{equ4.21}
&&{\rm I}_{B_{N}^{\varepsilon}(T)}\sup_{t\in [0,T]}\left|\int_{0}^{t}(\nabla (u_{h}\times G)(h_{\varepsilon}-h),\nabla u_{h_{\varepsilon}}^\varepsilon-\nabla u_h)ds\right|\nonumber\\
&&\leq C\int_{0}^{T}{\rm I}_{B_{N}^{\varepsilon}(T)}\|\nabla u_{h_{\varepsilon}}^\varepsilon-\nabla u_h\|_{L^{2}}\|u_{h}\times G\|_{L_Q(\mathcal{H}_0;H^{1})}\|h_{\varepsilon}-h\|_{\mathcal{H}_{0}}dt\nonumber\\
&&\leq C\rm I}_{B_{N}^{\varepsilon}(T)}\sup_{t\in [0,T]}(1+\|u_{h}\|_{H^{1}}){\left(\int_{0}^{T}\|\nabla u_{h_{\varepsilon}}^\varepsilon-\nabla u_h\|_{L^{2}}^{2}dt\right)^{\frac{1}{2}}\left(\int_{0}^{T}\|h_{\varepsilon}-h\|_{\mathcal{H}_{0}}^2dt\right)^{\frac{1}{2}}\nonumber\\
&&\leq C(M,N)\left(\int_{0}^{T}\|\nabla u_{h_{\varepsilon}}^\varepsilon-\nabla u_h\|_{L^{2}}^{2}dt\right)^{\frac{1}{2}}\rightarrow 0,~as~\varepsilon\rightarrow 0.
\end{eqnarray}
Finally, the desired result follows from (\ref{equ4.12}).

In addition, we have by (\ref{equ2.9}) and the H\"{o}lder inequality,
\begin{eqnarray*}
&&\mathbb{E}\left({\rm I}_{B_{N}^{\varepsilon}(T)}\int_{0}^{T}((u_{h}\times G)(h_{\varepsilon}-h),\triangle V_{\varepsilon} )dt\right)^2\\
&&\leq \mathbb{E}\left({\rm I}_{B_{N}^{\varepsilon}(T)}\int_{0}^{T}\|\triangle V_\varepsilon\|_{L^2}\|u_h\times G\|_{L_Q(\mathcal{H}_0;L^2)}\|h_\varepsilon-h\|_{\mathcal{H}_0}dt\right)^2\\
&&\leq \mathbb{E}\left({\rm I}_{B_{N}^{\varepsilon}(T)}\sup_{t\in[0,T]}\|u_h\|_{L^2}\int_{0}^{T}\|\triangle V_\varepsilon\|_{L^2}\|h_\varepsilon-h\|_{\mathcal{H}_0}dt\right)^2\\
&&\leq \mathbb{E}\left({\rm I}_{B_{N}^{\varepsilon}(T)}\sup_{t\in[0,T]}\|u_h\|_{L^2}^2\int_{0}^{T}\|\triangle V_\varepsilon\|_{L^2}^2dt\int_{0}^{T}\|h_\varepsilon-h\|_{\mathcal{H}_0}^2dt\right)\\
&&\leq C(N,M),
\end{eqnarray*}
this together with  Claim 2 and the Vitali convergence theorem, we may infer
\begin{eqnarray}\label{equ4.22}
\lim_{\varepsilon\rightarrow 0}\mathbb{E}\left({\rm I}_{B_{N}^{\varepsilon}(T)}\left|\int_{0}^{T}(\nabla (u_{h}\times G)(h_{\varepsilon}-h),\nabla V_{\varepsilon} )dt\right|\right)=0.
\end{eqnarray}
Taking expectation on (\ref{equ4.9}), considering (\ref{equ4.10}), (\ref{equ4.11}) and (\ref{equ4.22}), we obtain
\begin{eqnarray}\label{equ4.23}
\mathbb E\left[{\rm I}_{B_{N}^{\varepsilon}(T)}\left(\sup_{t\in[0,T]}\|\nabla V_{\varepsilon}(t)\|_{L^{2}}^{2}
+\nu_{1}\int_{0}^{T}\|\triangle V_{\varepsilon}(s)\|_{L^{2}}^{2}dt\right)\right]\rightarrow 0,~ {\rm as }~ \varepsilon\rightarrow 0.
\end{eqnarray}
The Chebyshev inequality yields that for any $\delta>0$,
\begin{eqnarray*}
&&\mathbb P\left(\sup_{t\in[0,T]}\|\nabla V_{\varepsilon}(t)\|_{L^{2}}^{2}+\nu_{1}\int_{0}^{T}\|\triangle V_{\varepsilon}(s)\|_{L^{2}}^{2}dt>\delta\right)\\
&&\leq\mathbb P(B_{N}^{\varepsilon}(T)^{c})+\frac{1}{\delta}\mathbb{E}\left[{\rm I}_{B_{N}^{\varepsilon}(T)}\left(\sup_{t\in[0,T]}\|\nabla V_{\varepsilon}(t)\|_{L^{2}}^{2}
+\nu_{1}\int_{0}^{T}\|\triangle V_{\varepsilon}(s)\|_{L^{2}}^{2}dt\right)\right].
\end{eqnarray*}
By Claim 1, we know that for any $\delta_1>0$, there exists $N_0$, for all $N>N_0$, such that  $\mathbb{P}(B_{N}^{\varepsilon}(T)^{c})<\delta_{1}$. On the other hand, the convergence (\ref{equ4.8}) implies that for any $\varepsilon_{1}>0$ and fixed $N$, there exists $\tilde{\varepsilon}$, such that for all $\varepsilon\in [0,\tilde{\varepsilon}]$, we have
\begin{eqnarray*}
\mathbb{E}\left[{\rm I}_{B_{N}^{\varepsilon}(T)}\left(\sup_{t\in[0,T]}\|\nabla V_{\varepsilon}(t)\|_{L^{2}}^{2}+\nu_{1}\int_{0}^{T}\|\triangle V_{\varepsilon}(s)\|_{L^{2}}^{2}dt\right)\right]<\delta \varepsilon_{1}.
\end{eqnarray*}
Therefore, $\forall \delta>0$, $\mathbb P\left(\sup_{t\in[0,T]}\|\nabla V_{\varepsilon}(t)\|_{L^{2}}^{2}+\nu_{1}\int_{0}^{T}\|\triangle V_{\varepsilon}(s)\|_{L^{2}}^{2}dt>\delta\right)\rightarrow 0$ as $\varepsilon\rightarrow 0^{+}$, that is, in the sense of probability $u^{\varepsilon}_{h_{\varepsilon}}\rightarrow u_{h}$ as $\varepsilon\rightarrow 0^{+}$.

This completes the proof of Lemma \ref{lem4.1}.
\end{proof}

The following compactness result is another important factor which allow us to establish the large deviation principle for $u_{\varepsilon}$.
\begin{lemma}\label{lem4.2} For every $M<\infty$, let $K_{M}=\{u_{h}:h\in S_{M}\}$ where $u_{h}$ is the unique solution in Polish space $X$. Then, $K_{M}$ is a compact set of $X$.
\end{lemma}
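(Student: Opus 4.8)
The plan is to establish compactness of $K_M$ in $X = L^\infty(0,T;H^1)\cap L^2(0,T;H^2)$ by the standard two-part scheme: first show that any sequence $\{u_{h_n}\}$ with $h_n \in S_M$ admits a subsequence converging in $X$ to some $u_h$ with $h \in S_M$, thereby proving sequential compactness (hence compactness, $X$ being a metric space). Since $S_M$ is a bounded, closed, convex subset of the Hilbert space $L^2(0,T;\mathcal{H}_0)$, it is weakly compact, so after passing to a subsequence we may assume $h_n \rightharpoonup h$ weakly in $L^2(0,T;\mathcal{H}_0)$ with $h \in S_M$. It remains to extract a subsequence along which $u_{h_n} \to u_h$ strongly in $X$, where $u_h$ solves the deterministic controlled system \eqref{equ4.2}.

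First I would record the uniform a priori bounds: arguing exactly as in Lemma \ref{lem3.1} with $\varepsilon = 0$ (no stochastic integral, so no Burkholder–Davis–Gundy step is needed and all estimates are pathwise), one obtains
\begin{equation*}
\sup_{h\in S_M}\left(\sup_{t\in[0,T]}\|\nabla u_h\|_{L^2}^2 + \nu_1\int_0^T\|\triangle u_h\|_{L^2}^2\,dt\right) \le C(M,T,\mathcal{D},u(0)),
\end{equation*}
and, from \eqref{equ4.2} read as $\frac{d}{dt}u_h = \nu_1\triangle u_h + \gamma u_h\times\triangle u_h - \nu_2(1+\mu|u_h|^2)u_h + (u_h\times G)h$ together with \eqref{equ2.4}, \eqref{equ2.9} and the bound above, a uniform bound for $\frac{d}{dt}u_h$ in $L^2(0,T;L^2)$ (here $h\in L^2(0,T;\mathcal{H}_0)$ with $\|h\|\le M$ is used to control the last term). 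Consequently $\{u_{h_n}\}$ is bounded in $L^\infty(0,T;H^1)\cap L^2(0,T;H^2)\cap W^{1,2}(0,T;L^2)$. By the Aubin–Lions–Simon lemma, the embedding $\{v : v\in L^2(0,T;H^2),\ v'\in L^2(0,T;L^2)\} \hookrightarrow L^2(0,T;H^1)$ is compact, so along a subsequence $u_{h_n} \to u$ strongly in $L^2(0,T;H^1)$, and by interpolation with the $L^\infty(0,T;H^1)$ bound also in $C([0,T];L^2)$; in addition $u_{h_n} \rightharpoonup u$ weakly in $L^2(0,T;H^2)$ and weak-$*$ in $L^\infty(0,T;H^1)$, and $u_{h_n}' \rightharpoonup u'$ weakly in $L^2(0,T;L^2)$.

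Next I would identify the limit: passing to the limit in the weak formulation of \eqref{equ4.2} against test functions $\varphi \in H^1$ — the linear terms pass by weak convergence, the cross term $u_{h_n}\times\triangle u_{h_n}$ and the cubic term $|u_{h_n}|^2 u_{h_n}$ pass because of the \emph{strong} $L^2(0,T;H^1)$ convergence combined with the uniform $H^2$ (resp.\ $H^1$) bound, exactly as in the estimates \eqref{equ4.12}–\eqref{equ4.14}, and the control term $(u_{h_n}\times G)h_n \rightharpoonup (u_h\times G)h$ by writing $(u_{h_n}\times G)h_n - (u_h\times G)h = ((u_{h_n}-u_h)\times G)h_n + (u_h\times G)(h_n - h)$, the first piece $\to 0$ strongly and the second $\rightharpoonup 0$ weakly using \eqref{equ2.9}, \eqref{equ4.17}, and $h_n \rightharpoonup h$, exactly as in \eqref{equ4.16}–\eqref{equ4.18}. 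Hence $u$ solves \eqref{equ4.2} with this $h$, and by the uniqueness of solutions to \eqref{equ4.2} (same pathwise uniqueness argument as in Section \ref{sec32}, specialized to $\varepsilon=0$), $u = u_h \in K_M$; moreover uniqueness guarantees the whole sequence, not just a subsequence, converges. Finally I would upgrade the convergence from $L^2(0,T;H^1)$ to the full norm of $X$: subtracting \eqref{equ4.2} for $u_{h_n}$ and for $u_h$ and running the energy estimate for $\|\nabla(u_{h_n}-u_h)\|_{L^2}^2$ as in \eqref{equ4.3}–\eqref{equ4.7} (now with $\varepsilon=0$, so no stochastic term survives), Gronwall together with the already-established $L^2(0,T;H^1)$ convergence and the $h_n\rightharpoonup h$ term \eqref{equ4.21} yields $\sup_{t\in[0,T]}\|\nabla(u_{h_n}-u_h)\|_{L^2}^2 + \nu_1\int_0^T\|\triangle(u_{h_n}-u_h)\|_{L^2}^2\,dt \to 0$, i.e.\ $u_{h_n}\to u_h$ in $X$.

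The main obstacle is the passage to the limit in the nonlinear terms $u_{h_n}\times\triangle u_{h_n}$ and $|u_{h_n}|^2u_{h_n}$: the highest-order term $\triangle u_{h_n}$ only converges weakly in $L^2(0,T;L^2)$, so one genuinely needs the strong $L^2(0,T;H^1)$ convergence (from Aubin–Lions) to handle the product — this is precisely why the uniform $W^{1,2}(0,T;L^2)$ bound on $u_h$, which requires using $h\in S_M$ to bound $(u_h\times G)h$ in $L^2(0,T;L^2)$ via \eqref{equ2.9} and Cauchy–Schwarz, is essential. A secondary delicate point is that the control term is only weakly convergent, so the final strong-$X$ upgrade cannot be obtained by a naive energy estimate alone; one must isolate the $(u_h\times G)(h_n-h)$ contribution and use its weak convergence against $\triangle(u_{h_n}-u_h)$ combined with the uniform $H^2$ bound, as in \eqref{equ4.21}–\eqref{equ4.22}.
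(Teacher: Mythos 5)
Your argument is essentially the paper's own proof: uniform bounds in $\mathcal{C}([0,T];H^1)\cap L^2(0,T;H^2)\cap W^{1,2}(0,T;L^2)$, Aubin--Lions to extract a subsequence converging strongly in $L^2(0,T;H^1)$, identification of the limit as the solution $u_h$ of \eqref{equ4.2} exactly as in Lemma \ref{lem4.1} using the weak convergence of $h_n$, and finally the energy/Gronwall estimate on $u_{h_n}-u_h$ with the control term handled as in \eqref{equ4.21}--\eqref{equ4.22} to upgrade to convergence in $X$; this is correct. The only (harmless) slip is the side remark that uniqueness forces the whole sequence to converge --- it does not, since $h_n$ itself need not converge weakly without passing to a subsequence, but subsequential convergence to an element of $K_M$ is all that compactness requires.
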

\begin{proof} Let $\{u_{h_n}\}$ be a sequence in $K_{M}$ corresponding to solutions of the following system with controlled terms $\{h_{n}\}_{n\geq 1}$ in $S_{M}$:
\begin{eqnarray}\label{equ4.24}
du_{h_n}-\nu_{1}\triangle u_{h_{n}}dt=\gamma u_{h_{n}}\times \triangle u_{h_{n}}dt-\nu_{2}(1+\mu|u_{h_{n}}|^{2})u_{h_{n}}dt+(u_{h_{n}}\times G)h_{n}dt.
\end{eqnarray}
Note that the solution $u_{h_n}\in \mathcal{C}([0,T];H^1)\cap L^{2}(0,T;H^2)\cap W^{1,2}(0,T;L^2)$ is uniform bounded in $n$. Then the Aubin-Lions lemma gives that there exists a function $u_h\in L^2(0,T;H^1)$ such that $u_{h_n}\rightarrow u_h$ in $L^2(0,T;H^1)$. By the similar reason as in Lemma {\ref{lem4.1}}, we may infer that $u_h$ is the solution to system (\ref{equ4.2}) using the weak convergence of $h_n$. We now show that the subsequence of the solutions $u_{h_{n}}$ still denoted by $u_{h_n}$ converges in $X$ to $u_h$. The difference of the solutions $V_n:=u_{h_n}-u_h$ satisfies
\begin{align}\label{equ4.25}
dV_n-\nu_1 \triangle V_n dt&=\gamma(V_n\times \triangle u_{h_n}+u_h\times \triangle V_n)dt-\nu_{2}(\mu(|u_{h_n}|^2-|u_h|^2)u_{h_n}\nonumber\\
&+(1+\mu|u_h|^2)V_n)dt+((u_{h_n}\times G)h_n-(u_h\times G)h)dt.
\end{align}
Using the operator $\partial$ on both sides of (\ref{equ4.25}), then taking the inner product with $\partial V_n$, we have
\begin{eqnarray}\label{equ4.26}
&&d\|\nabla V_{n}\|_{L^{2}}^{2}+2\nu_{1}\|\triangle V_{n}\|_{L^{2}}^{2}dt=-2\gamma(V_{n}\times \triangle u_{h_{n}}+u_{h}\times \triangle V_{n}, \triangle V_{n})dt\nonumber\\
&&\qquad+2\nu_{2}\mu((|u_{h_{n}}|^{2}-|u_{h}|^{2})u_{h_{n}}, \triangle V_{n})dt+2\nu_{2}((1+\mu|u_{h}|^{2})V_{n},\triangle V_{n})dt\nonumber\\
&&\qquad-2((u_{h_n}\times G)h_n-(u_h\times G)h, \triangle V_{n})dt.
\end{eqnarray}
Integrating on interval $[0,T]$ in (\ref{equ4.26}), using the similar estimates as in (\ref{equ4.4})-(\ref{equ4.7}), we obtain
\begin{eqnarray*}
&&\sup_{t\in [0,T]}\|\nabla V_{n}\|_{L^{2}}^{2}+\nu_{1}\int_0^T\|\triangle V_{n}\|_{L^{2}}^{2}dt\nonumber\\
&&\leq C\int_{0}^{T}(1+\|\triangle u_{h_{n}}\|_{L^{2}}^{2}+\|(u_{h_{n}},u_{h})\|_{H^{1}}^{4})\|V_{n}\|_{H^{1}}^{2}dt\nonumber\\
&&\quad+\int_{0}^{T}((u_{h}\times G)(h_{n}-h),\triangle V_{n} )dt.
\end{eqnarray*}
Then, we use the Gronwall Lemma to conclude
\begin{eqnarray*}
\sup_{t\in [0,T]}\|\nabla V_{n}\|_{L^{2}}^{2}+\nu_{1}\int_0^T\|\triangle V_{n}\|_{L^{2}}^{2}dt\leq e^{C(M,T)}\left|\int_{0}^{T}((u_{h}\times G)(h_{n}-h),\triangle V_{n} )dt\right|.
\end{eqnarray*}
By the same as (\ref{equ4.21}), we have
\begin{eqnarray}\label{equ4.27}
\left|\int_{0}^{T}((u_{h}\times G)(h_{n}-h),\triangle V_{n} )dt\right|\rightarrow 0,~ {\rm as}~ n\rightarrow \infty,
\end{eqnarray}
which leads to
\begin{eqnarray*}
\sup_{t\in [0,T]}\|\nabla V_{n}\|_{L^{2}}^{2}+\nu_{1}\int_0^T\|\triangle V_{n}\|_{L^{2}}^{2}dt\rightarrow 0, ~{\rm as}~ n\rightarrow \infty.
\end{eqnarray*}
This shows that every sequence in $K_{M}$ has a convergent subsequence. Hence $K_{M}$ is a compact subset of space $X$.
\end{proof}

\noindent {\bf Proof of Theorem 2.1} Combining  Lemmas \ref{lem4.1} and \ref{lem4.2}, we get the desired result of Theorem \ref{the2.1} using Theorem \ref{the4.1}.

\section{Central limit theorem}\label{sec5}

In this section, we shall establish the central limit theorem. Since $V_\varepsilon:=\frac{u_{\varepsilon}-u_{0}}{\sqrt{\varepsilon}}$ satisfies the system
\begin{eqnarray}\label{equ5.1}
&&dV_\varepsilon-\nu_1 \triangle V_\varepsilon dt\nonumber\\
&&=\gamma(V_\varepsilon\times \triangle u_\varepsilon+u_0\times \triangle V_\varepsilon)dt-\nu_2V_\varepsilon dt-\nu_2\mu(\varepsilon |V_\varepsilon|^{2}V_\varepsilon\nonumber\\
&&\quad+(u_{0}\cdot V_\varepsilon)u_\varepsilon+(u_\varepsilon\cdot V_\varepsilon)u_\varepsilon+(u_0\cdot u_\varepsilon)V_\varepsilon)dt+u_\varepsilon\times Gd\mathcal{W},
\end{eqnarray}
where $u_0$ and $u_\varepsilon$ are the solutions of systems (\ref{equ1.1}) and (\ref{equ1.4}) respectively, and satisfy the following uniform energy estimates:
\begin{eqnarray}
&&\mathbb{E}\left(\sup_{t\in[0,T]}\|\nabla u_\varepsilon\|_{L^{2}}^{2p}\right)+\mathbb{E}\int_{0}^{T}\|\nabla u_\varepsilon\|_{L^{2}}^{2p-2}\|\triangle u_\varepsilon\|_{L^{2}}^2dt\leq C_1,\label{equ5.2}\\
&&\sup_{t\in[0,T]}\|\nabla u_0\|_{L^{2}}^{2p}+\left(\int_{0}^{T}\|\triangle u_0\|_{L^{2}}^2dt\right)^{p}\leq C_2,\; \forall p\geq 1, \label{equ5.3}
\end{eqnarray}
the constant $C_{1}$ is independent of $\varepsilon$. The energy inequality (\ref{equ5.2}) is a consequence of Lemma \ref{lem3.1} applying to $h=0$. The following lemma gives the bound uniformly in $\varepsilon$, which is the cornerstone of accomplishing the central limit theorem.

\begin{lemma}\label{lem5.1} Suppose that the operator $G$ satisfies the condition (\ref{equ1.2}), and $\tau_R$ is a stopping time defined by
\begin{eqnarray*}
\tau_R=\inf\left\{t>0:\sup_{s\in [0,t]}\|\nabla u_\varepsilon\|_{L^2}^{2p}+\left(\int_{0}^{t}\|\triangle u_\varepsilon\|_{L^{2}}^2ds\right)^{p}>R\right\}.
\end{eqnarray*}
Then, there exists a constant C which depends on $R, T, \mathcal{D}, p$ but independent of $\varepsilon$ such that
\begin{eqnarray*}
\sup_{\varepsilon\in(0,1]}\mathbb{E}\left(\sup_{s\in[0,t\wedge\tau_{R}]}\|\nabla V_\varepsilon\|_{L^{2}}^{2p}\right)+\sup_{\varepsilon\in(0,1]}\nu_1\mathbb{E}\int_{0}^{t\wedge\tau_{R}}\|\nabla V_\varepsilon\|_{L^2}^{2(p-1)}\|\triangle V_\varepsilon\|_{L^2}^2 ds\leq C,
\end{eqnarray*}
for all $t\in [0,T]$ and $p\geq 1$.
\end{lemma}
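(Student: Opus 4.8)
The plan is to apply the It\^o formula to $\|\nabla V_\varepsilon\|_{L^2}^{2p}$ along equation (\ref{equ5.1}), with the computation carried out on the Galerkin approximations and then passed to the limit, exactly as in Lemma \ref{lem3.1}. Testing (\ref{equ5.1}) formally against $-\triangle V_\varepsilon$ and applying $d(\cdot)^p$ one is led to
\begin{eqnarray*}
&&d\|\nabla V_\varepsilon\|_{L^2}^{2p}+2p\nu_1\|\nabla V_\varepsilon\|_{L^2}^{2p-2}\|\triangle V_\varepsilon\|_{L^2}^2\,dt
=-2p\|\nabla V_\varepsilon\|_{L^2}^{2p-2}(\mathcal R_\varepsilon,\triangle V_\varepsilon)\,dt\\
&&\quad+\,p\|\nabla V_\varepsilon\|_{L^2}^{2p-2}\|u_\varepsilon\times G\|_{L_Q(\mathcal{H}_0;H^1)}^2\,dt
+2p(p-1)\|\nabla V_\varepsilon\|_{L^2}^{2p-4}(u_\varepsilon\times G,\triangle V_\varepsilon)^2\,dt\\
&&\quad-\,2p\|\nabla V_\varepsilon\|_{L^2}^{2p-2}(u_\varepsilon\times G,\triangle V_\varepsilon)\,d\mathcal{W},
\end{eqnarray*}
where $\mathcal R_\varepsilon$ denotes the drift on the right hand side of (\ref{equ5.1}). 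I would then treat $(\mathcal R_\varepsilon,\triangle V_\varepsilon)$ term by term: the contribution $(u_0\times\triangle V_\varepsilon,\triangle V_\varepsilon)=0$ by (\ref{equ2.3}); the term $-\nu_2 V_\varepsilon$ produces the good-signed $-2p\nu_2\|\nabla V_\varepsilon\|_{L^2}^{2p}$, which is dropped; and the cubic term $\varepsilon|V_\varepsilon|^2V_\varepsilon$ has, after integration by parts, a favourable sign,
\begin{eqnarray*}
(\varepsilon|V_\varepsilon|^2V_\varepsilon,\triangle V_\varepsilon)=-\varepsilon\int_{\mathcal D}\big(|V_\varepsilon|^2|\nabla V_\varepsilon|^2+2|V_\varepsilon\cdot\nabla V_\varepsilon|^2\big)\,dx\le0,
\end{eqnarray*}
so it too is dropped rather than estimated. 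The remaining terms are controlled by H\"older's inequality, the one-dimensional interpolation (\ref{equ2.2}), Poincar\'e's inequality and Young's inequality: from $(V_\varepsilon\times\triangle u_\varepsilon,\triangle V_\varepsilon)\le\|\triangle V_\varepsilon\|_{L^2}\|\triangle u_\varepsilon\|_{L^2}\|V_\varepsilon\|_{L^\infty}$ one gets $|(V_\varepsilon\times\triangle u_\varepsilon,\triangle V_\varepsilon)|\le\frac{\nu_1}{C}\|\triangle V_\varepsilon\|_{L^2}^2+C\|\triangle u_\varepsilon\|_{L^2}^2\|\nabla V_\varepsilon\|_{L^2}^2$; the three remaining cubic terms, each genuinely linear in $V_\varepsilon$, are bounded by $\frac{\nu_1}{C}\|\triangle V_\varepsilon\|_{L^2}^2+C(\|u_0\|_{H^1}^2+\|u_\varepsilon\|_{H^1}^2)^2\|\nabla V_\varepsilon\|_{L^2}^2$; and the two It\^o--correction terms are bounded via (\ref{equ2.9}) and $(u_\varepsilon\times G,\triangle V_\varepsilon)^2\le C\|\nabla V_\varepsilon\|_{L^2}^2\|u_\varepsilon\|_{H^1}^2$ by $C\|\nabla V_\varepsilon\|_{L^2}^{2p-2}\|u_\varepsilon\|_{H^1}^2\le C\|\nabla V_\varepsilon\|_{L^2}^{2p}+C\|u_\varepsilon\|_{H^1}^{2p}$. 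Choosing the constants so that the $\|\triangle V_\varepsilon\|_{L^2}^2$ terms are absorbed into half the dissipation, and recalling $V_\varepsilon(0)=0$, this yields a differential inequality
\begin{eqnarray*}
d\|\nabla V_\varepsilon\|_{L^2}^{2p}+p\nu_1\|\nabla V_\varepsilon\|_{L^2}^{2p-2}\|\triangle V_\varepsilon\|_{L^2}^2\,dt
\le\big(\Lambda_\varepsilon(t)\|\nabla V_\varepsilon\|_{L^2}^{2p}+C\|u_\varepsilon\|_{H^1}^{2p}\big)\,dt+dM_\varepsilon(t),
\end{eqnarray*}
with $\Lambda_\varepsilon(t)=C\big(1+(\|u_0\|_{H^1}^2+\|u_\varepsilon\|_{H^1}^2)^2+\|\triangle u_\varepsilon\|_{L^2}^2\big)$ and $M_\varepsilon$ the stochastic integral above.

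Next I would exploit the stopping time. On $[0,t\wedge\tau_R]$ one has $\|u_\varepsilon\|_{H^1}^2\le CR^{1/p}$ and $\int_0^{t\wedge\tau_R}\|\triangle u_\varepsilon\|_{L^2}^2\,ds\le R^{1/p}$ by the definition of $\tau_R$, while $\|u_0\|_{H^1}^2\le CC_2^{1/p}$ by (\ref{equ5.3}); hence $\int_0^{t\wedge\tau_R}\Lambda_\varepsilon(s)\,ds\le K$ for a deterministic constant $K=K(R,T,\mathcal D,p)$. Mimicking the weight device of Section \ref{sec32}, set $\varphi(t)=\exp\!\big(-\int_0^{t\wedge\tau_R}\Lambda_\varepsilon(s)\,ds\big)$, so that $e^{-K}\le\varphi(t)\le1$, and apply the It\^o product rule to $\varphi(t)\|\nabla V_\varepsilon(t)\|_{L^2}^{2p}$; the term $\Lambda_\varepsilon\|\nabla V_\varepsilon\|_{L^2}^{2p}$ cancels, leaving
\begin{eqnarray*}
d\big(\varphi\|\nabla V_\varepsilon\|_{L^2}^{2p}\big)+p\nu_1\varphi\|\nabla V_\varepsilon\|_{L^2}^{2p-2}\|\triangle V_\varepsilon\|_{L^2}^2\,dt
\le C\varphi\|u_\varepsilon\|_{H^1}^{2p}\,dt+\varphi\,dM_\varepsilon.
\end{eqnarray*}
Taking the supremum over $[0,t\wedge\tau_R]$ and expectations, the first term on the right is $\le C\mathbb E\int_0^T\|u_\varepsilon\|_{H^1}^{2p}\,ds\le C$ by (\ref{equ5.2}); the martingale term is handled by the Burkholder--Davis--Gundy inequality (\ref{equ2.7}), and after writing $\varphi^2\|\nabla V_\varepsilon\|_{L^2}^{4p-2}\|u_\varepsilon\|_{H^1}^2=\big(\varphi\|\nabla V_\varepsilon\|_{L^2}^{2p}\big)\big(\varphi\|\nabla V_\varepsilon\|_{L^2}^{2p-2}\|u_\varepsilon\|_{H^1}^2\big)$ and using Young's inequality it is bounded by $\frac12\mathbb E\sup_{[0,t\wedge\tau_R]}\big(\varphi\|\nabla V_\varepsilon\|_{L^2}^{2p}\big)$, which is absorbed, plus $C\mathbb E\int_0^{t\wedge\tau_R}\varphi\|\nabla V_\varepsilon\|_{L^2}^{2p}\,ds+C\mathbb E\int_0^T\|u_\varepsilon\|_{H^1}^{2p}\,ds$. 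Setting $f(t)=\mathbb E\sup_{s\le t\wedge\tau_R}\big(\varphi(s)\|\nabla V_\varepsilon(s)\|_{L^2}^{2p}\big)$ this gives $f(t)\le C+C\int_0^t f(s)\,ds$, and Gronwall's lemma yields $f(T)\le C$. Since $\varphi^{-1}\le e^{K}$ uniformly, $\mathbb E\sup_{s\le t\wedge\tau_R}\|\nabla V_\varepsilon\|_{L^2}^{2p}\le e^{K}f(T)\le C$, and $\nu_1\mathbb E\int_0^{t\wedge\tau_R}\|\nabla V_\varepsilon\|_{L^2}^{2(p-1)}\|\triangle V_\varepsilon\|_{L^2}^2\,ds\le C$ follows from $\varphi\ge e^{-K}$ together with the left hand side; all constants depend only on $R,T,\mathcal D,p$.

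I expect the main obstacle to be the high-nonlinearity structure rather than the bookkeeping. The cross term $(V_\varepsilon\times\triangle u_\varepsilon,\triangle V_\varepsilon)$ forces the coefficient $\|\triangle u_\varepsilon\|_{L^2}^2$, which only lies in $L^1_t$ and is not pathwise bounded, so a direct $\omega$-wise Gronwall fails; this is precisely why the truncation $\tau_R$ (which makes $\int_0^{t\wedge\tau_R}\|\triangle u_\varepsilon\|_{L^2}^2\,ds$ deterministically bounded) combined with the random weight $\varphi$ is indispensable. One must also recognise the favourable sign of the $\varepsilon|V_\varepsilon|^2V_\varepsilon$ contribution instead of estimating it, since a crude bound would produce a supercritical power of $\|\nabla V_\varepsilon\|_{L^2}$ that no Gronwall argument can close; and keeping every $V_\varepsilon$-term genuinely linear in $V_\varepsilon$ (treating $u_0$ and $u_\varepsilon$ as the uniformly bounded background fields, so that all $L^\infty$ factors fall on them) is what renders the rest of the cubic nonlinearity tractable.
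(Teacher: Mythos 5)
Your proposal is correct and follows essentially the same route as the paper: It\^o's formula applied to $\|\nabla V_\varepsilon\|_{L^2}^{2p}$, the cancellation $(u_0\times\triangle V_\varepsilon,\triangle V_\varepsilon)=0$, the favourable sign of the $\varepsilon|V_\varepsilon|^2V_\varepsilon$ and $-\nu_2 V_\varepsilon$ contributions, linear-in-$V_\varepsilon$ bounds on the remaining cubic terms, the Burkholder--Davis--Gundy inequality for the martingale, and a Gronwall argument whose constant is deterministic precisely because $\tau_R$ bounds $\int_0^{t\wedge\tau_R}\|\triangle u_\varepsilon\|_{L^2}^2\,ds$ and $\sup\|\nabla u_\varepsilon\|_{L^2}^2$ pathwise. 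Your explicit exponential weight $\varphi$ is just a more careful rendering of the paper's localized Gronwall step (the paper additionally truncates with a second stopping time $\tau_N$ on $\|\nabla V_\varepsilon\|_{L^2}^{2p}$ before absorbing the supremum, which you handle implicitly via the approximation-level computation), so the two arguments coincide in substance.
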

\begin{proof}Using the It\^{o} formula to function $\|\nabla V_\varepsilon\|_{L^{2}}^{2p}$,
\begin{eqnarray}\label{equ5.4}
&&d\|\nabla V_\varepsilon\|_{L^{2}}^{2p}+2p\nu_1\|\nabla V_\varepsilon\|_{L^2}^{2(p-1)}\|\triangle V_\varepsilon\|_{L^2}^2 dt\nonumber\\
&&=-2p\gamma\|\nabla V_\varepsilon\|_{L^2}^{2(p-1)}(V_\varepsilon\times \triangle u_\varepsilon+u_0\times \triangle V_\varepsilon, \triangle V_\varepsilon)dt\nonumber\\
&&\quad+2p\nu_2\|\nabla V_\varepsilon\|_{L^2}^{2(p-1)}(V_\varepsilon, \triangle V_\varepsilon)dt+2p\nu_2\mu\|\nabla V_\varepsilon\|_{L^2}^{2(p-1)}(\varepsilon |V_\varepsilon|^{2}V_\varepsilon,\triangle V_\varepsilon)dt\nonumber\\
&&\quad-2p\|\nabla V_\varepsilon\|_{L^2}^{2(p-1)}(u_\varepsilon\times Gd\mathcal{W},\triangle V_\varepsilon)\nonumber\\
&&\quad+2p\nu_2\mu\|\nabla V_\varepsilon\|_{L^2}^{2(p-1)}((u_{0}\cdot V_\varepsilon)u_\varepsilon+(u_\varepsilon\cdot V_\varepsilon)u_\varepsilon+(u_0\cdot u_\varepsilon)V_\varepsilon,\triangle V_\varepsilon)dt\nonumber\\
&&\quad+p\|\nabla V_\varepsilon\|_{L^2}^{2(p-1)}\| u_\varepsilon\times G\|_{L_Q(\mathcal{H}_0;H^1)}^{2}dt\nonumber\\
&&\quad+2p(p-1)\|\nabla V_\varepsilon\|_{L^2}^{2(p-2)}(u_\varepsilon\times G,\triangle V_\varepsilon)^2dt\nonumber\\
&&=J_1dt+\cdots+J_4d\mathcal{W}+\cdots+J_7dt.
\end{eqnarray}
By the H\"{o}lder inequality and Lemma \ref{lem2.1} (\ref{equ2.4}), we have
\begin{align}\label{equ5.5}
|J_1|&\leq 2p\gamma\|\nabla V_\varepsilon\|_{L^2}^{2(p-1)}\|\triangle u_\varepsilon\|_{L^2}\|\triangle V_\varepsilon\|_{L^2}\|V_\varepsilon\|_{H^1}\nonumber\\
&\leq \frac{p\nu_1}{2}\|\nabla V_\varepsilon\|_{L^2}^{2(p-1)}\|\triangle V_\varepsilon\|_{L^2}^2+C\|\triangle u_\varepsilon\|_{L^2}^2\|V_\varepsilon\|_{H^1}^{2p}.
\end{align}
Regarding the term $J_3$, we have
\begin{eqnarray}\label{equ5.6}
J_3=-2p\nu_2\varepsilon\|\nabla V_\varepsilon\|_{L^2}^{2(p-1)}(3\mu(|V_\varepsilon|^2,|\nabla V_\varepsilon|^2)+\|\nabla V_\varepsilon\|_{L^2}^2).
\end{eqnarray}
The term $J_{5}$ may be handled as,
\begin{align}\label{equ5.7}
|J_5|&\leq 2p\mu\varepsilon\|\nabla V_\varepsilon\|_{L^2}^{2(p-1)}\|\triangle V_\varepsilon\|_{L^2}\|V_\varepsilon\|_{L^2}\|(u_0, u_\varepsilon)\|_{L^\infty}^2\nonumber\\
&\leq \frac{p\nu_1}{2}\|\nabla V_\varepsilon\|_{L^2}^{2(p-1)}\|\triangle V_\varepsilon\|_{L^2}^2+C(\varepsilon)\|(u_0, u_\varepsilon)\|_{H^1}^4\|V_\varepsilon\|_{H^1}^{2p}.
\end{align}
The stochastic term, which may be treated by same way as in (\ref{equ3.7}), to yield
\begin{eqnarray}\label{equ5.8}
&&\mathbb{E}\left(\sup_{s\in[0,t\wedge \tau_{R}]}\left|\int_{0}^{s}\|\nabla V_\varepsilon\|_{L^2}^{2(p-1)}(u_\varepsilon\times G,\triangle V_\varepsilon)d\mathcal{W}\right|\right)\nonumber\\
&&\leq C\mathbb{E}\left(\int_{0}^{t\wedge \tau_{R}}\|\nabla V_\varepsilon\|_{L^2}^{4(p-1)}\|\nabla V_\varepsilon\|_{L^2}^{2}\|
u_\varepsilon\times G\|_{L_Q(\mathcal{H}_0;H^1)}^{2}dt\right)^\frac{1}{2}\nonumber\\
&&\leq C\mathbb{E}\left(\sup_{s\in[0,t\wedge \tau_{R}]}\|\nabla V_\varepsilon\|_{L^2}^{p}\right)\left(\int^{t\wedge \tau_{R}}_0\|\nabla V_\varepsilon\|_{L^2}^{2(p-1)}\|\nabla u_\varepsilon\|_{L^2}^2ds\right)^\frac{1}{2}\nonumber\\
&&\leq \frac{1}{2}\mathbb{E}\left(\sup_{t\in[0,t\wedge \tau_{R}]}\|\nabla V_\varepsilon\|_{L^2}^{2p}\right)+C\mathbb{E}\int^{t\wedge \tau_{R}}_0\|\nabla V_\varepsilon\|_{L^2}^{2p}ds\nonumber\\
&&\quad+C\mathbb{E}\int^{t\wedge \tau_{R}}_0\|\nabla u_\varepsilon\|_{L^2}^{2p}ds.
\end{eqnarray}
Also, similarly to estimate (\ref{equ3.11}),  we have
\begin{eqnarray}\label{equ5.9}
|J_6|\leq C\|\nabla V_\varepsilon\|_{L^2}^{2(p-1)}\|\nabla u_\varepsilon\|_{L_2}^{2}\leq C\|\nabla V_\varepsilon\|_{L^2}^{2p}+C\|\nabla u_\varepsilon\|_{L_2}^{2p},
\end{eqnarray}
as well as
\begin{eqnarray}\label{equ5.10}
|J_7|\leq C\|\nabla V_\varepsilon\|_{L^2}^{2(p-2)}\|\nabla V_\varepsilon\|_{L^2}^2\|u_\varepsilon\times G\|_{L_Q(\mathcal{H}_0;H^1)}^{2}\leq C\|\nabla V_\varepsilon\|_{L^2}^{2p}+C\|\nabla u_\varepsilon\|_{L^2}^{2p}.
\end{eqnarray}
Define the stopping time $\tau=\tau_{N}\wedge \tau_{R}$, where
\begin{eqnarray*}
\tau_{N}:=\inf\left\{t>0;\sup_{s\in [0,t]}\|\nabla V_\varepsilon\|_{L^2}^{2p}\geq N\right\}.
\end{eqnarray*}
Taking the supremum on interval $[0,t\wedge\tau]$ in (\ref{equ5.4}), and then taking expectation, combining the estimates (\ref{equ5.5})-(\ref{equ5.10}), to conclude
\begin{eqnarray*}
&&\mathbb{E}\left(\sup_{s\in[0,t\wedge\tau]}\|\nabla V_\varepsilon\|_{L^{2}}^{2p}\right)+\mathbb{E}\int_{0}^{t\wedge\tau}\nu_1\|\nabla V_\varepsilon\|_{L^2}^{2(p-1)}\|\triangle V_\varepsilon\|_{L^2}^2 ds\nonumber\\
&&\leq C\mathbb{E}\int_{0}^{t\wedge\tau}\|\nabla V_\varepsilon\|_{L^{2}}^{2p}ds+C\mathbb{E}\int_{0}^{t\wedge\tau}(1+\|\nabla u_\varepsilon\|_{L_2}^2)^{p}ds\nonumber\\
&&\quad+\mathbb{E}\int_{0}^{t\wedge\tau}(\|\triangle u_\varepsilon\|_{L^2}^2+\|u_0, u_\varepsilon\|_{H^1}^4)\|\nabla V_\varepsilon\|_{L^{2}}^{2p}ds.
\end{eqnarray*}
The Gronwall Lemma implies that
\begin{eqnarray*}
&&\mathbb{E}\left(\sup_{s\in[0,t\wedge\tau]}\|\nabla V_\varepsilon\|_{L^{2}}^{2p}\right)+\nu_1\mathbb{E}\int_{0}^{t\wedge\tau}\|\nabla V_\varepsilon\|_{L^2}^{2(p-1)}\|\triangle V_\varepsilon\|_{L^2}^2 ds\nonumber\\&& \leq e^{C(R,T,p)}\mathbb{E}\int_{0}^{t\wedge\tau}(1+\|\nabla u_\varepsilon\|_{L^2}^2)^{p}ds\leq C(R,T,p),
\end{eqnarray*}
where the constant $C$ is independent of $\varepsilon$. Letting $N\rightarrow \infty$, we get the desired result,
\begin{eqnarray*}
&&\sup_{\varepsilon\in(0,1]}\mathbb{E}\left(\sup_{s\in[0,t\wedge\tau_{R}]}\|\nabla V_\varepsilon\|_{L^{2}}^{2p}\right)+\sup_{\varepsilon\in(0,1]}\nu_1\mathbb{E}\int_{0}^{t\wedge\tau_{R}}\|\nabla V_\varepsilon\|_{L^2}^{2(p-1)}\|\triangle V_\varepsilon\|_{L^2}^2 ds\nonumber\\&&\leq C(R,T,p).
\end{eqnarray*}
This completes the proof of Lemma \ref{lem5.1}.
\end{proof}

We shall show that $V_\varepsilon$ converges to the solution of the system:
\begin{eqnarray}\label{equ5.11}
&&dV_0-\nu_1\triangle V_0dt=\gamma(V_0\times \triangle u_0+u_0\times \triangle V_0)dt-\nu_2V_0 dt\nonumber\\
&&\qquad\qquad\qquad\quad-\nu_2\mu(2(u_{0}\cdot V_0)u_0+|u_0|^2V_0)dt+u_0\times Gd\mathcal{W},
\end{eqnarray}
with the initial data $V_0(0)=0$. Before that, we give the well-posedness of solution to system (\ref{equ5.11}) by the following proposition.
\begin{proposition}\label{pro5.1} Let $(\Omega, \mathcal{F}, \mathbb{P})$ be a fixed probability space. Suppose that $u_0$ is a strong solution of system (\ref{equ1.1}) and the operator $G$ satisfies Condition (\ref{equ1.2}). Then, there exists a unique solution $V_0$ to system (\ref{equ5.11}) in the following sense: the process
\begin{eqnarray*}
V_0\in L^{\infty}(0,T;H^1)\cap L^2(0,T;H^2),~\mathbb{P}\mbox{-a.s.}
\end{eqnarray*}
and for $t\in [0,T]$, it holds that $\mathbb{P}$-a.s.
\begin{eqnarray*}
&&V_0(t)-\nu_1\int_{0}^{t}\triangle V_0ds=\gamma\int_{0}^{t}(V_0\times \triangle u_0+u_0\times \triangle V_0)ds\nonumber\\
&&\qquad-\nu_2\int_{0}^{t}(2\mu(u_{0}\cdot V_0)u_0+(1+\mu|u_0|^2)V_0)ds+\int_{0}^{t}u_0\times Gd\mathcal{W}.
\end{eqnarray*}
Moreover, the energy estimate holds
\begin{eqnarray}\label{equ5.12}
\mathbb{E}\left(\sup_{t\in[0,T]}\|\nabla V_0\|_{L^2}^2\right)+\mathbb{E}\int_0^T\|\triangle V_0\|_{L^2}^2dt\leq C.
\end{eqnarray}
\end{proposition}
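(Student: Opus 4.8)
The plan is to construct $V_0$ via a Galerkin scheme, exploiting that system (\ref{equ5.11}) is \emph{linear} in the unknown $V_0$ and that the noise $u_0\times G\,d\mathcal{W}$ is \emph{additive} (it does not multiply $V_0$, since $u_0$ is the given deterministic solution of (\ref{equ1.1})). Fix the orthonormal basis $\{w_j\}_{j\ge1}$ of $L^2(\mathcal{D})$ consisting of the Dirichlet eigenfunctions of $-\triangle$, let $P_n$ denote the orthogonal projection onto $\mathrm{span}\{w_1,\dots,w_n\}$, and let $V_0^n=\sum_{j=1}^n g_j^n(t)w_j$ solve the projected equation. At each level this is a linear stochastic differential equation $dg^n=A_n(t)g^n\,dt+b_n(t)\,d\mathcal{W}$ whose coefficients obey $\int_0^T(\|A_n(t)\|+\|b_n(t)\|^2)\,dt<\infty$ thanks to (\ref{equ5.3}) (which controls $u_0$ in $L^\infty(0,T;H^1)\cap L^2(0,T;H^2)$) and (\ref{equ2.9}); hence $V_0^n$ exists and is unique on $[0,T]$.

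Next I would establish the a priori estimate uniformly in $n$. Applying the It\^{o} formula to $\|\nabla V_0^n\|_{L^2}^2$ and integrating by parts produces the dissipative term $2\nu_1\|\triangle V_0^n\|_{L^2}^2\,dt$ together with, on the right-hand side, the cross terms coming from $\gamma(V_0^n\times\triangle u_0+u_0\times\triangle V_0^n)$, the lower-order contributions of $-\nu_2V_0^n$ and $-\nu_2\mu(2(u_0\cdot V_0^n)u_0+|u_0|^2V_0^n)$, the martingale $-2(\triangle V_0^n,u_0\times G)\,d\mathcal{W}$ and the It\^{o} correction $\|u_0\times G\|_{L_Q(\mathcal{H}_0;H^1)}^2\,dt$. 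By (\ref{equ2.3}) one has $(u_0\times\triangle V_0^n,\triangle V_0^n)=0$; the remaining cross term is bounded, using the one-dimensional interpolation (\ref{equ2.2}) and Young's inequality, by $\tfrac{\nu_1}{8}\|\triangle V_0^n\|_{L^2}^2+C\|\triangle u_0\|_{L^2}^2\|V_0^n\|_{H^1}^2$; the terms $(u_0\cdot V_0^n)u_0$ and $|u_0|^2V_0^n$, being linear in $V_0^n$, are bounded by $\tfrac{\nu_1}{8}\|\triangle V_0^n\|_{L^2}^2+C\|u_0\|_{H^1}^4\|V_0^n\|_{L^2}^2$; the term $-\nu_2V_0^n$ yields a favorable sign; and (\ref{equ2.9}) gives $\|u_0\times G\|_{L_Q(\mathcal{H}_0;H^1)}^2\le C\|u_0\|_{H^1}^2$. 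Absorbing the $\|\triangle V_0^n\|_{L^2}^2$ pieces on the left, treating the martingale with the Burkholder-Davis-Gundy inequality (\ref{equ2.7}) and (\ref{equ2.9}), and using the Poincar\'{e} inequality, one obtains
\[
\mathbb{E}\Big(\sup_{s\le t}\|\nabla V_0^n\|_{L^2}^2\Big)+\nu_1\mathbb{E}\int_0^t\|\triangle V_0^n\|_{L^2}^2\,ds\le C+\mathbb{E}\int_0^t\Gamma(s)\sup_{r\le s}\|\nabla V_0^n(r)\|_{L^2}^2\,ds,
\]
where $\Gamma(s)=C(1+\|\triangle u_0(s)\|_{L^2}^2+\|u_0(s)\|_{H^1}^4)$ satisfies $\int_0^T\Gamma(s)\,ds<\infty$ by (\ref{equ5.3}); Gronwall's lemma then yields (\ref{equ5.12}) with a bound independent of $n$, together with the analogous $W^{1,2}(0,T;L^2)$ bound on $V_0^n-\int_0^\cdot P_n(u_0\times G)\,d\mathcal{W}$ (cf. (\ref{equ3.3})).

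I would then pass to the limit. Using the $L^2(\Omega;L^2(0,T;H^2))$ bound, the $W^{1,2}(0,T;L^2)$ bound on $V_0^n-\int_0^\cdot P_n(u_0\times G)\,d\mathcal{W}$, and the $W^{\alpha,p}(0,T;L^2)$ bound on $\int_0^\cdot P_n(u_0\times G)\,d\mathcal{W}$ supplied by (\ref{equ2.8}), the Aubin-Lions lemma gives compactness in $L^2(0,T;H^1)$; extracting a subsequence converging weakly-$*$ in $L^2(\Omega;L^\infty(0,T;H^1))$, weakly in $L^2(\Omega;L^2(0,T;H^2))$ and strongly in $L^2(0,T;H^1)$ $\mathbb{P}$-a.s. (via the Skorokhod representation or, since the equation is linear with additive noise, by showing directly that the Galerkin sequence is Cauchy), one identifies the limit $V_0$ as a solution of (\ref{equ5.11}): each term is linear in $V_0$, so the deterministic terms pass to the limit by the strong/weak convergences together with (\ref{equ5.3}), and $\int_0^\cdot P_n(u_0\times G)\,d\mathcal{W}\to\int_0^\cdot u_0\times G\,d\mathcal{W}$ in $L^2(\Omega;\mathcal{C}([0,T];L^2))$. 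The bound (\ref{equ5.12}) for $V_0$ follows from the $n$-uniform estimate by weak lower semicontinuity of the norms. For uniqueness, if $V_0^{(1)},V_0^{(2)}$ are two solutions, their difference $W$ solves the \emph{homogeneous} linear equation (the additive noise cancels) with $W(0)=0$; applying the It\^{o} formula to $\|\nabla W\|_{L^2}^2$ and repeating the above estimates, now with no martingale term, gives $\tfrac{d}{dt}\|\nabla W\|_{L^2}^2+\nu_1\|\triangle W\|_{L^2}^2\le\Gamma(t)\|\nabla W\|_{L^2}^2$ pathwise, whence $W\equiv0$ by Gronwall. The main obstacle is the cross term $\gamma(V_0\times\triangle u_0,\triangle V_0)$: since $\triangle u_0$ lies only in $L^2(\mathcal{D})$ in space and is merely square-integrable in time, closing the a priori estimate hinges on the one-dimensional embedding $H^1(\mathcal{D})\hookrightarrow L^\infty(\mathcal{D})$ in (\ref{equ2.2}) and on $\|\triangle u_0\|_{L^2}^2\in L^1(0,T)$ from (\ref{equ5.3}), which is exactly what makes $\Gamma\in L^1(0,T)$ so that Gronwall applies; the remaining steps are routine, and the linearity together with the additivity of the noise lets the construction bypass the Yamada-Watanabe/martingale-solution machinery used for the nonlinear problem.
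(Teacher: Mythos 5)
Your proposal is correct, but it takes a somewhat different route from the paper. The paper's displayed argument introduces the auxiliary stochastic convolution $U$ solving $dU+\nu_1\triangle U\,dt=u_0\times G\,d\mathcal{W}$, $U(0)=0$, with the bound (\ref{equ5.13}), and then studies $V=V_0-U$, which satisfies the \emph{random but noise-free} PDE (\ref{equ5.14}); a pathwise energy estimate (using Lemma \ref{lem2.1} and (\ref{equ2.2})) plus the deterministic Gronwall lemma, applied for each fixed $\omega$, yields the $\mathbb{P}$-a.s.\ regularity $V_0\in L^\infty(0,T;H^1)\cap L^2(0,T;H^2)$; the moment bound (\ref{equ5.12}) is then only sketched, being obtained separately via the It\^{o} formula, stopping times and lower semicontinuity on Galerkin approximations, exactly in the spirit of Lemma \ref{lem3.1}. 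You instead work directly on the stochastic equation: Galerkin in the Dirichlet eigenbasis, It\^{o} formula for $\|\nabla V_0^n\|_{L^2}^2$, cancellation of $(u_0\times\triangle V_0^n,\triangle V_0^n)$ by (\ref{equ2.3}), control of $(V_0^n\times\triangle u_0,\triangle V_0^n)$ by (\ref{equ2.4})/(\ref{equ2.2}), the It\^{o} correction and the martingale term via (\ref{equ2.9}) and Burkholder-Davis-Gundy, and Gronwall with the deterministic weight $\Gamma\in L^1(0,T)$ supplied by (\ref{equ5.3}) (this last point is what legitimizes applying Gronwall after taking expectations, since $u_0$ is deterministic). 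This buys you the moment estimate (\ref{equ5.12}) and the a.s.\ regularity in a single pass, at the price of carrying the stochastic integral through the Galerkin limit, whereas the paper's decomposition isolates the noise in $U$ once and for all and reduces the existence part to a purely deterministic argument per path; your uniqueness argument (difference solves the homogeneous linear equation, pathwise Gronwall) matches what the paper omits and is sound at the paper's level of rigor.
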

\begin{proof}The proof of the existence of solution to the stochastic system (\ref{equ5.11}) may be achievable by the classical pathwise argument method. Actually, this argument is easier comparing with the original system (\ref{equ1.1}) driven by additive noise, due to the fact that there is no nonlinear term appearing on the right hand of this system. Here, we just give a brief proof of the well-posedness, the rigorous proof relies on the Galerkin approximation and the procedure of passing limit. Similar result can be found in \cite{Zabczyk} for the Navier-Stokes equation.

Considering the auxiliary process $U$ which is the solution of the system:
\begin{eqnarray*}
dU+\nu_1 \triangle Udt=u_0\times Gd\mathcal{W},
\end{eqnarray*}
with the initial data $U(0)=0$. We know that the solution $U$ is $H^1$-valued stationary process with continuous trajectories, see \cite{Zabczyk}. Moreover, there exists a constant $C$ such that
\begin{eqnarray}\label{equ5.13}
\mathbb{E}\left(\sup_{t\in [0,T]}\|U\|_{H^1}^2\right)+\nu_1\mathbb{E}\int_{0}^{T}\|\triangle U\|_{L^2}^2dt\leq C.
\end{eqnarray}
Let $V=V_0-U$, then $V$ satisfies the following equation,
\begin{eqnarray}\label{equ5.14}
&&dV-\nu_1\triangle V dt=\gamma(V\times \triangle u_0+U\times \triangle u_0+u_0\times \triangle V+u_0\times \triangle U)dt\nonumber\\
&&\qquad\qquad\qquad\quad-\nu_2(2\mu(u_{0}\cdot (V+U))u_0+(1+\mu|u_0|^2)(V+U))dt.
\end{eqnarray}
Taking the inner product with $-\triangle V$ on both sides of (\ref{equ5.14}), we have
 \begin{eqnarray*}
&&d\|\nabla V\|_{L^2}^2+2\nu_1\|\triangle V\|_{L^2}^2 dt\nonumber\\
&&=-2\gamma(V\times \triangle u_0+U\times \triangle u_0+u_0\times \triangle V+u_0\times \triangle U,\triangle V)dt\nonumber\\
&&\quad+\nu_2(2\mu(u_{0}\cdot V)u_0+2\mu(u_{0}\cdot U)u_0+(1+\mu|u_0|^2)(V+U),\triangle V)dt.
\end{eqnarray*}
Using Lemma \ref{lem2.1}(\ref{equ2.4}), we obtain
\begin{eqnarray}
&&|\gamma(V\times \triangle u_0+U\times \triangle u_0, \triangle V)|\nonumber\\
&&\leq \frac{1}{4}\|\triangle V\|_{L^2}^2+C\|\triangle u_0\|_{L^2}^2(\|V\|_{H^1}^2+\|U\|_{H^1}^2),\quad \label{equ5.15}\\
&&|\gamma(u_0\times \triangle U,\triangle V)\leq \frac{1}{4}\|\triangle V\|_{L^2}^2+C\|\triangle U\|_{L^2}^2\|u_0\|_{H^1}^2. \label{equ5.16}
\end{eqnarray}
By the H\"{o}lder equality and the interpolation inequality (\ref{equ2.2}),
\begin{eqnarray}\label{equ5.17}
&&|(\mu(u_{0}\cdot (V+U))u_0+\mu|u_0|^2(V+U),\triangle V)|\nonumber\\
&&\leq \frac{1}{4}\|\triangle V\|_{L^2}^2+C(\|V\|_{H^1}^2+\|U\|_{H^1}^2)\|u_0\|_{L^4}^4.
\end{eqnarray}
From above estimates (\ref{equ5.16})-(\ref{equ5.17}), we get
\begin{align*}
d\|\nabla V\|_{L^2}^2+\nu_1\|\triangle V\|_{L^2}^2 dt&\leq C\|\triangle U\|_{L^2}^2\|u_0\|_{H^1}^2+C(\|\triangle u_0\|_{L^2}^2+\|u_0\|_{L^4}^4)\|V\|_{H^1}^2\nonumber\\
&\quad+C\|U\|_{H^1}^2(\|u_0\|_{L^4}^4+\|\triangle u_0\|_{L^2}^2).
\end{align*}
Therefore, for every $w\in \Omega$, using energy estimates (\ref{equ5.13}) and (\ref{equ5.3}), the Gronwall Lemma yields
 \begin{eqnarray*}
\sup_{t\in [0,T]}\|\nabla V\|_{L^2}^2+\nu_1\int_{0}^{T}\|\triangle V\|_{L^2}^2 dt\leq C.
\end{eqnarray*}
Since $V=V_0-U$, we infer from the properties of $U$ that $\mathbb{P}$-a.s.
\begin{eqnarray*}
V_0\in L^\infty(0,T;H^1)\cap L^2(0,T;H^2).
\end{eqnarray*}
The proof of (\ref{equ5.12}) may be obtained by applying the It\^{o} formula, stopping time, and the estimates as in Lemma \ref{lem3.1}. Here we have to be aware of is that all the calculation should be performed on the Galerkin approximate solution, and then the energy inequality (\ref{equ5.12}) is a consequence of lower-continuity of norm.

Since the detail of proving the uniqueness is similar to the argument in Lemma \ref{lem3.1}, so we omit it. This completes the proof of Proposition \ref{pro5.1}.
\end{proof}

We now have all to give the result of central limit theorem.
\begin{proof}[\bf{Proof of Theorem 2.2}] The difference $V_\varepsilon-V_0$ satisfies
\begin{eqnarray*}
&&d(V_\varepsilon-V_0)-\nu_1\triangle (V_\varepsilon-V_0)dt\nonumber\\
&&=\gamma(V_\varepsilon\times \triangle u_\varepsilon+u_0\times \triangle V_\varepsilon-V_0\times \triangle u_0-u_0\times \triangle V_0)dt\nonumber\\
&&\quad-\nu_2(V_\varepsilon-V_0)dt-\varepsilon\nu_2\mu |V_\varepsilon|^{2}V_\varepsilon dt+(u_\varepsilon-u_0)\times Gd\mathcal{W}\nonumber\\
&&\quad-\nu_2\mu\big[(u_{0}\cdot V_\varepsilon)u_\varepsilon-(u_{0}\cdot V_0)u_0+(u_\varepsilon\cdot V_\varepsilon)u_\varepsilon-(u_0\cdot V_0)u_0\nonumber\\
&&\quad+(u_0\cdot u_\varepsilon)V_\varepsilon-(u_0\cdot u_0)V_0\big]dt.
\end{eqnarray*}
Using the It\^{o} formula to function $\|\nabla (V_\varepsilon-V_0)\|_{L^{2}}^{2}$, we have
\begin{eqnarray}\label{equ5.18}
&&d\|\nabla (V_\varepsilon-V_0)\|_{L^2}^2+2\nu_1\|\triangle (V_\varepsilon-V_0)\|_{L^2}^2dt\nonumber\\
&&=-2\gamma((V_\varepsilon-V_0)\times \triangle u_\varepsilon+V_0\times\triangle(u_\varepsilon-u_0)+u_0\times \triangle (V_\varepsilon-V_0),\triangle(V_\varepsilon-V_0))dt\nonumber\\
&&\quad-2\nu_2\|\nabla (V_\varepsilon-V_0)\|_{L^2}^2dt+2(\varepsilon\nu_2\mu |V_\varepsilon|^{2}V_\varepsilon,\triangle(V_\varepsilon-V_0))dt\nonumber\\
&&\quad+2((u_\varepsilon-u_0)\times G,\triangle(V_\varepsilon-V_0))d\mathcal{W}+\|(u_\varepsilon-u_0)\times G\|_{L_Q(\mathcal{H}_0;H^1)}^2dt\nonumber\\
&&\quad+2\nu_2\mu((u_{0}\cdot V_\varepsilon)u_\varepsilon-(u_{0}\cdot V_0)u_0,\triangle(V_\varepsilon-V_0))dt\nonumber\\
&&\quad+2\nu_2\mu((u_\varepsilon\cdot V_\varepsilon)u_\varepsilon-(u_0\cdot V_0)u_0,\triangle(V_\varepsilon-V_0))dt\nonumber\\
&&\quad+2\nu_2\mu((u_0\cdot u_\varepsilon)V_\varepsilon-(u_0\cdot u_0)V_0,\triangle(V_\varepsilon-V_0))dt\nonumber\\
&&=J_1+\cdots+J_{4}d\mathcal{W}+\cdots+J_8.
\end{eqnarray}
We next estimate the above terms in turn. By Lemma \ref{lem2.1}, we have
\begin{eqnarray}\label{equ5.19}
&&|((V_\varepsilon-V_0)\times \triangle u_\varepsilon, \triangle(V_\varepsilon-V_0))|\nonumber\\ &&\leq \frac{\nu_1}{8}\|\triangle(V_\varepsilon-V_0)\|_{L^2}^2+C\|\triangle u_\varepsilon\|_{L^2}^2\|V_\varepsilon-V_0\|_{H^1}^2,
\end{eqnarray}
and
\begin{eqnarray}\label{equ5.20}
(V_0\times\triangle(u_\varepsilon-u_0)+u_0\times \triangle (V_\varepsilon-V_0),\triangle(V_\varepsilon-V_0))=0.
\end{eqnarray}
By the H\"{o}lder inequality,
\begin{eqnarray}\label{equ5.21}
|(\varepsilon\nu_2\mu |V_\varepsilon|^{2}V_\varepsilon,\triangle(V_\varepsilon-V_0))|\leq \frac{\nu_1}{8}\|\triangle(V_\varepsilon-V_0)\|_{L^2}^2+C\varepsilon\|V_\varepsilon\|_{L^6}^3.
\end{eqnarray}
For terms $J_6, J_7$ and $J_8$, by the interpolation inequality (\ref{equ2.2}) and H\"{o}lder inequality, we get
\begin{eqnarray}\label{equ5.22}
&&|\nu_2\mu((u_{0}\cdot V_\varepsilon)u_\varepsilon-(u_{0}\cdot V_0)u_0,\triangle(V_\varepsilon-V_0))|\nonumber\\
&&=|\sqrt{\varepsilon}(u_0\cdot V_\varepsilon)V_\varepsilon+(u_0\cdot(V_\varepsilon-V_0))u_0,\triangle(V_\varepsilon-V_0)) |\nonumber\\
&&\leq \frac{\nu_1}{8}\|\triangle(V_\varepsilon-V_0)\|_{L^2}^2+C\varepsilon\|u_0\|_{H^1}^2\|V_\varepsilon\|_{L^4}^4+C\|u_0\|_{H^1}^4\|V_\varepsilon-V_0\|_{H^1}^2,
\end{eqnarray}
and
\begin{eqnarray}\label{equ5.23}
&&|((u_{0}\cdot V_\varepsilon)u_\varepsilon-(u_{0}\cdot V_0)u_0,\triangle(V_\varepsilon-V_0))|\nonumber\\
&&=|(\sqrt{\varepsilon}(u_\varepsilon\cdot V_\varepsilon)V_\varepsilon+\sqrt{\varepsilon}|V_\varepsilon|^2u_0+[u_0\cdot(V_\varepsilon-V_0)]u_0, \triangle(V_\varepsilon-V_0))|\nonumber\\
&&\leq\frac{\nu_1}{8}\|\triangle(V_\varepsilon-V_0)\|_{L^2}^2+C\varepsilon\|V_\varepsilon\|_{H^1}^2(\|u_{\varepsilon}\|_{L^2}^2+\|u_{0}\|_{L^2}^2)\nonumber\\
&&\quad+C\|u_0\|_{L^4}^4\|V_\varepsilon-V_0\|_{H^{1}}^2,
\end{eqnarray}
and
\begin{eqnarray}\label{equ5.24}
|J_8|\leq \frac{\nu_1}{4}\|\triangle(V_\varepsilon-V_0)\|_{L^2}^2+C\varepsilon\|u_0\|_{H^1}^2\|V_\varepsilon\|_{L^4}^4+C\|u_0\|_{H^1}^4\|V_\varepsilon-V_0\|_{H^1}^2.
\end{eqnarray}
Define the stopping time $\tau=\tau_{N}\wedge \tau_{R}$, where the stopping time $\tau_{R}$ is that one in Lemma \ref{lem5.1}, and
$\tau_{N}:=\inf\Big\{t>0;\sup_{s\in[0,t]}\|\nabla(V_\varepsilon-V_0)\|_{L^2}^{2}\geq N\Big\}.$
For the stochastic term, we have by the Burkholder-Davis-Gundy inequality once more
\begin{eqnarray}\label{equ5.25}
&&\mathbb{E}\left(\sup_{s\in[0,t\wedge \tau]}\left|\int_{0}^{s}((u_\varepsilon-u_0)\times G,\triangle(V_\varepsilon-V_0))d\mathcal{W}\right|\right)\nonumber\\
&&\leq C\mathbb{E}\left(\int_{0}^{t\wedge \tau}\|\nabla(V_\varepsilon-V_0)\|_{L^2}^{2}\|(u_\varepsilon-u_0)\times G\|_{L_Q(\mathcal{H}_0;H^1)}^2ds\right)^\frac{1}{2}\nonumber\\
&&\leq C\sqrt{\varepsilon} \mathbb{E}\left(\int_{0}^{t\wedge \tau}\|\nabla(V_\varepsilon-V_0)\|_{L^2}^{2}\|\nabla V_\varepsilon\|_{L^2}^2ds\right)^\frac{1}{2}\nonumber\\
&&\leq \frac{1}{4}\mathbb{E}\left(\sup_{s\in[0,t\wedge \tau]}\|\nabla(V_\varepsilon-V_0)\|_{L^2}^{2}\right)+\varepsilon C\mathbb{E}\int_{0}^{t\wedge \tau}\|\nabla V_\varepsilon\|_{L^2}^2ds.
\end{eqnarray}
Taking the supremum on interval $[0,t\wedge\tau]$, and then taking expectation on both sides of (\ref{equ5.18}), yields
\begin{eqnarray}\label{equ5.26}
&&\mathbb{E}\left(\sup_{s\in [0,t\wedge\tau]}\|\nabla (V_\varepsilon-V_0)\|_{L^2}^2\right)+2\nu_1\mathbb{E}\int_{0}^{t\wedge\tau}\|\triangle (V_\varepsilon-V_0)\|_{L^2}^2ds\nonumber\\
&&=-2\gamma\mathbb{E}\int_{0}^{t\wedge\tau}((V_\varepsilon-V_0)\times \triangle u_\varepsilon+V_0\times\triangle(u_\varepsilon-u_0),\triangle(V_\varepsilon-V_0))ds\nonumber\\
&&\quad-2\gamma\mathbb{E}\int_{0}^{t\wedge\tau}((u_0\times \triangle (V_\varepsilon-V_0),\triangle(V_\varepsilon-V_0))ds\nonumber\\
&&\quad-2\mathbb{E}\int_{0}^{t\wedge\tau}\mu\|\nabla (V_\varepsilon-V_0)\|_{L^2}^2-\varepsilon \nu_2\mu ( |V_\varepsilon|^{2}V_\varepsilon,\triangle(V_\varepsilon-V_0))ds\nonumber\\
&&\quad+2\mathbb{E}\left(\sup_{s\in[0,t\wedge\tau]}\int_{0}^{s}((u_\varepsilon-u_0)\times G,\triangle(V_\varepsilon-V_0))d\mathcal{W}\right)\nonumber\\
&&\quad+\mathbb{E}\int_{0}^{t\wedge\tau}\|(u_\varepsilon-u_0)\times G\|_{L_Q(\mathcal{H}_0;H^1)}^2ds\nonumber\\
&&\quad+2\nu_2\mu\mathbb{E}\int_{0}^{t\wedge\tau}((u_{0}\cdot V_\varepsilon)u_\varepsilon-(u_{0}\cdot V_0)u_0,\triangle(V_\varepsilon-V_0))ds\nonumber\\
&&\quad+2\nu_2\mu\mathbb{E}\int_{0}^{t\wedge\tau}((u_\varepsilon\cdot V_\varepsilon)u_\varepsilon-(u_0\cdot V_0)u_0,\triangle(V_\varepsilon-V_0))ds\nonumber\\
&&\quad+2\nu_2\mu\mathbb{E}\int_{0}^{t\wedge\tau}((u_0\cdot u_\varepsilon)V_\varepsilon-(u_0\cdot u_0)V_0,\triangle(V_\varepsilon-V_0))ds.
\end{eqnarray}
Taking into account of (\ref{equ5.19})-(\ref{equ5.26}), we obtain
\begin{eqnarray*}
&&\mathbb{E}\left(\sup_{s\in [0,t\wedge\tau]}\|\nabla (V_\varepsilon-V_0)\|_{L^2}^2\right)+\nu_1\mathbb{E}\int_{0}^{t\wedge\tau}\|\triangle (V_\varepsilon-V_0)\|_{L^2}^2ds\nonumber\\
&&\leq \mathbb{E}\int_{0}^{t\wedge\tau}(\|\triangle u_\varepsilon\|_{L^2}^2+\|u_0\|_{H^1}^4)\|V_\varepsilon-V_0\|_{H^1}^2ds\nonumber\\
&&+C\varepsilon\mathbb{E}\int_{0}^{t\wedge\tau}(1+\|u_0\|_{H^1}^2+\|u_{\varepsilon}\|_{L^2}^2)\|V_\varepsilon\|_{H^1}^4ds.
\end{eqnarray*}
By the Gronwall Lemma,
\begin{eqnarray}\label{equ5.27}
&&\mathbb{E}\left(\sup_{s\in [0,t\wedge\tau]}\|\nabla (V_\varepsilon-V_0)\|_{L^2}^2\right)+\nu_1\mathbb{E}\int_{0}^{t\wedge\tau}\|\triangle (V_\varepsilon-V_0)\|_{L^2}^2ds\nonumber\\
&&\leq Ce^{C(R,p,T)}\varepsilon\mathbb{E}\int_{0}^{t\wedge\tau}(1+\|u_0\|_{H^1}^2+\|u_{\varepsilon}\|_{L^2}^2)\|V_\varepsilon\|_{H^1}^4ds\nonumber\\
&&\leq \varepsilon C(R,p,T)e^{C(R,p,T)}.
\end{eqnarray}
For any fixed $R$, let $\varepsilon\rightarrow 0$ in (\ref{equ5.27}),
\begin{eqnarray*}
\lim_{\varepsilon\rightarrow 0}\left[\mathbb{E}\Big(\sup_{s\in [0,t\wedge\tau]}\|\nabla (V_\varepsilon-V_0)\|_{L^2}^2\Big)+\nu_1\mathbb{E}\int_{0}^{t\wedge\tau}\|\triangle (V_\varepsilon-V_0)\|_{L^2}^2ds\right]=0.
\end{eqnarray*}
Since $\tau_{R}\rightarrow T$ as $R\rightarrow \infty$, by the dominated convergence theorem, we have for any $t\in [0,T]$ as $R,N\rightarrow\infty$,
\begin{eqnarray*}
\lim_{\varepsilon\rightarrow 0}\left[\mathbb{E}\Big(\sup_{s\in [0,t]}\|\nabla (V_\varepsilon-V_0)\|_{L^2}^2\Big)+\nu_1\mathbb{E}\int_{0}^{t}\|\triangle (V_\varepsilon-V_0)\|_{L^2}^2ds\right]=0.
\end{eqnarray*}
This completes the proof of Theorem \ref{the2.2}.
\end{proof}

\section*{Acknowledgments}
Z.Qiu is supported by the CSC under grant No.201806160015. Y.Tang is supported by the National Natural Science Foundation of China under Grant No.11971188. H.Wang is supported by National Postdoctoral Program for Innovative Talents (No. BX201600020) and Project No. 2019CDXYST0015 supported by the Fundamental Research Funds for the Central Universities.

\bigskip

\end{document}